\makeatletter \@addtoreset{equation}{section} \makeatother
\theoremstyle{plain}
\theoremstyle{plain}
\newtheorem{theorem}{Theorem }[section]
\newtheorem{proposition}[theorem]{Proposition}
\newtheorem{lemma}[theorem]{Lemma}
\newtheorem{corollary}[theorem]{Corollary}
\theoremstyle{definition} \theoremstyle{remark}
\newtheorem{remark}[theorem]{Remark}
\newtheorem{definition}[theorem]{Definition}
\DeclareMathAlphabet{\mathpzc}{OT1}{pzc}{m}{it}
\newcommand{\field}[1]{\mathbb{#1}}
\newcommand{\RR}{\field{R}}
\newcommand{\CC}{\textbf{C}}
\newcommand{\ZZ}{\field{Z}}
\newcommand{\NN}{\field{N}}
\newcommand{\diam}{\operatorname{diam}}
\newcommand{\inn}{\textnormal{in}}
\newcommand{\out}{\textnormal{out}}
\newcommand{\loc}{\textnormal{loc}}
\newcommand{\Int}{\textnormal{int}}
\newcommand{\Het}{\textnormal{Het}}
\newcommand{\Hom}{\textnormal{Hom}}
\newcommand{\sign}{\textnormal{sgn}}
\newcommand{\Max}{\textnormal{Max}}
\newcommand{\tr}{\textnormal{tr}}
\numberwithin{equation}{section}
\begin{document}
\large

\title[Boundary crisis for degenerate singular cycles]{Boundary crisis for degenerate singular cycles}

\author[A. Lohse]{Alexander Lohse}
\author[A. Rodrigues]{Alexandre Rodrigues}
\address[A. Lohse and A. Rodrigues]{Departamento de Matem\'atica, Universidade do Porto, Rua do Campo Alegre, 687, 4169-007 Porto, Portugal}
\address[A. Lohse]{Fachbereich Mathematik, Universit\"at Hamburg, Bundesstra{\ss}e 55, 20146 Hamburg, Germany\bigbreak}
\vspace{1cm}
\email{alexander.lohse@math.uni-hamburg.de}
\email{alexandre.rodrigues@fc.up.pt}

\date{\today}

\begin{abstract}
The term boundary crisis refers to the destruction or creation of a chaotic attractor when parameters vary.
The locus of a boundary crisis may contain regions of positive Lebesgue measure marking the transition from regular dynamics to the chaotic regime. This article investigates the dynamics occurring near a heteroclinic cycle involving a hyperbolic equilibrium point $E$ and a hyperbolic periodic solution $P$, such that the connection from $E$ to $P$ is of codimension one and the connection from $P$ to $E$ occurs at a quadratic tangency (also of codimension one). We study these cycles as organizing centers of two-parameter bifurcation scenarios and, depending on properties of the transition maps, we find different types of shift dynamics that appear near the cycle. Breaking one or both of the connections we further explore the bifurcation diagrams previously begun by other authors. In particular, we identify the region of crisis near the cycle, by giving information on multipulse homoclinic solutions to $E$ and $P$ as well as multipulse heteroclinic tangencies from $P$ to $E$, and bifurcating periodic solutions, giving partial answers to the problems (Q1)--(Q3) of E. Knobloch (2008), \emph{Spatially localised structures in dissipative systems: open problems}, Nonlinearity, 21, 45--60. Throughout our analysis, we focus on the case where $E$ has real eigenvalues and $P$ has positive Floquet multipliers.
\end{abstract}

\maketitle

{\small\noindent\emph{MSC 2010:} 34C37, 37C29, 37G35, 37D45\\
\emph{Keywords:} Singular cycle, Shift dynamics, Multipulse-homoclinics, Heteroclinic tangencies, Boundary crisis.}
\vspace{0.5cm}

\section{Introduction}\label{intro}
\subsection{The object of study}
This paper presents new results in the bifurcation theory of vector fields by exploring a three-dimensional heteroclinic structure called \emph{singular cycle} started in Morales and Pac\'ifico \cite{MP}. In general, a singular cycle is a finite set of hyperbolic non-trivial periodic solutions and at least one hyperbolic equilibrium, which are linked in a cyclic way by trajectories in the intersections of their respective stable and unstable manifolds. In the present work, we investigate a singular cycle between a unique equilibrium $E$ and a unique periodic solution $P$. We assume that the stable manifolds of both $E$ and $P$ are two-dimensional. Therefore, generically, the heteroclinic connection from $E$ to $P$ is of codimension one, while that from $P$ to $E$ is of codimension zero, as an intersection of two-dimensional manifolds in three-dimensional space. Such a codimension one cycle is referred to as an \emph{$EP1$-cycle} by Champneys \emph{et al.}\ \cite{Champneys2009}. We focus our attention on the case where the stable manifold of $E$ intersects the unstable manifold of $P$ tangentially, turning the cycle into a codimension two phenomenon, called \emph{$EP1t$-cycle} in \cite{Champneys2009}.

\subsection{The results}
Concentrating on the case where the linearization at the equilibrium $E$ has only real eigenvalues and $P$ has positive Floquet multipliers, we set up a compilation of local and global maps as an approximation for the dynamics in a standard way. 
 Note that while our arguments are formulated for the first return map of a model system, the dynamical results can be carried over to a neighbourhood of the singular cycle, where  $C^2$-conjugacy of both return maps holds.

 Our first main result is the characterization of the dynamics in the organizing center, Theorem \ref{main-thm}: depending on the sign of the parameters 
that determine the transitions between $E$ and $P$ (\emph{i.e.}\ inward or outward fold of the unstable manifold of $P$, inclination flip or not) we prove the existence of different types of shift dynamics near the cycle. 
This complements the results of \cite{MP} by addressing the case where the cycle is not necessarily isolated. 
In Theorem \ref{thm-hyp} we also partially answer the question of hyperbolicity for the non-wandering set associated to the singular cycle, when restricted to a compact set not containing the cycle. 

A boundary crisis is a mechanism for destroying a chaotic set when the parameters vary. The locus of boundary crises can contain regions of positive Lebesgue measure that mark the beginning of the chaotic regime; in general this mechanism is related with quasi-stochastic attractors and wild phenomena. In our two-parameter setting, these global bifurcations are associated with curves of homo- and heteroclinic tangencies of invariant saddles, and the complete understanding of the phenomenon is far from being done. 

In the present paper, we give a generic description of the boundary crisis for the cases with shift dynamics in the organizing center. In Theorems \ref{thm-E-hom} and \ref{thm-P-hom} we study the presence of multipulse homoclinic solutions, \emph{i.e}.\ trajectories that are bi-asymptotic to $E$ or $P$ and pass more than once around the heteroclinic cycle (or its remnants). They accumulate on the curves of one-pulse homoclinic solutions described in \cite{Champneys2009, MP}. Theorem \ref{thm-PE-het} contains a corresponding result for multipulse heteroclinic connections from $P$ to $E$. The homo- and heteroclinic tangencies we encounter are related to the work of Hittmeyer \emph{et al.}\ \cite[Section 2.3]{Hittmeyer}, who study 
the transition between different types of wild chaos. 

Our results add information to the bifurcation diagrams in \cite{Champneys2009, MP}, and bring them another step closer to completion. Furthermore, the results in the present paper give partial answers the open problems \textbf{(Q1)}--\textbf{(Q3)} stated by Knobloch \cite{Knobloch_open}, about the structure of the \emph{homoclinic snaking} (continuation of the homoclinic cycles to $E$ near the original cycle). 

\subsection{State of art}
This type of degenerate singular cycle arises in several applications: for instance, Champneys \emph{et al.}\ \cite{Champneys2007} study excitable systems of reaction-diffusion equations that are used to model various biophysical phenomena. In this regard, $EP1t$-cycles occur as one of various possible codimension two mechanisms for the interaction of Hopf and homoclinic bifurcation curves, corresponding to the onset of different types of waves. Apart from that, $EP1t$-cycles come up in the context of semiconductor lasers with optical reinjection, as noted by Krauskopf and Oldeman in \cite{KrauskopfOldeman} and studied further in \cite{Champneys2009, KRies}. We take a general bifurcation theoretical standpoint and are interested in how these cycles act as \emph{organizing centers} for homoclinic bifurcations associated to the equilibrium $E$ and the periodic solution $P$.

The unfolding of $EP1$-cycles has previously been explored by many authors, such as ~Bam\'on \emph{et al.}\ \cite{BLMP}, Labarca and San Mart\'in \cite{LS}, Pac\'ifico and Rovella \cite{PR} or San Mart\'in \cite{S}, for instance. Studies of the codimension two $EP1t$-cycles include \cite{Champneys2009, MP}. The second reference focuses on isolated cycles and is primarily concerned with the prevalence of hyperbolicity near an $EP1t$-cycle: they show that for generic families of vector fields passing through a vector field with an $EP1t$-cycle, the set of parameters corresponding to hyperbolic flows has full Lebesgue measure (assuming the cycle is contracting). In doing so, they extend a corresponding result for $EP1$-cycles in \cite{LS}. Champneys and coworkers \cite{Champneys2009} give a detailed geometric analysis of the dynamics near an $EP1t$-cycle, they investigate the existence of one-pulse $E$-homoclinics and $P$-homoclinic tangencies as well as periodic solutions close to the heteroclinic cycle. Moreover, in the presence of non-real eigenvalues, they study homoclinic snaking behaviour and give numerical examples of corresponding systems that arise in applications. A mode of intracellular calcium dynamics has also been studied. It is our aim to contribute to extend these studies.

In recent years the continuation of homoclinic orbits unfolding heteroclinic cycles received a great deal of attention. Motivated by the Swift-Hohenberg equation of subcritical type, a rigorous analysis in the conservative and reversible context including the study near the periodic solution has been established in Beck \emph{et al.}\ \cite{Beck}. 
A discussion of this phenomenon without any additional structure has been performed in Knobloch \emph{et al.}\ \cite{KRV2011}. In \cite{KRV2011}, using the functional analysis Lin's method, the authors prove rigorously some results of \cite{Champneys2009} and study numerically an explicit example. Moreover, we also refer to Rademacher \cite{Rad_PhD} where the author studied the dynamics near singular cycles.

The \emph{Homoclinic Snaking Problem} refers to the snaking continuation curve (in the bifurcation diagram) of homoclinic cycles near an $EP1$-cycle.  This topic has been explored in Knobloch and Rie\ss\ \cite{KnobRiez2010}, where the authors apply Lin's method to a differential equation unfolding a saddle-node Hopf bifurcation with global reinjection introduced in \cite{KrauskopfOldeman}.  Detection and continuation of homoclinic cycles circulating several times around a cycle (later called multipulses) have been studied by  Knobloch \emph{et al.} \cite{Knob_JDDE_2011}. In the latter article, the cycle considered is different from ours in two respects: firstly, the  equilibrium is a saddle-focus and secondly, the authors explicitly use reversibility.  Many problems remain to be solved as we point out at the end of the present work.

\subsection{Framework of the paper}
Our work is structured as follows. In Section \ref{setting} we introduce the general setting in which $EP1t$-cycles arise. Section \ref{local} sets up the local maps around $E$ and $P$, as well as the transition maps between them, using mainly terminology from \cite{Champneys2009}. In Sections \ref{localdyn_E} and \ref{localdyn_P} we characterize the dynamics near $E$ and $P$, leading to the statement and proof of our result on the dynamics of the organizing center in Section \ref{organizing_center}. Section \ref{sec-multipulses} is concerned with multipulse homoclinics. In Section \ref{chaos-dynamics} we give an overview of the (chaotic) dynamics and bifurcations for the different cases we encounter. Section \ref{conclusion} concludes. Throughout this paper, we have endeavoured to make a self contained exposition bringing together all topics related to the proofs. We have stated short lemmas and we have drawn illustrative figures to make the paper easily readable.

\section{The setting}\label{setting}
Let $M$ be a compact and boundaryless three-dimensional manifold and let $\mathcal{X}^\infty(M)$ the Banach space of $C^\infty$ vector fields on $M$ endowed with the $C^\infty$ Whitney topology.

Our object of study is the dynamics around a special type of heteroclinic cycle, an $EP1t$-cycle, which occurs in the Swift-Hohenberg equation, and for which we give a rigorous description here. Specifically we consider  a two-parameter family of vector fields in $\mathcal{X}^\infty(M)$,  with a flow given by the unique solution $x(t)=\varphi(t,x) \in M$ of
\begin{equation}
\label{general}
\dot{x}=f(x, \alpha, \beta), \qquad x(0)=x_0\in M
\end{equation}
where $\alpha$ and $\beta$ are real parameters, and the organizing center $\alpha=\beta=0$ satisfies the following hypotheses:
\medbreak
\begin{enumerate}
\item[\textbf{(H1)}] $E$ is a hyperbolic equilibrium where the eigenvalues of $\mathrm{D}f(x, 0,0)|_{x=E}$ are $\mu,-\lambda_1,-\lambda_2 \in \RR$ and satisfy $-\lambda_2<-\lambda_1<0< \mu$ and $\lambda_1<\mu$.
\medbreak
\item[\textbf{(H2)}] $P$ is a hyperbolic periodic solution of minimal period $T>0$ with positive Floquet multipliers $0<\exp(-\eta_c)<1<\exp(\eta_e)$ for $\eta_e, \eta_c>0$. In addition,  $(-\eta_c, \eta_e ) \in \mathcal{D}$ where  $\mathcal{D}\subset \RR^2$ is the residual set given in \cite{Takens71}.
\medbreak
\item[\textbf{(H3)}] There is a heteroclinic connection $W^u(P) \cap W^s(E)$, given through a quadratic tangency, that we denote by $[P \rightarrow E]_t$.
\medbreak
\item[\textbf{(H4)}] There is a heteroclinic connection $ W^u(E) \cap W^s(P)$ that we denote by $[E \rightarrow P]$.
\end{enumerate}
\medbreak
The equilibrium $E$ is what is widely known as a \emph{Lorenz-like singularity}. By \textbf{(H2)}, points in the set $\mathcal{D}\subset \RR^2$ satisfy a suitable Sternberg $k$-condition, thus excluding resonances, which are phenomena of zero Lebesgue measure. Since we restrict to a residual set of Floquet multipliers, we can say that the assumptions of our results hold generically. More details are given later in Remarks \ref{REM1} and \ref{REM2}.
\medbreak

In \cite{MP}, the $[P \rightarrow E]_t$ connection of \textbf{(H3)} is called $\gamma_0$ and the $[E \rightarrow P]$ connection of \textbf{(H4)} is called $\gamma_1$.  The flow of the organizing center has a singular cycle associated to $E$ and $P$, that we denote by $\Gamma_{(0,0)}$ or simply $\Gamma$ -- see Figure \ref{EP1-cycle}. Its occurrence is a phenomenon of codimension two because of properties \textbf{(H3)} and \textbf{(H4)}. We also assume that: 
\begin{enumerate}
\item[\textbf{(H5)}] The two parameters $(\alpha, \beta)$ satisfy:
\begin{itemize}
\item[\textbf{(a)}] $\alpha$  and $\beta$ act independently;
\item[\textbf{(b)}] $\alpha$ unfolds the codimension one heteroclinic connection from $E$ to $P$;
\item[\textbf{(c)}] $\beta$ unfolds the codimension one tangent heteroclinic connection from $P$ to $E$.
\end{itemize}
\end{enumerate}
\medbreak
In particular, the codimension one connections from $E$ to $P$ occur on the $\beta$-axis, given by $\alpha=0$, and the codimension one tangent heteroclinic connections from $P$ to $E$ occur on the $\alpha$-axis, given by $\beta=0$. We use the notation $f_0(x)=f(x,0,0)\in \mathcal{X}^\infty(M)$ to refer to the vector field of the organizing center.

\begin{figure}
\begin{center}
\includegraphics[height=3cm]{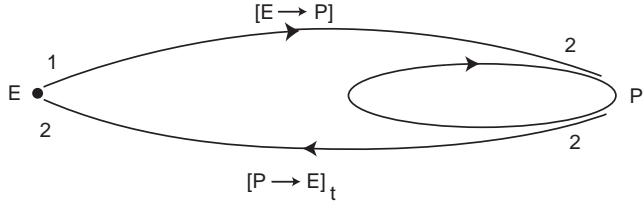}
\end{center}
\caption{\small Schematic representation of an $EP1t$-cycle: the hyperbolic equilibrium $E$ has a one-dimensional unstable manifold and a two-dimensional stable manifold; the two invariant manifolds of the periodic solution $P$ have dimension 2. The unstable manifold of $P$ meets the stable manifold of $E$ tangentially.}
\label{EP1-cycle}
\end{figure}
\medbreak

\begin{definition}
A heteroclinic cycle $\Gamma$ is said to be \emph{isolated} if there is an open set $\mathcal{U}\subset M$ such that $\bigcap_{t \in \RR} \varphi(t,\mathcal{U})=\Gamma$. The set $\mathcal{U}$ is called an \emph{isolating block}.
\end{definition}
 
The study in \cite{MP} focuses exclusively on isolated $EP1t$-cycles, while in \cite{Champneys2009}, the authors consider a greater variety of cases arising through different signs of the occuring parameters. We largely follow the notation in \cite{Champneys2009}. For the most important parameters the corresponding quantities in \cite{Champneys2009} and \cite{MP} are listed in Table \ref{notation} for ease of reference.
\medbreak
In the following, we construct a toy model of the dynamics near such a degenerated heteroclinic cycle in terms of Poincar\'e maps between neighbourhoods of $E$ and $P$, where the flow may be $C^2$-linearized. Then we study geometric properties of these maps and analyse the resulting algebraic bifurcation equations. Appropriate composition of linear and global maps gives the desired first return maps.

\section{Local and Transition Maps}
\label{local}
We study the dynamics near the cycle by deriving local and transition maps that approximate the dynamics close to and between the two saddles in the cycle. Here we establish notation that we use throughout the paper, and the expressions for the local and transition maps. In our analysis we distinguish between local and global invariant manifolds of the saddles: when looking at a cross section, we denote by $W^s_\loc(.)$ the set of points converging directly to the respective saddle, while those in $W^s(.)$ make at least another turn around the other saddle before doing so. Analogously we use $W^u_\loc(.)$ and $W^u(.)$ for the local and global unstable manifolds. 

\subsection{Local map near $E$}
\label{linearization E}
The condition of $C^1$-linearization considered in \cite{Champneys2009} is not adequate for the problem under consideration. For instance, in $C^1$-coordinates, it is impossible to define the type of a tangency (quadratic) that will be considered later; one can only speak about a topological type of the tangency: one-sided or topological intersection.  Therefore, we need to explore a normal form when studying the saddle equilibrium state. 
\medbreak

Denoting by $V_E$ a small cubic neighbourhood of $E$ of size $1$ as illustrated in Figure \ref{local-coord_E} (a), the generalization of Bruno's Theorem \cite[Appendix A]{Shilnikov et al} says that we may choose local cartesian $C^{2}$ coordinates $(x,y,z)$ within $V_E$, so that locally the system (\ref{general}) casts as follows:
\begin{equation}
\label{nf1}
\left\{ 
\begin{array}{l}
\dot x = -\lambda_1 x + \tilde{f}_{11}(x,y,z)x + \tilde{f}_{12}(x,y,z)y, \\
\dot y =  - \lambda_2 y + \tilde{f}_{21}(x,y,z)x + \tilde{f}_{22}(x,y,z)y, \\
\dot z = \mu z + \tilde{g}(x,y,z)z
\end{array}
\right.
\end{equation}
where $\tilde{f}_{11},\tilde{f}_{12},\tilde{f}_{21},\tilde{f}_{22}$ and $\tilde{g}$ are at least $C^{2}$-smooth with respect to $(x,y,z)$ and the following identities are valid for $i,j \in \{1,2$\}:
$$
\tilde{f}_{ij}(0,0,0)=\tilde{f}_{1i}(x,y,0)=\tilde{f}_{j1}(0,0,z)=\tilde{g}(0,0,0)=\tilde{g}(x,y,0)=\tilde{g}(0,0,z)=0.
$$
In
this case, as proved in \cite{Shilnikov et al}, the linear map and the high order terms depend smoothly on the parameters $(\alpha, \beta)$. 

\medbreak
\medbreak

Without loss of generality, we set that the connection $[E \to P]$ leaves $E$ in positive $z$-direction, and the connection $[P \to E]_t$ reaches $E$ along the $x$-axis. After a linear rescaling of the variables, we consider two local sections $\Sigma_E^\inn$ and $\Sigma_E^\out$ across the $x$- and $z$-axis, respectively, such that $x=1$ in $\Sigma_E^\inn$ and $z=1$ in $\Sigma_E^\out$. Depending on the sign of the $z$-component in $\Sigma^{\inn}_E$ we define
\begin{equation*}
 \Sigma^{\inn(+)}_E=\{(y,z) \mid z>0 \}, \quad \Sigma^{\inn(-)}_E=\{(y,z) \mid z<0 \}.
\end{equation*}
Note that $W^s_{\loc}(E)$ corresponds to the plane $z=0$. Up to high order terms the flow $\varphi(t,.)$ for the normal form  (\ref{nf1}) induces a map that sends the $(y,z)$ coordinates in $\Sigma_E^{\inn(+)}$ to the $(x,y)$ coordinates in $\Sigma_E^\out$:
\begin{equation}
\Pi_E:\Sigma^{\inn(+)}_E \rightarrow \Sigma^{\out}_E, \quad \Pi_E(y,z)=\left(z^{\delta_1} + h.o.t., yz^{\delta_2}+ h.o.t. \right) \cong (x,y),
\label{local_E}
\end{equation}
where  $\delta_1=\frac{\lambda_1}{\mu}<1$ and $\delta_2=\frac{\lambda_2}{\mu}$. The \emph{higher order terms (h.o.t.)} in (\ref{local_E}) have been derived in Appendix B of \cite{Shilnikov et al}, by considering integral formulae for solutions obtained using variation of constants. See also \cite[Subsection 3.2]{HS}.
\medbreak
 Observe that $\delta_1<\delta_2$ by \textbf{(H1)}, and if $y \in \RR$ is fixed, then $\lim_{z\rightarrow 0} \Pi_E(y,z)= (0,0)$. By construction, it follows that solutions starting
\begin{enumerate}
\item in $\Sigma^{\inn}_E$ enter $V_E$ in positive time -- those in $\Sigma^{\inn (+)}_E$ then exit through $\Sigma^{\out}_E$ towards $P$, while those in $\Sigma^{\inn(-)}_E$ do not follow the cycle -- see Remark \ref{remark-a21};
\item in $\Sigma^{\out}_E$ exit $V_E$ in positive time.
\end{enumerate}

 \begin{figure}
\begin{center}
\includegraphics[height=8cm]{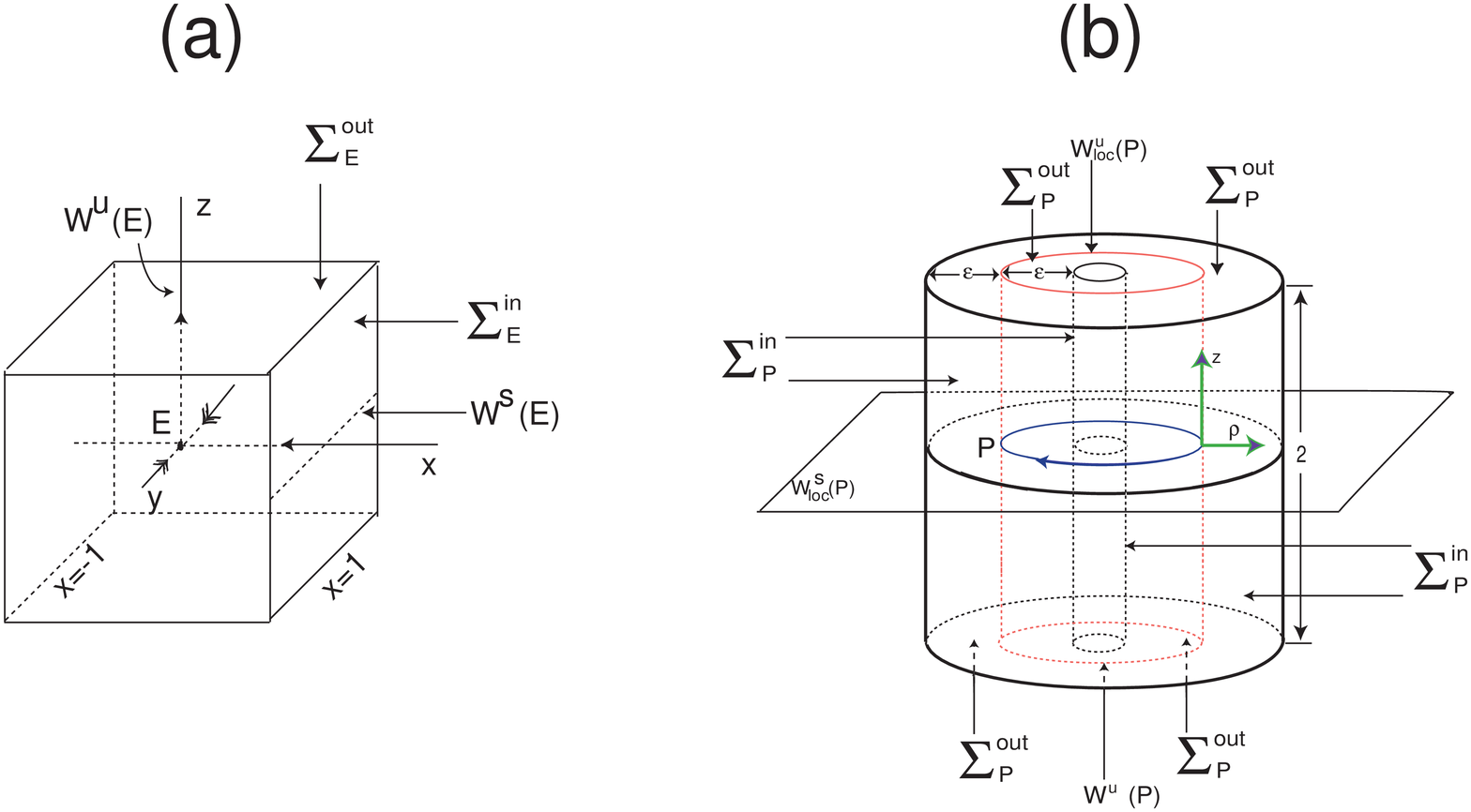}
\end{center}
\caption{\small Local coordinates near the equilibrium $E$ (a) and the periodic solution $P$ (b). For $X\in\{E,P\}$, the flow enters the neighbourhood $V_X$ transversely across the wall $\Sigma_X^{\inn}\backslash W^s_{\loc}(X)$ and leaves it transversely across the top $\Sigma_X^{\out}$ and bottom.}
\label{local-coord_E}
\end{figure}

Denote by $E^{cu}$ the subspace associated to the eigenvalues $\mu>0$ and $-\lambda_1<0$ at $E$. Besides the two-dimensional stable and one-dimensional unstable manifolds of $E$, there exists a centre-unstable manifold of $E$, denoted by $W^{cu}_{loc}(E)$, that is tangent to the subspace $E^{cu}$.  It is clear that $[E \rightarrow P] \subset W^{cu}(E) \cap W^s(P)$.  Although the tangent space $E^{cu}$ at $E$ is unique, the associated center-unstable manifold is not. More details in \cite{Homburg}.

\subsection{Local map near $P$}
\label{PSolution}
Without loss of generality, let us assume that $T=2\pi$.
Let $C$ be a cross section transverse to the flow at $p\in P$. Since $P$ is hyperbolic, there is a neighbourhood $V\subset C$ of $p$ in $C$ where the first return map to $C$ is $C^1$-conjugate to its linear part. In our case, \textbf{(H2)} guarantees that this conjugacy is even $C^2$-smooth, since $f_0 \in \mathcal{X}^\infty(M)$ and $(-\eta_c, \eta_e ) \in \mathcal{D}$ where $\mathcal{D}\subset \RR^2$ is the residual set given in \cite{Takens71}. The arguments and computations do not depend on the specific choice of $p \in P$ and have been explicitly done in  \cite[Appendix]{LR2016}.

\begin{remark}
\label{REM1}
The main result of \cite{Takens71} is proved by using fiber contraction arguments on spaces of $k$-jets ($k\in \NN$) to find coordinates in which the action of the derivative of the diffeomorphism has high-order contact and is in a certain standard form. Based on this information and motivated by \cite{McSwiggen}, we conjecture that the condition about the $C^\infty$ regularity of the initial vector field can be relaxed (as well as the topology).
\end{remark}

\begin{remark}
\label{REM2}
The assumptions on the eigenvalues of the first return map are a variant of those of Sternberg: for any pair $\nu_1, \nu_2$ of non-negative integers satisfying $$2 \leq  \nu_1+\nu_2 \leq \alpha,$$  it is required that the linear combination $\nu_1 \log \lambda_e +\nu_2 \log \lambda_c$ has non-zero real part different from the real part of $\log \lambda_j$, for  $j \in \{e,c\}$. Here, $\alpha$ is an integer that depends on the degree of smoothness required -- details in \cite{LR2016, Takens71}. 
\end{remark}

  Suspending the linear map gives rise, in quasi-cylindrical coordinates $(\psi, \rho, z)$ around $P$, to the system of differential equations:
\begin{equation*}
\left\{ 
\begin{array}{l}
\dot{\psi}= 1\\
\dot{\rho}=-\eta_c\rho  \\ 
\dot{z}=\eta_e z
\end{array}
\right.
\end{equation*}
which is $C^2$-orbitally equivalent to the original flow near $P$. In these coordinates, the periodic solution $P$ is the circle defined by $\rho=0$ and $z=0$, 
its local stable manifold, $W^s_{\loc}(P)$, is  the plane $z=0$ and $W^u_{\loc}(P)$ is the surface defined by $\rho=0$.
After a rescaling of variables if necessary, we work with a hollow three-dimensional cylindrical neighbourhood $V_P$ of $P$ contained in the suspension of $V$:
$$
V_P=\left\{ (\psi, \rho, z):\quad \psi\in\RR\pmod{2\pi}, \quad -1\le\rho\le 1,
\quad -1\le z\le 1 \right\}\  .
$$
Its boundary is a disjoint union of
\begin{itemize}
\item[(i)] the walls of two cylinders ($\rho=\pm 1$), locally separated by $W^u_{\loc}(P)$, and 
\item[(ii)] two annuli, the top and the bottom of the cylinder ($z=\pm 1$),  locally separated by $W^s_{\loc}(P)$.
\end{itemize}
Analogously to the cross sections near $E$ we label the relevant parts of the boundary $\Sigma^{\inn}_P$ (where $\rho=1$) and $\Sigma^{\out}_P$ (where $z=1$). Again we split $\Sigma^\inn_P$ into $\Sigma^{\inn (+)}_P$ and $\Sigma^{\inn (-)}_P$, depending on the sign of $z$. The cylinder wall $\Sigma^{\inn}_P$ is parametrized by the covering map
$$ (\psi,z)\mapsto(\psi,1,z)=(\psi,\rho,z), $$
where $\psi\in \RR\pmod{2\pi}$ and $|z|<1$. 
In these coordinates, $\Sigma^{\inn}_P\cap W^s_{\loc}(P)$ is a circle in the plane where $z=0$. The annulus $\Sigma^{\out}_P$ is parametrized by the covering
$$ (\psi, \rho) \mapsto (\psi, \rho, 1)=(\psi,\rho,z), $$
where $\psi \in \RR\pmod{2\pi}$ and $|\rho|<1$. Then $\Sigma^{\out}_P\cap W^u_{\loc}(P)$ is the circle $\rho=0$. It follows directly by construction that solutions starting
\begin{enumerate}
\item in $\Sigma^{\inn}_P$ enter the hollow cylinder $V_P$ in positive time -- those in $\Sigma_P^{\inn(+)}$ \emph{ie} with $z>0$ exit through $\Sigma_P^{\out}$ towards $E$, while those in $\Sigma_P^{\inn(-)}$ (with $z>0$) do not follow the cycle;
\item in $\Sigma^{\out}_P$ leave the hollow cylinder $V_P$ in positive time.
\end{enumerate} 
The local map near $P$ sends $(\psi,z)$ coordinates in $\Sigma_P^\inn$ to $(\psi, \rho)$ coordinates in $\Sigma_P^\out$. It is then given by
\begin{equation*}
\Pi_{P}: \Sigma^{\inn (+)}_P \to \Sigma^{\out}_P, \quad \Pi _{P}(\psi,z)= \left(\psi-\frac{1}{\eta_e}\ln{z} \mod{2\pi}, {z}^{\delta_P}\right) \cong (\psi, \rho),
\end{equation*}
where $\delta_P=\frac{\eta_c}{\eta_e}>0$ is called the \emph{saddle index} of $P$. A corresponding map can be constructed from $\Sigma^{\inn (-)}_P$ to the annulus where $z=-1$, but we are only interested in following the heteroclinic cycle $\Gamma$, which continues in positive $z$ direction. 

\begin{figure}
\begin{center}
\includegraphics[height=6cm]{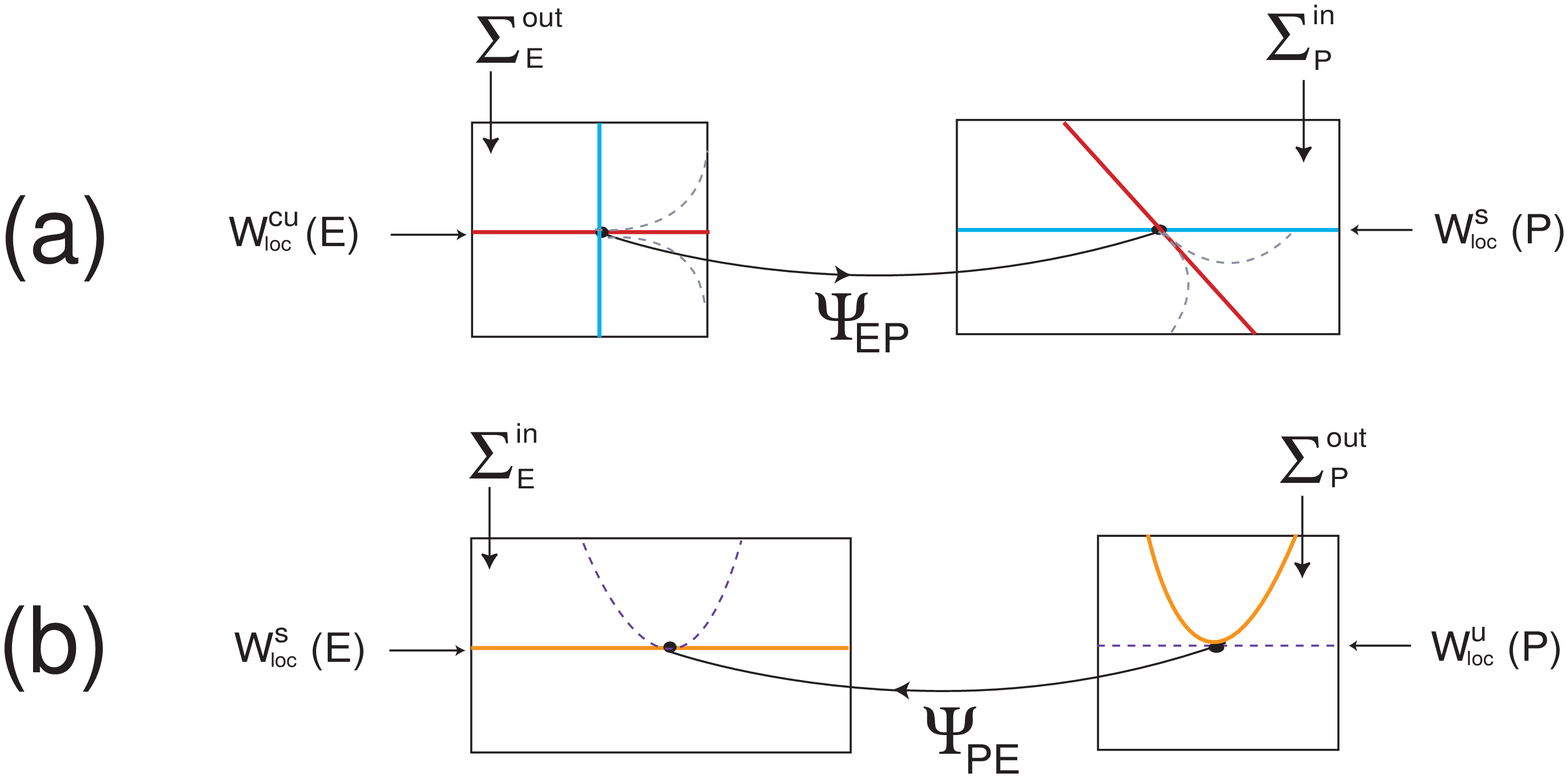}
\end{center}
\caption{\small Scheme of the transition maps for a given choice of parameters ($a_{21}<0$ and $k>0$ and $b_{22}<0$) considered in \cite{MP}. (a) Relative position of $\Psi_{EP}(W^{cu}_\loc(E) \cap \Sigma_E^{\out})$ and $W^s_{\loc} (P)\cap \Sigma_P^\inn$. (b) Orientation of the parabola $\Psi_{PE}(W^u_\loc(P)\cap \Sigma_P^{\out})$ in $\Sigma_E^\inn$ and the orientation-reversibility of $\Psi_{PE}$.}
\label{transitions2}
\end{figure}

\subsection{Transition maps} Retaining just the lowest order terms, the transition map $\Psi_{EP}: \Sigma_E^\out \to \Sigma_P^\inn$ from $E$ to $P$ may be written as
\begin{equation*}
\Psi_{EP} (x,y)=
 \left( 
  \begin{array}{cc}
a_{11} & a_{12}\\
a_{21} & a_{22}\\
\end{array} 
\right)
 \left( 
  \begin{array}{c}
x\\
y\\
\end{array} 
 \right)
+
\alpha 
 \left( 
  \begin{array}{c}
\xi_1\\
1 \\
\end{array} 
\right) \cong (\psi, z),
\end{equation*}
where $\xi_1 \in \RR$, $\det
 \left( 
  \begin{array}{cc}
a_{11} & a_{12}\\
a_{21} & a_{22}\\
\end{array}
\right)
 \neq 0$ and $a_{21} \neq 0$. When $\alpha=0$, then $\Psi_{EP} (0,0)=(0,0)$; this is consistent with the fact that when $\alpha=0$, the connection $[E \rightarrow P]$ persists.  We also assume without loss of generality that $a_{11}>0$ and consider the two cases $a_{21} \gtrless 0$.
  The constant $a_{21}\neq 0$ has a geometric meaning: according to its sign, it indicates the relative position of $\Psi_{EP}(W^{cu}_\loc(E)\cap \Sigma_E^{\out})$ and $W^s_{\loc} (P)\cap \Sigma_P^\inn$. It follows directly that: 

\begin{remark}
\label{remark-a21}
If $a_{21}<0$, then the cycle $\Gamma$ is isolated, because points in $\Sigma_P^\inn$ with $z<0$ do not follow the cycle. This is the case investigated in \cite{MP}. If $a_{21}>0$, other invariant sets near the cycle may occur as we shall see.
\end{remark}
\bigbreak
As before, up to high order terms, the transition map $\Psi_{PE}: \Sigma_P^\out \to \Sigma_E^\inn$ from $P$ to $E$ can be written as
 \begin{equation*}
\Psi_{PE} (\psi,\rho)=
 \left( 
  \begin{array}{cc}
b_{11} & b_{12}\\
0 & b_{22}\\
\end{array} 
\right)
 \left( 
  \begin{array}{c}
\psi\\
\rho\\
\end{array} 
 \right)
+
 \left( 
  \begin{array}{c}
0\\
k\psi^2 \\
\end{array} 
 \right)
+
\beta
 \left( 
  \begin{array}{c}
\xi_2\\
1+ \nu \psi \\
\end{array} 
\right)
\cong (y, z),
\end{equation*}
where $b_{22} \neq 0$ and $\xi_2 \in \RR,\ \nu \in \RR^+$. The manifold $W^u_{\loc}(P)\cap \Sigma_P^{\out}$ is given by $\rho=0$. As stated in \textbf{(H3)}, the manifolds $W^u_{\loc}(P)\cap \Sigma_E^{\inn}$ and $W^s(E)\cap \Sigma_E^{\inn}$ intersect quadratically for $\beta=0$. In fact, the set $W^u_{\loc}(P)\cap \Sigma_P^{\out}$ is mapped under $\Psi_{PE}$ into the of parabola given by:
$$ y=b_{11}\psi \quad \text{and} \quad z=k\psi^2. $$
\medbreak
The tangency is a phenomenon of codimension 1. For constant sign of $\beta$, the parameter $k$ denotes where the transverse heteroclinic cycle occurs and $\beta$ unfolds the tangency in this way: 
\begin{enumerate}
\item[(i)]for $\beta=0$, there is a tangency between $W^u_{\loc}(P)$ and $W^s_{\loc}(E)$;
\item[(ii)] for $k<0<\beta$ or $\beta<0<k$, the tangency evolves into two transverse connections from $P$ to $E$;
\item[(iii)] for $k,\beta<0$ or $k,\beta>0$, there is no connection from $P$ to $E$.
\end{enumerate}
\bigbreak
The case $k>0$  corresponds to the inward case of \cite{MP}; in terms of the bifurcation diagram in $(\alpha, \beta)$, the case $k<0$ is obtained by reflection along the $\alpha$-axis. We look at $b_{22} \gtrless 0$ and $k \gtrless 0$. Summing up we get the following set of cross sections and local/global maps that approximate the dynamics in a neighbourhood of the cycle:
\bigbreak
\hspace*{4.5cm}
\begindc{\commdiag}
\obj(0,0){$(x,y) \in \Sigma_E^{out}$}
\obj(40,0){$(\psi,z) \in \Sigma_P^{in}$}
\obj(40,-20){$(\psi,\rho) \in \Sigma_P^{out}$}
\obj(0,-20){$(y,z) \in \Sigma_E^{in}$}
\mor{$(x,y) \in \Sigma_E^{out}$}{$(\psi,z) \in \Sigma_P^{in}$}{$\Psi_{EP}$}[\atright,\aplicationarrow]
\mor{$(\psi,z) \in \Sigma_P^{in}$}{$(\psi,\rho) \in \Sigma_P^{out}$}{$\Pi_{P}$}[\atright,\aplicationarrow]
\mor{$(\psi,\rho) \in \Sigma_P^{out}$}{$(y,z) \in \Sigma_E^{in}$}{$\Psi_{PE}$}[\atright,\aplicationarrow]
\mor{$(y,z) \in \Sigma_E^{in}$}{$(x,y) \in \Sigma_E^{out}$}{$\Pi_{E}$}[\atright,\aplicationarrow]
\enddc
\bigbreak

For $(\alpha, \beta)=(0,0)$, without loss of generality, we choose the local coordinates such that the connections pass through each cross section in $(0,0)$. In particular, this means that, up to multiples of $2\pi$, the angular component of the two points where the heteroclinic connections meet $\Sigma_P^{\inn}$ and $\Sigma_P^{\out}$ is $0$. See Figure \ref{transitions2} for an illustration of $\Psi_{EP}$ and $\Psi_{PE}$.
\medbreak
Points with negative $z$-component in $\Sigma_P^\inn$ and $\Sigma_E^\inn$ do not follow the cycle and are therefore disregarded in the following considerations. Eight cases of interest arise in this construction, given through $a_{21} \gtrless 0$, $b_{22} \gtrless 0$ and $k \gtrless 0$. The meaning of the constants $a_{21}$, $k$ and $b_{22}$ is indicated in Table \ref{notation}. Note that the authors of \cite{MP} assume $a_{21}<0$. We are mainly interested in $a_{21}>0$. For the sake of simplicity we often treat $\Psi_{EP}$ as a rotation of coordinates by $\pi/2$, an assumption that does not affect the validity of the arguments.

\begin{table}[ht]
\begin{center}
\begin{tabular}{cccl}
Parameter & In \cite{Champneys2009} & In \cite{MP} & Function (according to their sign)  \\ \hline
$a_{21}$ & $c$ & $\partial_{z_0}B(0,0,X)$ & \small{Relative position of $\Psi_{EP}(W^{cu}_\loc(E)\cap \Sigma_E^{\out})$ and $W^s_{\loc} (P)\cap \Sigma_P^\inn$} \\ \hline
$k$ & $k$ & $\partial_{yy}\phi(0,1,X)$ & \small{Orientation of the parabola $\Psi_{PE}(W^u_\loc(P) \cap \Sigma_P^{\out})$ in $\Sigma_E^\inn$:} \\
& & & \small{inward ($k>0$) or outward ($k<0$)}\\ \hline
$b_{22}$ & $s$ & $\partial_yA(0,1,X)$ & \small{Orientation of $\Psi_{PE}$:} \\
& & & \small{inclination flip ($b_{22}<0$) or not ($b_{22}>0$)} \\ \hline
\hline
\end{tabular}
\end{center}
\caption{Parameters $a_{21}$, $k$ and $b_{22}$: corresponding notation in \cite{Champneys2009} and \cite{MP} and their role.}
\label{notation}
\end{table}

\section{Local Dynamics near the equilibrium $E$}\label{localdyn_E}
In this section we use the coordinates and notation introduced in the previous section to study the local geometry near the equilibrium $E$. In particular, we state two results that determine how curves with certain properties change their shape when mapped from $\Sigma_E^\inn$ to $\Sigma_E^\out$.
\begin{definition}
\label{def_segmento} A \emph{ segment }$\gamma$ in  $\Sigma_E^{\inn}$ is a smooth regular parametrized curve $\gamma :[0,1]\rightarrow \Sigma_E^{\inn}$ such that:
\begin{itemize}
 \item[(i)] it meets $W^{s}_{\loc}(E)$ transversely at the point $\gamma (0)$ only;
 \item[(ii)] its velocity does not vanish, \emph{i.e.}\ $\|\gamma'(t)\| \neq0$ for all $t \in[0,1]$;
 \item[(iii)] writing $\gamma (s)=(y(s),z(s))$, both $y$ and $z$ are monotonic functions of $s$.
\end{itemize}
\end{definition}
Analogously, adapting the coordinates, we define a segment in  $\Sigma_P^{\inn}$. Note that if $\gamma $ is a segment in  $\Sigma_E^{\inn}$ (or $\Sigma_P^{\inn}$), then $\gamma'(0)$ is not collinear with $(1,0)$.
It is easy to see that $\Pi_E(0,z)=(z^{\delta_1},0)$. By an abuse of terminology, from now on, let us denote by $E^{cu}$ the center-unstable eigenspace at $E$ projected onto $\Sigma_E^\out$. The next result allows us to characterize of the local dynamics near $E$.
 
\begin{lemma}\label{lemma-segment}
Let $\gamma $ be a segment in $\Sigma_E^{\inn}$, parametrized by $t \in (0,1]$. Then $\Pi_E(\gamma(t))$ is a curve in $\Sigma_E^{\out}$ such that $\lim\limits_{t\rightarrow 0^+} \frac{\Pi_E'(\gamma(t))}{\|\Pi_E'(\gamma(t))\|} \in E^{cu}$.
\end{lemma}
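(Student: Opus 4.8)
The plan is to reduce the statement to a one-dimensional limit for the slope of the image curve and to show that this slope becomes horizontal, i.e.\ that the limiting unit tangent is collinear with $(1,0)$ — which is precisely $E^{cu}$ in $\Sigma_E^{\out}$, since $\Pi_E(0,z)=(z^{\delta_1},0)$ lies on the $x$-axis. Writing $\gamma(t)=(y(t),z(t))$, condition (i) of Definition \ref{def_segmento} gives $z(0)=0$ with $z'(0)\neq0$, and monotonicity (iii) together with $\gamma((0,1])\subset\Sigma_E^{\inn(+)}$ forces $z$ to be increasing; hence $z(t)>0$ and $z'(t)>0$ for all sufficiently small $t>0$, while $y$, $y'$ stay bounded and $z'$ stays bounded away from $0$ near $t=0$. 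Setting $\Pi_E(\gamma(t))=(p(t),q(t))$, where $p$ is the $x$-component, it then suffices to prove $\lim_{t\to0^+} q'(t)/p'(t)=0$, because this forces $(p',q')/\|(p',q')\|\to(\pm1,0)\in E^{cu}$.

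First I would carry out the computation keeping only the principal parts of $\Pi_E$. By the chain rule,
\[
p'=\delta_1\,z^{\delta_1-1}z', \qquad q'=z^{\delta_2}y'+\delta_2\,y\,z^{\delta_2-1}z',
\]
so that
\[
\frac{q'}{p'}=\frac{z^{\,\delta_2-\delta_1+1}}{\delta_1}\cdot\frac{y'}{z'}+\frac{\delta_2}{\delta_1}\,y\,z^{\,\delta_2-\delta_1}.
\]
Because $\delta_1<\delta_2$ by \textbf{(H1)}, both exponents $\delta_2-\delta_1$ and $\delta_2-\delta_1+1$ are positive, while $y$ and $y'/z'$ remain bounded; hence each summand tends to $0$ as $z\to0^+$, i.e.\ as $t\to0^+$. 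This already settles the claim for the model map and identifies the limiting direction as the $x$-axis.

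The main obstacle — and the only genuinely delicate point — is to verify that the higher order terms do not alter this conclusion. For this I would invoke the asymptotic estimates for $\Pi_E$ and its first derivatives established in Appendix B of \cite{Shilnikov et al}: writing $\Pi_E(y,z)=(z^{\delta_1}+R_1(y,z),\,y z^{\delta_2}+R_2(y,z))$, the first partial derivatives of $R_1$ are $o(z^{\delta_1-1})$ and those of $R_2$ are $O(z^{\delta_2-1})$ as $z\to0^+$, so that each remainder is dominated in $C^1$ by the corresponding principal monomial. Substituting into the chain-rule expressions then gives $p'=\delta_1 z^{\delta_1-1}z'\,(1+o(1))$ — in particular $p'\neq0$ for all small $t>0$, so the quotient is well defined — together with $q'=O(z^{\delta_2-1})$. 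Hence $q'/p'=O(z^{\delta_2-\delta_1})\to0$, exactly as in the model computation.

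Finally I would record that since $\|(p',q')\|\geq|p'|\to+\infty$ the normalization is harmless, and that the limiting unit vector is collinear with $(1,0)$, which by the identification of $E^{cu}$ with the projection of the center-unstable eigenspace onto $\Sigma_E^{\out}$ completes the proof. The role of each hypothesis is transparent in this scheme: transversality (i) secures $z'(0)\neq0$ and hence the boundedness of $y'/z'$, monotonicity (iii) guarantees $z(t)>0$ with definite sign of $z'$ near $t=0$, and the eigenvalue inequality $\lambda_1<\lambda_2$ from \textbf{(H1)} provides the decisive gap $\delta_2>\delta_1$ that drives the ratio to zero.
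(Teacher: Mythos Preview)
Your proof is correct and follows essentially the same approach as the paper: write $\gamma(t)=(y(t),z(t))$, compute the derivative of $\Pi_E\circ\gamma$ via the chain rule using the principal part $(z^{\delta_1},yz^{\delta_2})$, and show the slope $q'/p'\to 0$ because $\delta_2>\delta_1$. The paper's argument is slightly terser---it simply asserts that the higher order terms are irrelevant---whereas you take the extra care to invoke the $C^1$ estimates from \cite{Shilnikov et al} to bound the remainders; this is a welcome addition but not a different method.
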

The lemma may be generalized for any curve (not necessarily a segment) that does not lie within $W^s_{\loc}(E)$. The high order terms appearing in (\ref{local_E}) are not relevant in the local geometry near $E$.

\begin{proof}
For $t\in [0,1]$, let $\gamma(t)=(y(t), z(t)) \in \Sigma_E^{\inn}$, where $z(0)=0$ and $z'(0) \neq 0$. Clearly, $\lim_{t\rightarrow 0^+} \Pi_E(\gamma(t))=(0,0)$, since $\gamma(0)=(y(0),0) \in W^s_\loc(E)$. Up to high order terms, by equation (\ref{local_E}) we have $$\Pi_E(\gamma(t))= (z(t)^{\delta_1}, y(t)z(t)^{\delta_2}),$$ which implies
$$
\Pi_E'(\gamma(t))= \left( \delta_1z(t)^{\delta_1-1}z'(t), \ y'(t)z(t)^{\delta_2}+\delta_2y(t)z(t)^{\delta_2-1}z'(t) \right) ,
$$
and thus
$$
\lim_{t \rightarrow 0}\frac{y'(t)z(t)^{\delta_2}+\delta_2y(t)z(t)^{\delta_2-1}z'(t)}{\delta_1z(t)^{\delta_1-1}z'(t)}=\lim_{t \rightarrow 0}\frac{y'(t)z(t)^{\delta_2-\delta_1+1}+\delta_2y(t)z(t)^{\delta_2-\delta_1}z'(t)}{\delta_1z'(t)}=0,
$$ 
because $\delta_2>\delta_1$. Therefore, in the limit $t \to 0^+$, the normalized tangent vector $\frac{\Pi_E'(\gamma(t))}{\|\Pi_E'(\gamma(t))\|}$ points in the direction of the $x$-axis in $\Sigma_E^\out$ and the result follows.
\end{proof}

Let $a, b\in \RR$ such that $b-a <2$. Let $\mathcal{P}_\beta^-$ be a curve in $\Sigma_E^{\inn}$ parametrized by the graph of a quadratic function $h_\beta:[a,b] \rightarrow \RR$, $\beta \in \RR$, such that:
\begin{itemize}
\item $\forall t \in (a,b),$ $h_\beta''(t)<0$;
\item for $\beta>0$, $h_\beta$ has two zeros, say $\omega_1\leq0\leq\omega_2$ and for $\beta<0$, $h_\beta$ does not have zeros and is strictly negative;
\item $h_0\left(y^M_0\right)=0$.
\end{itemize}
Let $M_\beta$ be the unique global maximum of the curve $h_\beta$ attained at $y^M_\beta \in (a,b)$. Analogously, let $\mathcal{P}_\beta^+$ be a curve on $\Sigma_E^{\inn}$ parametrized by the graph of a quadratic function $h_\beta:[a,b] \rightarrow \RR$ such that:
\begin{itemize}
\item $\forall t \in (a,b),$ $h_\beta''(t)>0$;
\item for $\beta<0$, $h_\beta$ has two zeros, say $\omega_1\leq0\leq\omega_2$ and for $\beta>0$, $h_\beta$ does not have zeros and is strictly positive;
\item $h_0\left(y^m_0\right)=0$.
\end{itemize}
Let $m_\beta$ be the unique global minimum of the curve $h_\beta$ attained at $y^m_\beta \in (a,b)$. For the sake of simplicity, assume that $y^M_\beta=0$ and $y^m_\beta=0$. In what follows we need a definition of \emph{cusp} and \emph{cuspidal region}. In the literature (see Castro and Lohse \cite{CastroLohse2015} or Munkres \cite{Munkres}, for instance), we may find slightly varying definitions of these geometrical sets, but for our purposes the following is adequate.

\begin{definition} 
\label{cusp_def}
Let $H \subset M$ be a surface, $p\in H$ and $\ell\subset H$ a line containing $p$. Let $\gamma_1$ and $\gamma_2$ be two $C^1$ curves in $H$, parametrized by $t\in \RR$ such that:
\begin{itemize}
\item $\gamma_1(0)=\gamma_2(0)=p$;
\item $\gamma_1$ and $\gamma_2$ are tangent to $\ell$ only at $p$;
\item each of the traces of $\gamma_{1}$ and $\gamma_{2}$,  for $t\neq 0$, is contained in one semi-plane defined by $\ell$.
\end{itemize}
Each non-convex and asymptotically small region in $H$ bounded by the two curves near $p$ is a \emph{cusp} tangent to $\ell$ and centered at $p$ and its boundary is called a \emph{cuspidal curve} tangent to $\ell$ and centered at $p$. 
\end{definition}
{Let $\mathcal{C}$ a cuspidal curve tangent to $\ell$ and centered at $p$. In a given neighbourhood of $p$, the set $\mathcal{C}\backslash \{p\}$ may either connected or disconnected. }

\begin{definition}
 Let $H \subset M$  be a surface and $\gamma=(\gamma_1,\gamma_2)$ a $C^1$ curve in $H$ parametrized by $t\in \RR$ and let $\gamma(0)=p\in H$. We say that $\gamma$ has a fold point at $p$ if $\gamma_1'(0)=0$, $\gamma_2'(0)\neq 0$ and $\gamma_1'(t)$ changes sign at $t=0$.
\end{definition}

All the cusps and cuspidal regions of the next result will be centered at the point $\Sigma_E^{\out}\cap W^u_{\loc}(E) = \{(0,0)\}$. If $A\subset M$, let $\overline{A}$ denote the topological closure of $A$. 
\begin{proposition}
\label{dyn_E_proposition}
The following statements hold.
\begin{enumerate}
\item If $\beta\leq 0$, then $\Pi_E\left(\mathcal{P}_\beta^-(t)  \right) \cap \Sigma_E^{\out(+)}=\emptyset$.
\item If $\beta>0$, then $\overline{\Pi_E(\mathcal{P}_\beta^-(t))}\cap \Sigma_E^{\out(+)}$ is a connected cuspidal curve tangent to $E^{cu}$ with a fold point at $\left(h_\beta(0)^{\delta_1},0\right)=\left(M_\beta^{\delta_1},0\right)$.
\item If $\beta\leq 0$, then $\overline{\Pi_E(\mathcal{P}_\beta^+(t))}\cap \Sigma_E^{\out(+)}$ is a disconnected cuspidal curve tangent to $E^{cu}$.
\item If $\beta> 0$, then $\overline{\Pi_E(\mathcal{P}_\beta^+(t))}\cap \Sigma_E^{\out(+)} $ is a curve with a fold point at $\left(h_\beta(0)^{\delta_1},0\right)=\left(m_\beta^{\delta_1},0\right)$. The derivative of this curve at the fold point is well defined.
  
\end{enumerate}
\end{proposition}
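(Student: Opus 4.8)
The plan is to treat the four items through the explicit form of the local map. Dropping the higher order terms in (\ref{local_E})—legitimate by the remark following Lemma \ref{lemma-segment}—and writing each curve as the graph $y\mapsto(y,h_\beta(y))$, the image under $\Pi_E$ of the part lying in the domain $\Sigma_E^{\inn(+)}=\{z>0\}$ is
\begin{equation*}
\Pi_E\bigl(y,h_\beta(y)\bigr)=\left(h_\beta(y)^{\delta_1},\ y\,h_\beta(y)^{\delta_2}\right),\qquad\text{for }y\text{ with }h_\beta(y)>0 ,
\end{equation*}
and since $h_\beta(y)^{\delta_1}>0$ there, this image lies in $\Sigma_E^{\out(+)}$. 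First I record, for each case, the set $\{\,y\in[a,b]:h_\beta(y)>0\,\}$ on which the curve is defined: it is empty for $\mathcal{P}_\beta^-$ with $\beta\le0$ (which yields item (1) at once), a single interval $(\omega_1,\omega_2)$ for $\mathcal{P}_\beta^-$ with $\beta>0$, a disjoint union of two intervals for $\mathcal{P}_\beta^+$ with $\beta\le0$, and all of $[a,b]$ for $\mathcal{P}_\beta^+$ with $\beta>0$.

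Next I analyse the two local features that organize the image, namely the accumulation on the origin and the fold at the vertex. At a simple zero $\omega_i$ of $h_\beta$ the curve crosses $W^s_{\loc}(E)$ transversely and is therefore locally a segment in the sense of Definition \ref{def_segmento}; both components of $\Pi_E$ tend to $0$ and, by Lemma \ref{lemma-segment}, the image accumulates on $\Sigma_E^{\out}\cap W^u_{\loc}(E)=\{(0,0)\}$ tangentially to $E^{cu}$. (For the double zero occurring in $\mathcal{P}_0^+$ the same tangency follows from the generalization of Lemma \ref{lemma-segment} to curves not contained in $W^s_{\loc}(E)$.) At the vertex $y=0$, using $h_\beta'(0)=0$, I compute
\begin{equation*}
\left.\frac{\dd}{\dd y}\right|_{y=0}\!h_\beta(y)^{\delta_1}=0,\qquad \left.\frac{\dd}{\dd y}\right|_{y=0}\!\bigl(y\,h_\beta(y)^{\delta_2}\bigr)=h_\beta(0)^{\delta_2}\neq0 ,
\end{equation*}
so the first component has a critical point while the second moves with nonzero speed: this is a fold point at $\bigl(h_\beta(0)^{\delta_1},0\bigr)$, equal to $(M_\beta^{\delta_1},0)$ for $\mathcal{P}_\beta^-$ and to $(m_\beta^{\delta_1},0)$ for $\mathcal{P}_\beta^+$. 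The second derivative of the first component at $0$ has the sign of $h_\beta''(0)$ and is nonzero, so the fold is nondegenerate (in particular the first component changes monotonicity there); together with $h_\beta(0)^{\delta_2}\neq0$ this gives the well-defined (vertical) tangent asserted in item (4).

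Assembling these pieces gives the remaining conclusions. For item (2) the single interval $(\omega_1,\omega_2)$ traces one connected arc running from the origin, through the fold at $(M_\beta^{\delta_1},0)$, back to the origin; since $\omega_1<0<\omega_2$ the two ends approach the origin from opposite sides of $E^{cu}$—the second coordinate $y\,h_\beta(y)^{\delta_2}$ having the sign of $y$—and both tangentially to $E^{cu}$, so the closure is a connected cuspidal curve. For item (3) the two intervals give two such arcs, each ending at the origin tangentially to $E^{cu}$ and on opposite sides of it but not joined by a fold, which is a disconnected cuspidal curve. For item (4), $h_\beta>0$ throughout $[a,b]$, so the curve never reaches the origin; there is then no cusp, only the nondegenerate fold described above.

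I expect the main obstacle to be the rigorous matching of this analytic picture with Definition \ref{cusp_def}: one must verify that the two branches reaching the origin genuinely bound a non-convex, asymptotically pinched region, rather than being merely two curves sharing the tangent line $E^{cu}$. Near a simple zero the expansion $h_\beta(y)\sim h_\beta'(\omega_i)(y-\omega_i)$ gives $h_\beta(y)^{\delta_1}\sim|y-\omega_i|^{\delta_1}$ and $y\,h_\beta(y)^{\delta_2}\sim\omega_i\,|y-\omega_i|^{\delta_2}$, so each branch satisfies a power law $Y\sim C\,X^{\delta_2/\delta_1}$ with exponent $\delta_2/\delta_1>1$ and with coefficients of opposite sign on the two sides; checking that this common super-linear order of contact yields a genuine cusp is the delicate point, and it is precisely here that the inequality $\delta_1<\delta_2$ from \textbf{(H1)} enters.
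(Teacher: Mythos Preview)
Your proof is correct and follows essentially the same line as the paper's: both establish item (1) by emptiness of the domain, derive the cuspidal tangency in items (2) and (3) from Lemma \ref{lemma-segment} applied to the segments on either side of each zero of $h_\beta$, obtain connectedness in (2) from the connectedness of $\{h_\beta>0\}$, and identify the fold in (2) and (4) with the vertex of the parabola. Your treatment is in fact more explicit than the paper's (which is quite terse): you actually compute the derivatives at the vertex to verify the fold condition, and your closing paragraph on the power-law asymptotics $Y\sim C\,X^{\delta_2/\delta_1}$ near a simple zero spells out why the branches form a genuine cusp in the sense of Definition \ref{cusp_def}---a point the paper leaves entirely to Lemma \ref{lemma-segment} and Figure \ref{Prop5.7}.
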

\begin{figure}[ht]
\begin{center}
\includegraphics[width=\textwidth]{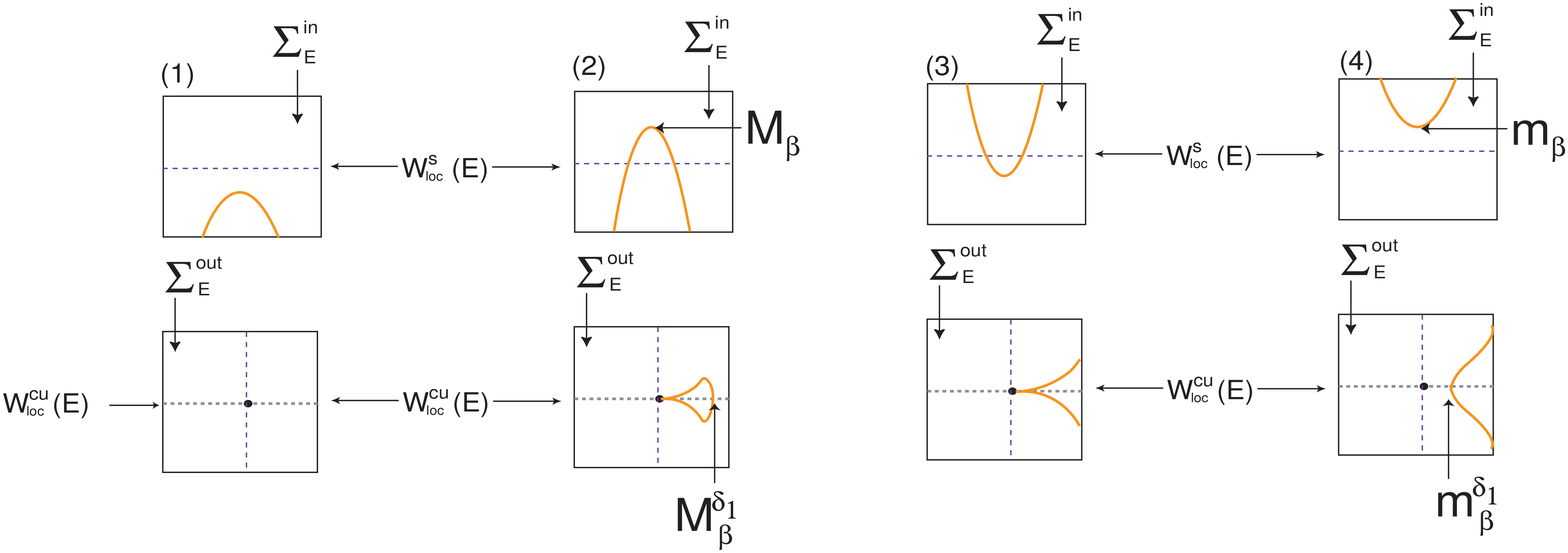}
\end{center}
\caption{\small Illustration of Proposition \ref{dyn_E_proposition}: different images under $\Pi_E$ of a curve of the type $\mathcal{P}_\beta^\pm$.}
\label{Prop5.7}
\end{figure}

\begin{proof}
We suggest that the read follows the following proof observing Figure \ref{Prop5.7}. 
The first statement is clear because $h_\beta(t)\leq0$ for all $t \in [a,b]$, but $z>0$ for all $(y,z) \in \Sigma_E^{\inn(+)}$, which is the domain of $\Pi_E$. The second and third follow by applying Lemma \ref{lemma-segment}, since the part of $\mathcal{P}^\pm$ in $\Sigma_E^{\inn(+)}$ consists of two segments. Indeed,
$$\lim_{t \rightarrow \omega_1^+} \frac{\Pi_E(\mathcal{P}^-_\beta(t))}{\partial t}= \lim_{t \rightarrow \omega_2^-} \frac{\Pi_E(\mathcal{P}^-_\beta(t))}{\partial t} \in E^{cu}.$$
 Note that in (2) the curve is connected because $\{h_\beta(t)>0 \mid t \in [a,b] \}$ is connected and $\Pi_E:\Sigma_E^{\inn(+)} \rightarrow \Sigma_E^{\out(+)}$ is smooth. In (4), $\mathcal{P}^+$ does not intersect $W_\loc^s(E)$, so $\overline{\Pi_E(\mathcal{P}_\beta^+(t))}$ is bounded away from $(0,0) \in \Sigma_E^\out$. Item (4) follows easily that a fold point occurs where $h_\beta$ attains its minimum.
\end{proof}

\section{Local Dynamics near the periodic solution $P$}\label{localdyn_P}

In this section we study the dynamics near $P$. In particular, we characterize geometrically how certain sets are transformed by the local map $\Pi_P$. To this end, we first recall some terminology about horizontal and vertical strips used in \cite{Deng, GH, Rodrigues2_2013} adapted to our purposes. 

\subsection{Strips across $\Sigma_P^{\inn}$}

Let $\psi_1,\psi_2 \in \RR$ such that $\psi_1<\psi_2$. As depicted in Figure \ref{strips}(a), given a rectangular region $\mathcal{R} = [\psi_1,\psi_2]\times [z_1, z_2]$ in $\Sigma_P^{\inn}$, a \emph{horizontal strip} in $\mathcal{R}$ across $\Sigma_P^{\inn}$ is parametrized by:
$$
\mathcal{H}=\{(\psi,z) \in \mathcal{R}: \psi \in [\psi_1,\psi_2], z\in[u_1(\psi),u_2(\psi)]\},
$$
where $ u_1,u_2: [\psi_1,\psi_2] \rightarrow [z_1,z_2]$  are Lipschitz functions such that $u_1(\psi)<u_2(\psi)$. The \emph{horizontal boundaries} of the strip are the lines parametrized by the graphs of $u_i$, $i\in \{1,2\}$, the \emph{vertical boundaries} are the lines  $\{\psi_i\}\times  [u_1(\psi_i),u_2(\psi_i)]$.
The \emph{maximum height} and the \emph{minimum height} are given, respectively, by:
$$
h^{\Max}=\max_{\psi \in [\psi_1,\psi_2]}u_2(\psi) \qquad \text{and} \qquad h^{\min}=\min_{\psi \in [\psi_1,\psi_2]}u_1(\psi). 
$$
The diameter of $\mathcal{H}$, denoted by $\diam(\mathcal{H})$, is given by $\max_{\psi \in [\psi_1,\psi_2]} |u_1(\psi)-u_2(\psi)|\geq 0$. This number exists because $u_1-u_2$ is a continuous map defined on a compact set.

 A \emph{vertical strip} $\mathcal{R}$ has similar definitions as shown in Figure \ref{strips}(b), with the roles of $\psi$ and $z$ reversed. A region $V$ is called a \emph{vertical cusp} across $\Sigma_P^{\inn}$ if 
$$
V=\{(\psi, z)\in \mathcal{R}: z \in [z_1,z_2], \psi \in[v_1(z),v_2(z)] \}
$$
where $ v_1,v_2: [z_1,z_2] \rightarrow [\psi_1,\psi_2]$ are Lipschitz functions such that for all $z\in [z_1, z_2)$, we have $v_1(z)< v_2(z)$ and $v_1(z_1)= v_2(z_1)$ -- see Figure \ref{strips}(c). The point $p^\star=(v_1(z_1), z_1)$ is called the \emph{cusp point}. 
The vertical boundaries of the vertical cusp should be seen as cuspidal curves as in Definition \ref{cusp_def}.

\begin{figure}[ht]
\begin{center}
\includegraphics[width=\textwidth]{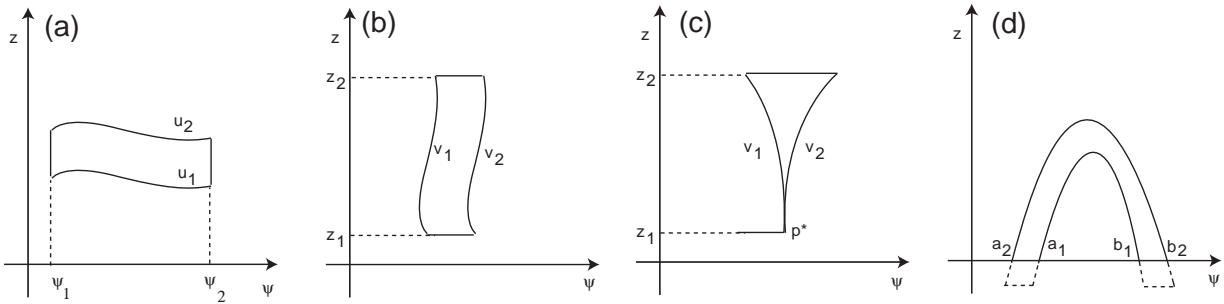}
\end{center}
\caption{\small Different types of strips. (a): Horizontal strip. (b): Vertical strip. (c): Vertical cusp. (d): Horseshoe strip.}
\label{strips}
\end{figure}

As illustrated in Figure \ref{strips}(d), a \emph{horseshoe strip} in $\Sigma_P^{\inn}$ is a subset of $\Sigma_P^{\inn}$ of the form
$$
\mathcal{H}=\left\{ (\psi,z)\in \Sigma_P^{\inn}:\quad \psi\in[a_2,b_2], \quad z \in [u_1(\psi), u_2(\psi)]\right\},
 $$
 where
 \begin{itemize}
\item $[a_1, b_1] \subset [a_2, b_2] \subset [-\tau, \tau]$;
\item $u_1(a_1)=u_1(b_1)=0= u_2(a_2)=u_2(b_2)$.
\end{itemize}
The boundary of $\mathcal{H}$ consists of the graph of $u_2(\psi)$, $\psi\in[a_2,b_2]$, the graph of $u_1(\psi)$,  $\psi\in[a_1,b_1]$, together with the two segments $[a_2, a_1]\times \{0\}$ and $[b_1, b_2]\times \{0\}$. More details in Labouriau and Rodrigues \cite[Section 6]{LR2015}. Similarly, we define horizontal strip,  vertical strip and vertical cusp across $\Sigma_P^{\out}$, $\Sigma_E^{\inn}$ and $\Sigma_E^{\out}$. 

\subsection{Infinite sequence of strips}

In order to describe the dynamics near $P$ we need the geometric notion of a helix. This allows us to characterize the way in which segments in $\Sigma_P^\inn$ get mapped into $\Sigma_P^\out$, and to describe the pre-image of the stable manifold $W_\loc^s(E)$ when pulled back around $P$.
\begin{definition}\label{def_helix} 
Let $a,b\in \RR$ such that $a<b$ and let $H \subset M$ be a surface parametrized by a cover $(\psi,h )\in  \RR\times[a,b]$ where $\psi $ is periodic. A \emph{ helix $\gamma$ on $H$ accumulating on the circle} $h=h_{0}$ is a curve $\gamma :(0,1]\rightarrow H$ such that its coordinates $(\psi (s),h(s))$ are quasi-monotonic functions of $s$ with $$
 \lim_{s\to 0^+}h(s)=h_{0}
 \qquad \mbox{and}\qquad
 \lim_{s\to 0^+}|\psi (s)|=+\infty .
$$
A \emph{double helix on $H$ accumulating on the circle} $h=h_0$ is the union of two disjoint helices $\gamma_1, \gamma_2$ accumulating on $h=h_0$ and a curve connecting their end points $\gamma_1(1)$ and $\gamma_2(1)$.
\end{definition}

For $\alpha=\beta=0$, let $R_P^{\inn}\subset \Sigma_P^{\inn}$ and $R_P^{\out}\subset \Sigma_P^{\out}$ be two squares parametrized by $[-\tau, \tau]\times [-\tau, \tau]$, where $\tau$ is a  small positive number. These squares are centered at the point where the heteroclinic connection meets the cross sections. The following proof is based on \cite{Rodrigues2_2013}.

\begin{lemma}
\label{main_prop}
Let $\gamma$ be a vertical segment in $R_P^{\inn}$ whose angular component is $\psi_0 \in \RR \pmod{2\pi}$. Then there are $n_0 \in \NN$ and a family of intervals $(\mathcal{I}_n)_{n \geq n_0}= \{[\exp(a_n), \exp(b_n)]\}_{n \geq n_0} $, where
$$a_n= \eta_e (-\tau-2n\pi +\psi_0),  \quad \quad b_n= \eta_e ( \tau-2n\pi +\psi_0)$$
such that $\{\psi_0 \} \times [\exp(a_n), \exp(b_n)] \subset \textnormal{graph}(\gamma)$ and $\Pi_P(\{\psi_0\}\times [\exp(a_n), \exp(b_n)])\subset R_P^{\out}.$
\end{lemma}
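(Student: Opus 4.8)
The plan is to read off directly, from the closed form of the local map $\Pi_P$, which portion of the vertical segment $\gamma$ is carried into the output square $R_P^{\out}$. The part of $\gamma$ lying in $\Sigma_P^{\inn(+)}$ — the domain of $\Pi_P$ — is $\{\psi_0\}\times(0,\tau]$, since a vertical segment across $R_P^{\inn}=[-\tau,\tau]\times[-\tau,\tau]$ reaches down to $W^s_{\loc}(P)=\{z=0\}$. Using $\Pi_P(\psi_0,z)=\big(\psi_0-\tfrac1{\eta_e}\ln z \bmod 2\pi,\ z^{\delta_P}\big)$ I would ask when the image lands in $R_P^{\out}=[-\tau,\tau]\times[-\tau,\tau]$; this separates into a radial condition on the second coordinate and an angular condition on the first, and essentially the whole content of the lemma is the bookkeeping of the angular one.

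The radial condition is $z^{\delta_P}\in[-\tau,\tau]$, which, since $z>0$, is simply $z\le\tau^{1/\delta_P}$ and holds for all sufficiently small $z$. For the angular condition I would require $\theta\in[-\tau,\tau]$ and $n\in\NN$ with $\psi_0-\tfrac1{\eta_e}\ln z-2n\pi=\theta$. Because $-\tfrac1{\eta_e}\ln z\to+\infty$ as $z\to0^+$, this is met on a discrete family of $z$-windows indexed by the winding number $n$. Solving the double inequality $-\tau\le\psi_0-\tfrac1{\eta_e}\ln z-2n\pi\le\tau$ for $\ln z$ — where the logarithm reverses orientation and contributes the factor $\eta_e$ — yields exactly $a_n\le\ln z\le b_n$ with $a_n=\eta_e(-\tau-2n\pi+\psi_0)$ and $b_n=\eta_e(\tau-2n\pi+\psi_0)$, hence $z\in[\exp(a_n),\exp(b_n)]=\mathcal{I}_n$. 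Moreover $z\mapsto\psi_0-\tfrac1{\eta_e}\ln z$ is a strictly monotone homeomorphism, so as $z$ sweeps $\mathcal{I}_n$ the angular coordinate sweeps $[2n\pi-\tau,2n\pi+\tau]$ bijectively; reduced mod $2\pi$ this is precisely $[-\tau,\tau]$, so the \emph{entire} interval $\mathcal{I}_n$ — not merely its endpoints — is sent into the correct angular strip.

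Finally I would fix $n_0$. Since $a_n,b_n\to-\infty$ as $n\to\infty$, both endpoints $\exp(a_n),\exp(b_n)\to0$, so the intervals $\mathcal{I}_n$ accumulate on $z=0$ and shrink. Choosing $n_0\in\NN$ large enough that, for every $n\ge n_0$, one has both $\exp(b_n)\le\tau$ and $\exp(b_n)\le\tau^{1/\delta_P}$ simultaneously guarantees $\{\psi_0\}\times\mathcal{I}_n\subset\mathrm{graph}(\gamma)$ as well as the radial condition at every point of $\mathcal{I}_n$; combined with the angular statement this gives $\Pi_P(\{\psi_0\}\times\mathcal{I}_n)\subset R_P^{\out}$ for all $n\ge n_0$.

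The computation is entirely elementary, so there is no substantial obstacle; the only place demanding care is the inversion of the modular angular condition, where one must track the orientation reversal and the scaling by $\eta_e$ in order to land on precisely the stated $a_n,b_n$ and to confirm that each window is swept exactly, rather than merely overlapped. A secondary, unproblematic point is that $n_0$ must be taken large enough to satisfy the two containments at once, which is automatic because all relevant quantities tend to $0$. Geometrically, the content of the lemma is that $\Pi_P(\gamma)$ is a helix in the sense of Definition \ref{def_helix} accumulating on the circle $\rho=0$, and the intervals $\mathcal{I}_n$ single out the successive loops of this helix that thread the small square $R_P^{\out}$.
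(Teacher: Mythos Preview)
Your proof is correct and follows essentially the same approach as the paper: both compute directly from the closed form of $\Pi_P$, solve the angular condition $\psi_0-\tfrac{1}{\eta_e}\ln z \equiv \theta \pmod{2\pi}$ with $\theta\in[-\tau,\tau]$ to obtain the intervals $[\exp(a_n),\exp(b_n)]$, and interpret the image of $\gamma$ as a helix accumulating on $W^u_{\loc}(P)$. Your treatment is in fact slightly more explicit than the paper's in separating out the radial condition $z^{\delta_P}\le\tau$ and in specifying what $n_0$ must satisfy, but there is no substantive difference in method.
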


\begin{proof}
Write $\gamma(s) = (\psi_0, z(s)) \in R_P^{\inn} \subset \Sigma_P^{\inn}$, where $z(s)\geq 0$ is an increasing map of $s$ and $\lim_{s \rightarrow 0^+}z(s)=0$. The function $\Pi_P$ maps the curve $\gamma$ into a helix accumulating on $W^u_{\loc}(P)$ parametrized by:
$$
\Pi_P(\gamma(s))= \Pi_P\left(\psi_0, z(s)\right)= \left(\psi_0-\frac{1}{\eta_e} \ln z(s),z(s)^{\delta_P}\right) \cong (\psi(s), \rho(s)).
$$
Indeed $\rho(s)$ and $\psi(s)$ are monotonic (since $z(s)$ is monotonic)  and
$$
\lim_{s \rightarrow 0^+} z(s)^{\delta_P}=0 \quad \text{and} \quad \lim_{s \rightarrow 0^+} -\frac{\ln(z(s))}{\eta_e}+\psi_0=+\infty.
$$
The sequences $\exp(a_n)$ and $\exp(b_n)$ defining the family of intervals are obtained from points where the helix $\Pi_P(\gamma(s))$ meets the vertical boundaries of $R_P^{\out}$, defined locally by $$(\psi, \rho)=\{\pm \tau + 2 n \pi \}_{n \in \NN}  \times [0,\tau].$$ Let $n_0$ the smallest natural number for which the equation $\psi(s) = \pm \tau+ 2 n \pi$ can be solved for $z$. Then for $n \geq n_0$ we get $z= \exp({a_n})$ and $z= \exp(b_n)$, where
$$
a_n= \eta_e (-\tau -2n\pi +\psi_0) \qquad \text{and} \qquad b_n=  \eta_e (\tau -2n\pi +\psi_0).$$
\end{proof}

It is worth observing that $a_n$ and $b_n$ depend continuously on $\psi_0 \in [-\tau ,\tau]$ and 
\begin{enumerate}
\item $\forall n \geq n_0,$ we have that $1> \exp(b_n)>\exp(a_n)>\exp(b_{n+1})$ and
\item   $\lim_{n \to \infty} \exp(a_n)=\lim_{n \to \infty} \exp(b_n)=0$.
\end{enumerate}
\medbreak

Since $a_n$ and $b_n$ of Lemma \ref{main_prop} depend smoothly on the angular coordinate $\psi_0$ of the vertical segment $\gamma$, the horizontal strips
\begin{equation}
\label{Hn_def}
H_n = [-\tau, \tau] \times  [\exp (a_n(\psi)), \exp (b_n(\psi))] \subset R_P^{\inn},  \quad n \geq n_0 \in \textbf{N}, \quad \psi \in[-\tau, \tau]
\end{equation}
are mapped by $\Pi_P$ into $R_P^{\out}$. The image under $\Pi_P$ of the endpoints of the horizontal boundaries of $H_n$ must join the end points of $\Pi_P(\{\psi_0\}\times [\exp(a_n), \exp(b_n)])\subset R_P^{\out}$. Moreover, for each $n>n_0$, if $h_n^{\Max}$ and $h_n^{\min}$ denote, respectively, the maximum and the minimum height of the horizontal strip $H_n$ then $$
\lim_{n \to \infty} h_n^{\Max}  =\lim_{n \to \infty} h_n^{\min} = \lim_{n \to \infty} h_n^{\Max}-h_n^{\min}  =0.$$
\bigbreak
 \begin{remark}
 \label{boundaries}
 From the previous analysis, it is easy to see that for $n \geq n_0 \in \NN$, the horizontal (resp.\ vertical) boundaries of $H_n$ are mapped into the vertical (resp. horizontal) boundaries of $\widetilde{H}_n=\Pi_P(H_n)$. The sequence $(H_n)_{n\geq n_0}$ converges, in the Hausdorff topology, to the stable manifold of $P$, and $(\widetilde{H}_n)_{n\geq n_0}$ converges to the unstable manifold of $P$.
\end{remark}

\begin{lemma}\label{doublehelix}
\label{helix}
In the following cases:
\begin{enumerate}
\item $b_{22}, k<0$ and $\beta>0$,
\item $b_{22}, k>0$ and $\beta<0$,
\end{enumerate}
the image of the curve $W_\loc^s(E) \cap \Sigma_E^{\inn(+)}$ under  $(\Psi_{PE}\circ\Pi_P)^{-1}$ is a double helix in $\Sigma_P^\inn$ accumulating on $W^s_{\loc}(P)\cap \Sigma_P^\inn$ and connected by a fold point.
 \end{lemma}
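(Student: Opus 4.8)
The plan is to reduce the computation of the preimage to two successive, fully explicit steps, exploiting that both $\Psi_{PE}$ and $\Pi_P$ are given in closed form. Since $W^s_\loc(E)$ meets $\Sigma_E^\inn$ in the line $\{z=0\}$ (the common boundary of $\Sigma_E^{\inn(\pm)}$) and the $z$-component of $\Psi_{PE}$ reads $z=b_{22}\rho+k\psi^2+\beta(1+\nu\psi)$, I would first determine $\Psi_{PE}^{-1}(\{z=0\})$ inside $\Sigma_P^\out$ by setting $z=0$, obtaining the parabola
\[
\mathcal{Q}=\left\{(\psi,\rho)\in\Sigma_P^\out \ :\ \rho=\rho(\psi):=-\tfrac{1}{b_{22}}\bigl(k\psi^2+\beta(1+\nu\psi)\bigr)\right\}.
\]
Its two intersections with $W^u_\loc(P)\cap\Sigma_P^\out=\{\rho=0\}$ are precisely the two transverse $[P\to E]$ connections arising in case (ii) of the unfolding of the tangency.

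Next I would carry out a sign analysis to pin down the shape of $\mathcal{Q}$. Its concavity is governed by $\rho''=-2k/b_{22}$ and its value near the vertex by $\rho(0)=-\beta/b_{22}$. In both case (1) ($b_{22},k<0$, $\beta>0$) and case (2) ($b_{22},k>0$, $\beta<0$) one checks $-2k/b_{22}<0$ and $-\beta/b_{22}>0$, so $\mathcal{Q}$ is concave down with a strictly positive maximum. Consequently the dynamically relevant part $\mathcal{Q}\cap\{\rho>0\}$ (the only part lying in the image of $\Pi_P$, which has $\rho=z^{\delta_P}>0$) is a single compact arc joining the two roots $\psi_-<\psi_+$ of $\rho$ and attaining its maximum at the vertex $\psi_0$; taking $\beta$ small keeps this arc inside the cross section.

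Then I would pull this arc back through $\Pi_P^{-1}$, which sends $(\psi,\rho)\in\Sigma_P^\out$ to $\bigl(\psi+\tfrac{1}{\eta_c}\ln\rho \bmod 2\pi,\ \rho^{1/\delta_P}\bigr)\in\Sigma_P^\inn$, using $\eta_e\delta_P=\eta_c$. Parametrising $\mathcal{Q}\cap\{\rho>0\}$ by $t\in(\psi_-,\psi_+)$, the preimage curve has height $z^{\inn}(t)=\rho(t)^{1/\delta_P}$ and angle $\psi^{\inn}(t)=t+\tfrac{1}{\eta_c}\ln\rho(t)$. As $t\to\psi_\pm$ we have $\rho(t)\to0^+$, hence $z^{\inn}(t)\to0$ and $\psi^{\inn}(t)\to-\infty$; each endpoint therefore produces, exactly as in Lemma \ref{main_prop}, a helix with quasi-monotonic coordinates accumulating on $W^s_\loc(P)\cap\Sigma_P^\inn=\{z=0\}$. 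It remains to join the two branches at the vertex: at $t=\psi_0$ we have $\rho'(\psi_0)=0$, so $\tfrac{dz^{\inn}}{dt}(\psi_0)=\tfrac{1}{\delta_P}\rho^{1/\delta_P-1}\rho'(\psi_0)=0$ with $z^{\inn}$ attaining a strict maximum (whence its derivative changes sign), while $\tfrac{d\psi^{\inn}}{dt}(\psi_0)=1+\tfrac{1}{\eta_c}\tfrac{\rho'(\psi_0)}{\rho(\psi_0)}=1\neq0$. By the definition of a fold point (with the height coordinate $z^{\inn}$ playing the role of $\gamma_1$) this is a fold, and the top arc of the curve through it is the connecting curve of Definition \ref{def_helix}. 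Assembling the two helices and this fold yields the asserted double helix.

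The routine parts are the two explicit inversions and the sign bookkeeping; the delicate point is matching the object produced to the precise definitions. Specifically, I expect the main obstacle to be verifying that the angular coordinate is genuinely quasi-monotonic under the $\bmod\,2\pi$ identification near the fold, where the dominant logarithmic term no longer controls monotonicity, and that the fold is one-sided in the sense of Definition \ref{cusp_def}, i.e.\ that near $\psi_0$ the curve stays on one side of the horizontal tangent $\{z=z^{\inn}(\psi_0)\}$. Both follow from $\rho$ being strictly concave with a nondegenerate maximum, but they require the careful local estimates that turn the heuristic picture of a ``cap that spirals down at both ends'' into the formal double-helix structure.
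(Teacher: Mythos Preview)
Your proposal is correct and follows essentially the same approach as the paper: first compute $\Psi_{PE}^{-1}(\{z=0\})$ as a parabola in $\Sigma_P^\out$ whose $\{\rho>0\}$ arc consists of two connected segments (in the sense of Definition~\ref{def_segmento}), then pull each segment back through $\Pi_P^{-1}$ exactly as in Lemma~\ref{main_prop} to obtain a helix accumulating on $W^s_\loc(P)$, with smoothness at the vertex giving the connecting fold. Your version is simply more explicit---you spell out the sign analysis and verify the fold-point conditions directly---whereas the paper just invokes the segment terminology and refers back to Lemma~\ref{main_prop}.
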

\begin{proof}
In both cases, the image of $W_\loc^s(E) \cap \Sigma_E^{\inn(+)}$ under $\Psi_{PE}^{-1}$ consists of two connected vertical segments in $\Sigma_P^{\out}$. Using an argument similar to that of the proof of Lemma \ref{main_prop} and the expression
\begin{equation*}
 \Pi^{-1}_P(\psi, \rho)=\left(\psi+\frac{1}{\eta_e \delta_P}\ln \rho, \rho^{\frac{1}{\delta_P}}\right) \cong (\psi,z) \in \Sigma_P^\inn,
 \end{equation*}
we conclude that the image under $\Pi_P^{-1}$ of each one of these segments is a helix accumulating on $W^s_{\loc}(P)$. Since the transition maps are smooth, the pre-image of the two connected segments in $\Sigma_P^\out$ is a double helix; in particular, the end points are smoothly connected.
\end{proof}

\section{Dynamics for the organizing center}\label{organizing_center}
In this section we put together the local dynamics around the saddles to characterize the dynamics for $\dot{x}=f_0(x)$, the organizing center. In what follows denote by $R$ the map $\Psi_{EP}\circ \Pi_E \circ \Psi_{PE} \circ \Pi_P$ defined on the subset of $$\bigcup_{n\geq n_0} {H}_n \subset R_P^{\inn},\qquad n_0\in \NN$$ that returns to $R_P^{\inn}$.
The letter $\mathcal{U}$ denotes a small neighbourhood of the singular cycle $\Gamma$ such that
$$R_P^{\inn}\subset (\mathcal{U}\cap \Sigma_P^{\inn}) \qquad \text{and} \qquad R_P^{\out}\subset (\mathcal{U}\cap \Sigma_P^{\out}).$$
Recall the definitions of $R_P^{\inn}$ and $R_P^{\out}$ given in Section \ref{localdyn_P} and the expression for $H_n$ given in (\ref{Hn_def}).
Special attention will be given to the cases where we observe chaos (shift dynamics). 

\begin{definition}
We say that there exist \emph{shift dynamics} near the cycle $\Gamma$ if the return map $R$ is topologically conjugate to a shift of at least two symbols. 
\end{definition}
We evoke the theory developed by Deng \cite{Deng} and Moser \cite{Moser} to prove the existence of shift dynamics for the organizing center. The ideas of \cite{Moser} are summarized and well exposed by Wiggins \cite{Wiggins}. Roughly speaking, the works \cite{Moser, Wiggins} give verifiable conditions, sometimes called \emph{Conley-Moser conditions}, for a map to possess an invariant set on which it is topologically conjugate to a Bernoulli shift with at least two symbols. The paradigmatic case where their results may be applied is the classic Smale horseshoe.
\medbreak
Deng's idea \cite{Deng} is to extend the work \cite{Moser} that states that the dynamics of the classic Smale horseshoe may be completely described by the Bernoulli shift automorphism acting on the space of bi-infinite sequence of symbols. The extension \cite{Deng} uses a quotient symbolic system, and thus a semi-conjugacy, which implies that the invariant manifolds of the saddles are dense in the set of trajectories that stay near the singular cycle $\Gamma$. 
 
\subsection{Types of horseshoes}
The following definitions are due to \cite{Deng}. See also \cite[Section 2.3]{Wiggins}.
\begin{definition}\label{reg_horseshoe_def}
Let $H_1$ and $H_2$ be two disjoint horizontal strips across $\Sigma_P^{\inn}$ with $H_2$ above $H_1$. Let $V_1=R(H_1)$ and $V_2=R(H_2)$ be two vertical strips. For $i,j \in \{1,2\}$, denote by $V_{ji}=R(H_i)\cap H_j$ and $H_{ij}=R^{-1}(V_{ji})$.  A continuous map $R$ defined on $H_1\cup H_2$ is called a \emph{regular horseshoe} if and only if
\medbreak
\begin{enumerate}
\item the map $R$ maps $H_{ij}$ homeomorphically into $V_{ji}$;
\item vertical boundaries of $H_i$ are mapped onto vertical boundaries of $V_i$. The vertical boundaries of $V_{ji}$ are contained in the vertical boundaries of $V_i$.
\item if $H$ is a horizontal strip which intersects $H_j$ fully, then $R^{-1}(H)\cap H_i$ is a horizontal strip intersecting $H_i$ fully, for $i\in\{1,2\}$ and $\diam( R^{-1}(H)\cap H_i)<\nu \diam (H)$ for some $\nu \in (0,1)$; the analogous should hold for vertical strips. 
\end{enumerate}
\end{definition} 
 
\begin{definition}
\label{cusp_horseshoe_def}
Let $H_1$ and $H_2$ be two disjoint horizontal strips across $\Sigma_P^{\inn}$ with $H_2$ sitting above $H_1$. Let $V_1=R(H_1)$ and $V_2=R(H_2)$ be two vertical cusps which have only the cusp point $p^\star$ in common. A continuous map $R$ defined on $H_1\cup H_2$ is called a \emph{cusp horseshoe} if and only if
\medbreak
\begin{enumerate}
\item vertical boundaries of $H_i$ are mapped onto vertical boundaries of $V_i$. One of the horizontal boundaries of $H_i$, say $\ell_i$, is mapped into the cusp point and the other, denoted by $u_i$ is mapped onto the horizontal boundary of $V_i$. Moreover, $R$ maps $H_i \backslash \{\ell_i\}$ homeomorphically into $V_i \backslash \{p^\star\}$;
\medbreak
\item there is a constant $\mu \in(0,1)$ such that if $H$ is a horizontal strip in $H_1 \cup H_2$ and the cusp point lies below or on the lower horizontal boundary of $H$, then $R^{-1}(H) \cap H_i$ is a horizontal strip in $H_i$ and $$\diam (R^{-1}(H) \cap H_i)\leq \nu \diam (H), \qquad \text{ for some} \qquad \nu \in (0,1).$$ Similarly if $V$ is a vertical cusp in $V_1 \cup V_2$ with the same cusp point $p^\star$ and it lies below or on the lower boundary of $H_i$ then $R(V) \cap V_i$ is a vertical cusp in $V_i$ with the same cusp point and $\diam(R(V) \cap V_i)\leq \nu \diam(V)$.
\end{enumerate}
\end{definition}

\subsection{The organizing center} 
In the spirit of Homburg \emph{et al} \cite{HKK}, we are now in a position to state our main theorem about the previously defined horseshoes in the organizing center. It is convenient to introduce the notation $\delta:=\delta_1 \delta_P$.
\begin{theorem}\label{main-thm}
The properties listed in Table \ref{dynamics} are satisfied by an open and dense set of $C^\infty$-vector fields on $M$ that satisfy $(\mathbf{H1})-(\mathbf{H4})$, depending on the signs of the nonzero parameters $a_{21}$, $k$ and $b_{22}$:
\begin{table}[ht]
\begin{center}
\begin{tabular}{cllllcc}
Case & $a_{21}$ & $k$ & $b_{22}$ & Dynamics &  Fig.~\ref{horseshoes} &  Fig.~\ref{multipulses}\\ \hline
1 & negative & any & any & Isolated cycle & -& -\\\hline 
2 & positive  & negative & negative &   Isolated cycle & - & I\\ \hline
3& positive    & positive & negative & Cusp horseshoe if $\delta_P \delta_2>1$ & (a) & II\\ \hline
4& positive    & negative & positive & Cusp horseshoe if $\delta<1$ & (b)& III\\ \hline
5& positive    & positive & positive & Regular horseshoe if $\delta>1$ & (c) & IV\\ \hline
\hline
\end{tabular}
\end{center}
\caption{Dynamics near $\Gamma$ depending on the sign of the parameters and the schemes of Figures ~\ref{horseshoes} and  \ref{multipulses}.} 
\label{dynamics}
\end{table}
\end{theorem}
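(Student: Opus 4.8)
The plan is to fix the organizing center $\alpha=\beta=0$ and to verify, case by case, the hypotheses of the Conley--Moser theorem (in the form of \cite{Moser, Wiggins} for the regular horseshoe and of Deng \cite{Deng} for the cusp horseshoe) for the return map $R=\Psi_{EP}\circ\Pi_E\circ\Psi_{PE}\circ\Pi_P$ acting on the family of horizontal strips $\{H_n\}_{n\ge n_0}$ constructed in Lemma \ref{main_prop}. Case~1 ($a_{21}<0$) requires no work: by Remark \ref{remark-a21} points of $\Sigma_P^{\inn}$ with $z<0$ leave the neighbourhood, so $\Gamma$ is isolated. For the remaining cases I assume $a_{21}>0$ and treat $\Psi_{EP}$ as the rotation by $\pi/2$ permitted in Section \ref{local}.

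The geometric heart of the argument is to follow a strip $H_n$ through the four maps. By Remark \ref{boundaries}, $\Pi_P(H_n)$ is a thin strip in $\Sigma_P^{\out}$ accumulating on $W^u_\loc(P)$, with $\rho>0$ small. Applying $\Psi_{PE}$, whose nonlinear part contributes the term $k\psi^2$ to the $z$--coordinate, turns each horizontal boundary into a parabola $z=k\psi^2+b_{22}\rho$; these are precisely the quadratic graphs $\mathcal{P}^{\pm}$ of Section \ref{localdyn_E}, convex for $k>0$ and concave for $k<0$, with a vertical offset whose sign equals that of $b_{22}$ (since $\rho>0$). This offset plays the role of the parameter $\beta$ in Proposition \ref{dyn_E_proposition}, so that proposition classifies the image $\Pi_E\bigl(\Psi_{PE}(\Pi_P(H_n))\bigr)\cap\Sigma_E^{\out(+)}$ directly. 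Case~2 ($k<0$, $b_{22}<0$) falls under item~(1): the image is empty, no point returns, and $\Gamma$ is isolated. Case~3 ($k>0$, $b_{22}<0$) falls under item~(3), giving a \emph{disconnected} cuspidal curve tangent to $E^{cu}$; Case~4 ($k<0$, $b_{22}>0$) falls under item~(2), a \emph{connected} cuspidal curve with a fold point; and Case~5 ($k>0$, $b_{22}>0$) falls under item~(4), a smooth curve with a single fold and a well-defined tangent. Composing with the rotation $\Psi_{EP}$ sends the cuspidal images of Cases~3 and~4 to vertical cusps across $\Sigma_P^{\inn}$, whose common cusp point is the image of $\Sigma_E^{\out}\cap W^u_\loc(E)=\{(0,0)\}$, and the folded image of Case~5 to a pair of vertical strips. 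The topological requirements of Definitions \ref{reg_horseshoe_def} and \ref{cusp_horseshoe_def}---that the $H_i$ map across the $V_i$, that one horizontal boundary goes to the cusp point and the other to a horizontal boundary, and that vertical boundaries go to vertical boundaries---then follow from the monotonicity built into $\Pi_P$, from Lemma \ref{lemma-segment}, and from Lemma \ref{main_prop}.

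What remains, and what I expect to be the main obstacle, is the contraction estimate: condition~(3) of Definition \ref{reg_horseshoe_def} and condition~(2) of Definition \ref{cusp_horseshoe_def}, requiring $\diam(R^{-1}(H)\cap H_i)\le\nu\,\diam(H)$ and the dual statement for vertical strips and cusps, with a uniform $\nu\in(0,1)$. Differentiating $R$ and tracking the thin direction through $\rho=z^{\delta_P}$ (from $\Pi_P$) and through $x=z_E^{\delta_1}$, $y=y_E\,z_E^{\delta_2}$ (from $\Pi_E$), the contraction rate turns out to be a monomial in the saddle indices $\delta_1,\delta_2,\delta_P$. For Case~5 the image is a genuine strip, the dominant factor is $\delta=\delta_1\delta_P$, and the condition reads $\delta>1$. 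In the cusp cases the width of the image is governed near the cusp point by the \emph{strong} contraction $z_E^{\delta_2}$ rather than by $z_E^{\delta_1}$, which replaces one factor $\delta_1$ by $\delta_2$ and yields $\delta_P\delta_2>1$ for Case~3; in Case~4 the connected geometry retains the weak index and the relevant inequality instead becomes $\delta=\delta_1\delta_P<1$. Establishing these estimates uniformly in $n$, and in particular controlling the degenerate contraction as $z_E\to 0$ at the cusp point, is the delicate step and is precisely where Deng's passage to a quotient symbolic system \cite{Deng} is needed in order to obtain the semiconjugacy with the full shift.

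Finally, openness and density are immediate. The classification depends only on the strict conditions $a_{21}\neq0$, $k\neq0$, $b_{22}\neq0$, $\det\neq 0$ together with the index inequalities recorded in Table \ref{dynamics}, all of which are open conditions on the finitely many jets of the transition maps; their simultaneous non-degeneracy is dense among vector fields satisfying $(\mathbf{H1})$--$(\mathbf{H4})$, the residual Sternberg--Takens condition of $(\mathbf{H2})$ being already assumed. Hence the stated properties hold on an open and dense set.
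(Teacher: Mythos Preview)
Your overall architecture matches the paper's: you isolate the organizing center, you follow the strips $H_n$ through $\Pi_P$, $\Psi_{PE}$, $\Pi_E$, $\Psi_{EP}$, you invoke Proposition~\ref{dyn_E_proposition} to classify the shape of $R(H_n)$ in the four sign combinations, and you plan to feed the result into the Conley--Moser/Deng machinery. Cases~1 and~2 and the open--dense remark are fine.

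The genuine gap is in where you locate the index inequalities. You place all three conditions $\delta_P\delta_2>1$, $\delta<1$, $\delta>1$ under the heading ``contraction estimate'' (condition~(3) of Definition~\ref{reg_horseshoe_def}, condition~(2) of Definition~\ref{cusp_horseshoe_def}). In the paper only the Case~3 inequality $\delta_P\delta_2>1$ arises that way: it is exactly the bound $\diam(R^{-1}(H)\cap H_n^i)\lesssim\diam(H)^{\delta_P\delta_2}$. The conditions $\delta<1$ (Case~4) and $\delta>1$ (Case~5) come from an earlier and logically independent step that you have suppressed: one must first check that $R(H_n)$ actually \emph{meets} $H_n$. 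In Case~4 the image is a connected cusp attached at $p^\star$, and for it to reach up to the original strip one needs its height $\sim\exp(a_n)^{\delta}$ to exceed $\exp(a_n)$, i.e.\ $\delta<1$. In Case~5 the image is a fold whose vertex sits at height $\sim\exp(a_n)^{\delta}$, and for the two arms of the fold to cross $H_n$ from below one needs the vertex to lie \emph{under} the strip, i.e.\ $\delta>1$. Your sentence ``the $H_i$ map across the $V_i$ \ldots follow from the monotonicity built into $\Pi_P$'' is precisely where this fails: monotonicity gives you the shape of $R(H_n)$ but not the height comparison, and if you try to extract $\delta<1$ or $\delta>1$ from the diameter estimate you will not succeed (indeed $\delta<1$ is an \emph{expansion} condition in the vertical direction, not a contraction rate). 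In Case~3 the intersection is automatic for large $n$ (see inequality~(\ref{case3})), which is why only the contraction constraint survives there. Separating the ``does $R(H_n)$ cross $H_n$?'' question from the ``is the crossing hyperbolic?'' question is the missing ingredient in your outline.
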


Hereafter we will use the classification given by Table \ref{dynamics}. For instance, when we say \emph{Case 5}, we refer to the case where $a_{21}>0$, $k>0$ and $b_{22}>0$. 

\begin{proof}We proceed case by case, according to Table \ref{dynamics}.  The constants are nonzero. Recall the roles of $a_{21},k,b_{22}$ from Table \ref{notation}. In Cases 1 and 2, the recurrent dynamics consist of the cycle, which is Lyapunov unstable, as we shall observe. We explain step by step all the details for Case 3. For Cases 4 and 5, in order to facilitate the reading, we just sketch the proof; the arguments are similar, with appropriate adaptations. 
\begin{itemize}
\medbreak
\item[\textbf{Case 1:}] This case was already discussed in Remark \ref{remark-a21}. The dynamics in a neighbourhood $\mathcal{U}$ of the cycle are trivial: solutions that start near the cycle are thrown away from it in the transition from $\Sigma_E^{\out}$ to $\Sigma_P^{\inn}$. 
\medbreak
\item[\textbf{Case 2:}]  Where $a_{21}>0$ and $k,b_{22}<0$, the cycle is also isolated, because trajectories stop following it along the connection $[P \to E]_t$. More specifically, trajectories starting at $\mathcal{U}\cap \Sigma_P^{\inn(+)}$ are mapped into $\Sigma_P^{\out(+)}$ (where $\rho>0$) and then follow the lower part of $\Psi_{PE}(W^u_{\loc}(P)\cap \Sigma_P^{\out})\subset \Sigma_E^{\inn(-)}$. Therefore, they leave any small neighbourhood of the cycle.
\medbreak
\item[\textbf{Case 3:}] We have $a_{21},k>0$ and $b_{22}<0$. We would like to apply \cite[Theorem 4.7]{Deng} to conclude the existence of a cusp horseshoe and thus shift dynamics. By the remark after Lemma \ref{main_prop}, there is a sequence of strips (see (\ref{Hn_def}))
$${H}_n = [-\tau, \tau] \times  [\exp({a}_n(\psi)), \exp({b}_n(\psi))] \subset R_P^{\inn},  \quad n \geq n_0 \in \mathbb{N}, \quad \psi \in[-\tau, \tau],$$
such that $\widetilde{H}_n=\Pi_P(H_n)$ is a sequence of horizontal strips in $R_P^\out \subset \Sigma_P^{\out}$, accumulating on $W^u_\loc(P)$. For each such strip, one of two situations occurs: either $W^s_{\loc}(E)$ does not intersect $\widetilde{H}_n$,  meaning that all solutions fall directly into $\Sigma_E^{\inn(-)}$, or  $W^s_{\loc}(E)$ intersects transversely the boundaries of the strip, implying that part of the strip is mapped into $\Sigma_E^{\inn(+)}$.  For fixed $\tau>0$ and $n\in\NN$ large enough, say $n>n_1>n_0$, the latter always happens. 
\medbreak
The parts of $H_n$ that are mapped under $\Psi_{PE}\circ \Pi_P$ into $\Sigma_E^{\inn(+)}$ define two disjoint horizontal strips that we denote by $H_n^1$ and $H_n^2$. This pair of strips is well defined because the set $\Pi_P^{-1}(W_\loc^s(E)\cap \Sigma_P^\out)$ is a double helix in $\Sigma_P^{\inn}$ dividing the strip $H_n$ in two horizontal strips. Initial conditions in $H_n^1$ and $H_n^2$ follow (at least once) the cycle. By Proposition \ref{dyn_E_proposition} (3) these are mapped into cuspidal regions across $\Sigma_E^{\out}$ and thus, by $\Psi_{EP}$, into cuspidal regions across $\Sigma_P^{\inn}$.
\medbreak
Let $h_{n,j}=h_{n,j}^{\Max}$ be the height of $H_n^j$. Hereafter, we omit the letter $j$ in $h_{n,j}$ to simplify the notation. If $n$ is large enough  then, for $j \in \{1,2\}$, the width of $\widetilde{H}_n^j$ is of order $\tau - \left(\frac{h_n}{k} \right)^{\frac{\delta_P}{2}}$.
So in order to intersect the original $H_n^j$, the height of $\widetilde{H}_n^j$ under $\Psi_{EP}\circ \Pi_E \circ \Psi_{PE}$ should satisfy the inequality 
\begin{equation}\label{case3}
\tau - \left(\frac{h_n}{k} \right)^{\frac{\delta_P}{2}}>h_n^\frac{1}{\delta_1}, 
\end{equation}
which is always true for $h_n$ very small (meaning $n\in \NN$ large enough), see Figure \ref{graph1}.

\begin{figure}[ht]
\begin{center}
\includegraphics[height=5.5cm]{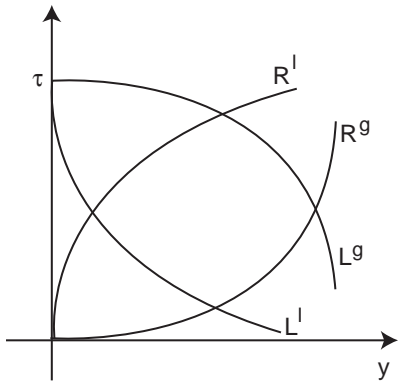}
\end{center}
\caption{\small Graph of the two maps of equation (\ref{case3}) in Case 3: $L(y)=\tau - \left(\frac{y}{k} \right)^{\frac{\delta_P}{2}}$ and $R(y)=y^\frac{1}{\delta_1}$. The exponents $l$ and $g$ mean that the constants $\frac{1}{\delta_1}$ and $\frac{\delta_P}{2}$ are less or greater than 1, respectively.}
\label{graph1}
\end{figure}
\medbreak

Employing the notation $V_n^j:=R(H_n^j)$ for $j \in \{1,2\}$, we check the assumptions of Definition \ref{cusp_horseshoe_def}:
\medbreak
\begin{itemize}
 \item By Remark \ref{boundaries}, vertical (horizontal) boundaries of $H_n^j$ are mapped onto horizontal (vertical) boundaries of $\widetilde{H}_n^j$. From this, we conclude that the vertical boundaries of $H_n^j$ are mapped by $R$ onto the vertical boundaries of $V_n^j$ for $j \in \{1,2\}$. By construction, one of the vertical boundaries of each $\widetilde{H}_n^j$ lies in $W^s(E)$;
 \medbreak
  \item one of the horizontal boundaries of $H_n^j$, say $\ell_n^j$, is mapped (by continuity) into the cusp point $p^\star=(0,0)\in \Sigma_P^{\inn}$, the other is mapped onto the horizontal boundary of $V_n^j$;
  \medbreak
 \item by construction, $R$ maps each $H_n^j\backslash \ell_n^j$ homeomorphically onto $V_n^j \setminus \{p^\star\}$;
 \medbreak
 \item  if $H$ is a horizontal strip in $H_n^1\cup H_n^2$ then it is easy to check that $\hat{H}=R^{-1}(H)\cap H_n^i$, $i=1,2$, is again a horizontal strip in $H_n^1\cup H_n^2$. Moroever, if the height of $H$ is small enough there exists $k\in \RR^+$ such that $ \diam (\hat{H}) < k \diam (H)^{\delta_P \delta_2}< k \diam(H)$. The last equality holds because $\delta_P \delta_2>1$.  Similarly for vertical strips.
 \end{itemize}
 \medbreak
Therefore, we can apply \cite[Theorem 4.7]{Deng} to conclude that there exists a cusp horseshoe and thus a subset $A \subset \Sigma_P^{\inn}$ such that the first return map to it is topologically conjugate to a full shift. The effect of the return map on the rectangles is illustrated in Figure \ref{horseshoes} (a). 
\medbreak

\item[\textbf{Case 4:}] 
Here we have $a_{21}, b_{22}>0$ and $k<0$. We use the same method from \cite{Deng} as in Case 3 to conclude the existence of chaos. 
We proceed using the sequence of rectangles ${H}_n$ as in Case 3. Since $\Psi^{-1}_{PE}(W^s_{\loc}(E)\cap \Sigma_E^{\inn})$ is a parabola intersecting $\widetilde{H}_n$ for $n$ large enough, the set $\Psi_{PE}(\widetilde{H}_n)$ is a horseshoe strip in $\Sigma_E^{\inn(+)}$. Then by Proposition \ref{dyn_E_proposition}, $R(H_n)$ is bounded by closed cusp curves tangent to $E^{cu}$ and centered at the heteroclinic connection that meets $\Sigma_P^{\inn}$. Whether or not this image intersects the original $H_n$ as desired depends on the height of the bounding cusp curves, \emph{i.e.}\ their maximal distance from $W^s_\loc(P)$ in $\Sigma_P^\inn$, in comparison to the height of $H_n$. The latter is of order $\exp(a_n)$ and under $R$ evolves to the order $\exp(a_n)^\delta$. Since $\exp(a_n) \to 0$ for $n \to \infty$, we have
$$
\exp(a_n)^\delta>\exp(a_n) \Leftrightarrow \delta<1
$$
for large $n$, say $n>n_1$, and so the strips $H_n$ and $R(H_n)$ intersect if $\delta<1$. 
\medbreak
By possibly making $H_n$ thinner we can assure just as in the previous case, that assumptions (1) and (2) of Definition \ref{cusp_horseshoe_def}  are satisfied, with the exception that both horizontal boundaries are mapped into the cusp point $p_\star \in \Sigma_P^\inn$ instead of just one. Therefore, the argument of \cite[Theorem 4.7]{Deng} may be applied along the same lines. The effect of the return map on the strips is illustrated in Figure \ref{horseshoes} (b).  

\medbreak

\item[\textbf{Case 5:}]
Here we have $a_{21}, b_{22}, k>0$. We would like to apply  \cite[Theorem 2.3.3]{Wiggins} of Wiggins. Still the method of proof is similar to the previous cases. Again we look at the rectangles $H_n \subset R_P^\inn$ and their images $\widetilde{H}_n=\Pi_P(H_n) \subset R_P^\out$. In this case, no rectangle $\widetilde{H}_n$, $n>n_0$, intersects the stable manifold of $E$, which is locally contained in the lower part of $\Sigma_P^\out$. The images $\Psi_{PE}(\widetilde{H}_n)$ thus form a sequence of horizontal strips accumulating on the parabola $\Psi_{PE}(W^u_{\loc}(P)\cap \Sigma_P^{\out})$. Under $\Psi_{EP}\circ \Pi_E$ this becomes a sequence of strips in $\Sigma_P^\inn$, bounded by curves with a fold point and accumulating on the cusp curve given through $W^u(P) \subset \Sigma_P^\inn$. Now we get the desired intersection of $R(H_n)$ and $H_n$ if the distance of the fold point from $W^s_\loc(P)$ is less than the height of $H_n$. The latter evolves as in Case 4, and so the strips $H_n$ and $R(H_n)$ intersect if $\delta>1$. Assumptions (1)--(3) in Definition \ref{reg_horseshoe_def} can be checked similarly to the previous cases. The effect of the return map on the strips is illustrated in Figure \ref{horseshoes} (c).
\end{itemize}
\end{proof}

\begin{figure}
\begin{center}
\includegraphics[width=14cm]{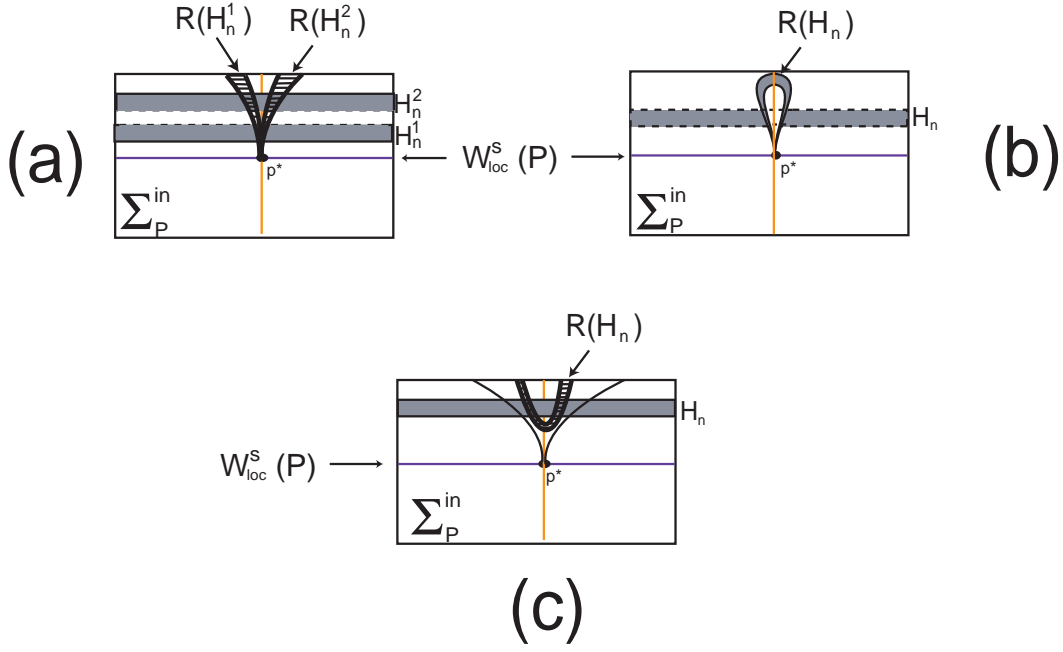}
\end{center}
\caption{\small Shape of the horseshoes that appear in the organizing center. (a) Case 3 of Theorem \ref{main-thm}: $a_{21}>$, $k>0$ and $b_{22}<0$. (b) Case 4 of Theorem \ref{main-thm}: $a_{21}>0$, $k<0$ and $b_{22}<0$.  (c) Case 5 of Theorem \ref{main-thm}: $a_{21}>$, $k>0$ and $b_{22}>0$. Cases (a) and (b) are examples of cusp horseshoes. Case (c) is an example of a regular horseshoe.}
\label{horseshoes}
\end{figure}

In all cases where we observe chaos in the organizing center, the non-trivial dynamics of $R$ is concentrated in the rectangle $R_P^{\inn}$. Let 
\begin{equation}
\label{saturated}
\Omega_{R} = \bigcap_{n \in \ZZ} \overline{R^n(R_P^{\inn})}.
\end{equation} The non-wandering set associated to $\Gamma$ contains the disjoint union of the critical elements $\{E,P, [E \rightarrow P], [P \rightarrow E]_t\}$ and $\Lambda$, where:
\begin{equation}
\label{Lambda}
\Lambda= \Lambda^\infty \cup \Lambda^s \cup \Lambda^u \cup \Lambda^h
\end{equation}
 is the saturation (by the flow) of $\Omega_{R}$, with:
\begin{itemize}
\item $\Lambda^\infty $ is the saturated regular horseshoe, sometimes the so called \emph{basic set};
\item $\Lambda^s$ is the set of points that escape in negative time to the cusp point;
\item $\Lambda^u$ is the set of points that escape in positive time to the cusp point;
\item $\Lambda^h$ keeps track of the trajectory in the unstable manifold of $E$. 
\end{itemize}
In \cite{LP}, the invariant set $\Lambda$ is called a singular horseshoe.
By construction (see (\ref{saturated})), the set $\Lambda$ is the maximal invariant Cantor set contained in $\mathcal{U}$, the neighbourhood of $\Gamma$ chosen at the beginning of the construction. 


\subsection{Hyperbolicity}
From the beginning of Section \ref{organizing_center}, recall that $R = \Psi_{EP}\circ \Pi_E \circ \Psi_{PE} \circ \Pi_P$ is the first return map on the union of strips  $\bigcup_{n\geq n_0}H_n$ (or $\bigcup_{n\geq n_0} H_n^j$ for Cases 3 and 4, where $j \in \{1,2\}$) contained in $R_P^{\inn}$ and centered at the heteroclinic connection. In this section we show that $R$ with the expression given by
\begin{equation*}
\label{first return}
R(\psi, z)=\left[-\left(b_{11}A_n+b_{12}z^{\delta_p}\right)(b_{22}z^{\delta_p}+kA_n^2)^{\delta_2}, (b_{22}z^{\delta_p}+kA_n^2)^{\delta_1}\right] 
\end{equation*}
with 
$$
A_n(\psi, z)=\psi-\frac{1}{\eta_e} \ln z-2n\pi\geq 0,
$$
is uniformly hyperbolic restricted to a compact set disjoint from the line $z=0$ and containing $\Lambda$, which is important to establish the persistence of dynamics.

\begin{theorem}\label{thm-hyp}
Under the conditions of Theorem \ref{main-thm}, there exists a sequence of compact sets within the cross section $R_P^{\inn}$ where the first return map to $R_P^{\inn}$ is uniformly hyperbolic and conjugated to a full shift over a finite number of symbols. This sequence accumulates on the cycle.
\end{theorem}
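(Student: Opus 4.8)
The plan is to promote the topological conjugacy already obtained in Theorem~\ref{main-thm} to genuine uniform hyperbolicity by exhibiting an invariant cone field for the explicit return map $R$, and then to read off the full shift from the resulting locally maximal hyperbolic set. For each sufficiently large $n$ the horseshoe of Theorem~\ref{main-thm} is carried by the strips $H_n^1,H_n^2\subset H_n$ (or by $H_n$ in the regular case), all of which lie at the level $z\asymp\exp(a_n)$. I would therefore work on the compact region $K_n$ given by the maximal $R$-invariant subset of these strips, which is automatically disjoint from $\{z=0\}$ and, in the cusp cases, from a small neighbourhood of the cusp point. The sequence $(K_n)_n$ is the one claimed in the statement: since $z\asymp\exp(a_n)\to 0$, the sets $K_n$ converge in the Hausdorff metric to $W^s(P)\subset\Gamma$, so they accumulate on the cycle.

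The analytic heart is the computation of $DR$ from the displayed formula. Writing $B=b_{22}z^{\delta_P}+kA_n^2$ and using $\partial_\psi A_n=1$, $\partial_z A_n=-\tfrac{1}{\eta_e z}$, every $z$-derivative inherits the factor $1/z$, so the two entries of the second column of $DR$ are of order $1/z$ while the first column stays of order one; hence $\|DR\|\asymp 1/z$. The crucial point is that these large terms cancel in the determinant: a direct computation gives
\begin{equation*}
\det DR=\delta_1\,\delta_P\,\bigl(2kb_{12}A_n-b_{11}b_{22}\bigr)\,z^{\delta_P-1}\,B^{\,\delta_1+\delta_2-1}.
\end{equation*}
Consequently, away from the cusp point (where $A_n\neq 0$ and $B$ is bounded below), the singular values of $DR$ satisfy $\sigma_1\asymp 1/z\to\infty$ and $\sigma_2=|\det DR|/\sigma_1\asymp z^{\delta_P}\to 0$ as $n\to\infty$, so the hyperbolic gap \emph{widens} with $n$. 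Within a single strip $H_n$ the variable $z$ ranges over an interval of bounded ratio $\exp(2\tau\eta_e)$, so all these estimates are uniform on $H_n$.

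With this in hand I would set up the cone field in the standard way: an unstable cone $\mathcal{C}^u$ about the strongly expanded vertical $z$-direction and a stable cone $\mathcal{C}^s$ about the near-horizontal direction, the latter being justified by the fact that the image of $(1,0)$ has size of order one $\ll\sigma_1$, which forces the stable direction to lie within order $z$ of the horizontal. One then checks $DR(\mathcal{C}^u)\subset\mathcal{C}^u$ with expansion bounded below by $c/z>1$ and $DR^{-1}(\mathcal{C}^s)\subset\mathcal{C}^s$ with contraction bounded above by $Cz^{\delta_P}<1$, both uniformly on $K_n$ for $n$ large. This yields uniform hyperbolicity of $R|_{K_n}$. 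The horseshoe geometry --- the Conley--Moser/Deng conditions verified in Theorem~\ref{main-thm}, together with the interchange of horizontal and vertical boundaries from Remark~\ref{boundaries} --- provides a Markov partition by the strips, and the standard symbolic coding of a locally maximal hyperbolic set then conjugates $R|_{K_n}$ to the full shift on the finite alphabet $\{1,2\}$ (more generally, the alphabet of Theorem~\ref{main-thm}).

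The main obstacle is the degeneracy of $DR$ where $B\to 0$, i.e.\ at the cusp point arising in the cusp-horseshoe Cases~3 and 4: there $\det DR\to 0$, the dominant $1/z$ growth in the second column is lost, and the second cone condition breaks down. I would handle this exactly as the geometry dictates, by excising a small neighbourhood of the cusp point and arguing that the locally maximal invariant set $K_n$ \emph{avoids} it --- the orbits falling into the cusp are precisely those accounted for by $\Lambda^s$ and $\Lambda^u$ in \eqref{Lambda}, not by the basic set $\Lambda^\infty$ that carries the shift. The delicate step is to verify that excising the cusp neither disconnects the symbolic dynamics nor spoils the uniform cone bounds, so that the remaining invariant set still realises the full shift on finitely many symbols; once that is in place, letting $n\to\infty$ produces the desired sequence of uniformly hyperbolic compact sets accumulating on $\Gamma$.
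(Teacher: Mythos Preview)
Your approach is essentially the same as the paper's: compute $DR$ explicitly, show that the determinant collapses while the map is strongly expanding in one direction, and feed this into an invariant cone-field argument on the compact strips coming from Lemma~\ref{main_prop}. The paper organizes the derivative estimate slightly differently, recording that $\det DR\asymp z^{\delta_P-1}B^{\delta_1+\delta_2-1}\to 0$ together with $|\tr DR|\to\infty$ as $z\to 0$ (so one eigenvalue is $\approx\tr DR$ and the other $\approx\det DR/\tr DR$), and then invokes the Katok--Hasselblatt cone criterion directly; your singular-value packaging $\sigma_1\asymp 1/z$, $\sigma_2\asymp z^{\delta_P}$ is an equivalent way to read off the same splitting. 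Your explicit discussion of the cusp degeneracy at $B=0$ is in fact more careful than the paper, which simply asserts hyperbolicity on any compact invariant set not meeting $\{z=0\}$ without isolating this point; your observation that the locally maximal set $K_n$ automatically avoids $\{B=0\}$ (since such points land on the cusp, where $R$ is undefined) is exactly the right way to close that gap.
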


\begin{proof}
The existence of the subset $\Omega_R \subset \Sigma_P^{\inn}$ such that the map $R$ is topologically conjugate to a Bernoulli shift follows from Theorem \ref{main-thm}. The sequence in the statement is the one defined in Lemma \ref{main_prop}.  In this computer assisted proof, we assume without loss of generality that $\eta_e=2$, $b_{11}= b_{12}=1$ and that the transition $\Psi_{EP}$ is a rotation by $\pi/2$ (see Section \ref{local}). 

\medbreak
The map $R$ is hyperbolic at the point $(\psi,z)$ if both $R$ and $R^{-1}$ are well defined in the Cantor set $\Omega_R$ -- see (\ref{Lambda}) -- and if there is $\sigma \in (0,1)$ such that in suitable coordinates the maps $\mathrm{D}R(\psi,z)$ and $\mathrm{D}R^{-1}(\psi,z)$ satisfy:
\begin{enumerate}
\item the sector bundle $S^u_\sigma=\{(\psi,z): |\psi|<\sigma|z|\}$ is invariant under $\mathrm{D}R$ and $\mathrm{D}R(S^u_\sigma)\subset S^u_\sigma$;
\item  the sector bundle $S^s_\sigma=\{(\psi,z): |z|<\sigma|\psi|\}$ is invariant under $\mathrm{D}R^{-1}$ and $\mathrm{D}R^{-1}(S^s_\sigma)\subset S^s_\sigma$;
\item $\forall (\psi,z) \in S^u_\sigma$ if $\mathrm{D}R(\psi,z)=(\tilde{\psi},\tilde{z})$ then $|\tilde{z}|\geq \sigma^{-1} |\tilde{z}|$ and
\item $\forall (\psi,z) \in S^s_\sigma$ if $\mathrm{D}R^{-1}(\psi,z)=(\tilde{\psi},\tilde{z})$ then $|\tilde{\psi}|\geq \sigma^{-1} |\tilde{\psi}|$.
\end{enumerate}
Using the software Maple, up to higher order terms, the determinant of the Jabobian matrix $\mathrm{D}R$ evaluated at $(\psi, z)$ can be written as
$$
\det \mathrm{D}R(\psi,z)= k_1 z^{\delta_P -1} g_1^{\delta_1+\delta_2-1} \quad \text{where}\quad k_1 \in \RR^+ \quad \text{and}\quad g_1(\psi,z)= z^{\delta_P}+k\psi^2 -k\psi \ln z + k(\ln z)^2.
$$
Similarly, when $z$ goes to $0$, we obtain for the trace
$$
\tr \ \mathrm{D}R (\psi,z) \approx \left(-\frac{1}{\eta_e z}+\delta z^{\delta_P-1} \right) \left(z^{\delta_P} + A_n^2\right)^{\delta_P}+ \frac{\delta_2}{z} A_n^{2\delta_2+1} -2\delta_1 A_n^{2\delta_1-1}.
$$
It is easy to see that $$\lim_{z \rightarrow 0^+} \det \mathrm{D}R(\psi,z)=0$$ and $$\lim_{z \rightarrow 0^+} |\tr \ \mathrm{D}R(\psi,z)|=\lim_{z \rightarrow 0^+} |\ln z|>\lambda^\star \quad \text{for some} \quad \lambda^\star>2.$$ 

This property allows one to apply the construction of appropriate families of cones as described in \cite{KH95} and so conclude that $\Lambda$ is a hyperbolic invariant set for the first return map $R$. Indeed, the set
$S^u_\sigma =\{(\psi,z)\in \mathbb{R}^2: |\psi|\leq \sigma |z|\} \supset \left\langle (0,1)\right\rangle$
is an unstable cone-field for $\frac{1}{\lambda^\star-1}<\sigma<1$; such a choice is possible because $\lambda^\star>2$. Indeed, if we 
write $\mathrm{D}R(\psi,z)$ as $(\psi',z')$, we see that, for $z$ sufficiently small, we get
$$
|\psi'|=|z|\leq \frac{1}{\lambda^\star-\sigma} \left|\frac{\partial R}{\partial \psi} z-\frac{\partial R}{\partial z} \psi\right| \approx \frac{1}{\lambda^\star-\sigma}|z'| 
$$
so that $\mathrm{D}R(S^u_\sigma)\subset S^u_{\theta \sigma}$ where $\theta=(\sigma(\lambda^\star-\sigma))^{-1}<1$ by the choice of the parameter $\sigma$. That is, $S_\sigma^u$ is $\mathrm{D}R$-invariant. Furthermore, denoting by $\|(\psi,z)\|=\max\{|\psi|,|z|\}$, we get, for any $(\psi,z)\in S^u_\sigma$,
$$
\|\mathrm{D}R(\psi,z)\|=|z'|\geq (\lambda^\star-\sigma)|z| =(\lambda^\star-\sigma)\|(\sigma,z)\|
$$
with $(\lambda^\star-\sigma)>1$, \emph{i.e.}, $\mathrm{D}R$ uniformly expands any vector inside $S^u_\sigma$. On the other hand, it is not hard to see that the
same above argument can be applied to $\mathrm{D}R^{-1}$ in order to find a stable cone-field ($S^s_\sigma  \supset \left\langle (1,0)\right\rangle $). Using the invariant cone-field criterion by Katok and Hasselblatt \cite[Corollary 6.4.8]{KH95}, the proof ends.  Every compact invariant subset of the cross section $\Sigma^{\inn}_P$ containing $\Omega_R$ and not containing the line $z=0$ is uniformly hyperbolic (in particular, this part persists under small generic perturbations).
\end{proof}

\begin{remark}
Using the sequences $\exp(a_n)_{n\geq n_0}, \exp(b_n)_{n\geq n_0}$ derived in Lemma \ref{main_prop},  we may define a sequence of invariant hyperbolic sets on which the related first return map to the set parametrized by $[-\tau, \tau] \times [\exp(a_n), \tau]$  is conjugated to the symbolic system $(Y_n, \sigma)$, where the number of symbols increases with $n$. It means that the topological entropy is increasing with $n$.
\end{remark}

\begin{remark}
Notice that the invariant set $\Lambda$ (see (\ref{saturated}) and (\ref{Lambda})) is not compact since it accumulates on $W^s(P)\cap W^u(E)$, where the first return map $R$ is not defined. If $\Lambda$ were uniformly hyperbolic with respect to the flow, this hyperbolicity would uniformly spread to the solutions in the closure of $\Lambda$, which is impossible due to the presence of the equilibrium $E$.
\end{remark}

\begin{remark}
As the diffeomorphism $R$ is (at least) $C^2$ and $\Lambda$ is a countable union of uniformly hyperbolic horseshoes, the two-dimensional Lebesgue measure of $\Lambda$ is zero. Additionally, the union of $W^s(\Lambda)$, $W^u(\Lambda)$, $W^s(E)$, $W^s(P)$, $W^u(E)$ and $W^u(P)$ has zero three-dimensional Lebesgue measure.
\end{remark}

\begin{remark}
The horseshoes encoded in $\Lambda$ are related to the heteroclinic tangles. The information given by the proof of Theorems \ref{main-thm} and \ref{thm-hyp}  suggests that $W^s(P) = W^s(\Lambda)$ and $W^{cu}(E) = W^u(\Lambda)$. The role of these manifolds in the overall topological structure of the horseshoe remains future work.
\end{remark}

\section{Global bifurcations: multipulses}\label{sec-multipulses}
In this section we study the creation and disappearance of a suspended horseshoe through sequences of homo- and heteroclinic bifurcations. In particular, we investigate the existence of multipulse $E$- and $P$-homoclinic trajectories, \emph{i.e.}\ solutions bi-asymptotic to $E$ or $P$ that pass more than once around the heteroclinic cycle (or its remnants). In \cite[\textbf{(Q1)}]{Knobloch_open} these multipulses are described as ``\emph{two or more copies of the localized states}''. 

From now on we concentrate mainly on the situations in which there is chaos in the organizing center, \emph{i.e.}\ Cases 3--5 in Theorem \ref{main-thm}. If a heteroclinic cycle exists for $(\alpha, \beta)$ in the parameter space, we denote it by $\Gamma_{(\alpha, \beta)}$. For $(\alpha, \beta)$ close to $(0,0)$ we fix a sufficiently small neighbourhood $\mathcal{U} \subset M$ of the original cycle $\Gamma_{(0,0)}$, that contains the neighbourhoods $V_E$ of $E$ and $V_P$ of $P$ from before. This allows us to define multipulse homoclinic cycles also for parameters $(\alpha, \beta)$ for which there is no heteroclinic cycle: we say that a trajectory starting in $V_E$ ($V_P$) \emph{takes a turn around the (remnant of the) heteroclinic cycle}, if it leaves $V_E$ ($V_P$), then passes through $V_P$ ($V_E$) and comes back to $V_E$ ($V_P$) without leaving $\mathcal{U}$ in between.

\subsection{Counting sections and notation}
In order to give a precise meaning to the phrase \emph{pass around the periodic solution $P$}, we recall the following concept of counting sections for periodic solutions from Rodrigues \emph{et al.}\ \cite{RodLabAgu_2011}. See also the concept of \emph{winding number} of Rademacher \cite{Rad_PhD}. In a neighbourhood $V_P$ of $P$ as before, consider a codimension one submanifold  $C \subset V_P\subset M$ with boundary, such that
\begin{itemize}
\item the flow is transverse to $C$;
\item $C$ intersects $\partial V_P$ transversely;
\item $W^s_{\loc}(P) \subset \partial C$ and $W^u_{\loc}(C) \subset \partial C$.
\end{itemize}
We call $C$ a \emph{counting section}. We are interested in trajectories that enter the neighbourhood $V_P$ in positive time and hit the counting section $C$ a finite number of times (which can be zero) before they leave $V_P$ again. Every time the trajectory makes a turn around $P$ inside $V_P$, it hits the counting section. This behaviour is illustrated in Figure \ref{counting-section}. It is natural to have the following definition, where $\Int(A)$ is the topological interior of $A \subset M$:

\begin{definition}
Let $V_P$ be {an isolating block for} $P$, and $C$ a counting section as defined above. Let $q \in \partial V_P$ be a point such that
\begin{equation*}
\exists \tau>0: \ \forall t \in (0,\tau): \ \varphi(t,q) \in \Int(V_P), \quad \text{and} \quad  \varphi(\tau,q) \in \partial V_P.
\end{equation*}
For $n \in \NN$ the trajectory  of $q$ \emph{turns $n$ times around $P$  in $V_P$, relative to $C$} if
\begin{equation*}
\#\left(\{\varphi(t,q) : \ t\in[0,\tau]\}\cap C \right)= n\geq 0 \ .
\end{equation*}
\end{definition}
Considering our previous notation we use $\CC=\left\{ (0,\rho,z) \in V_P : \ 0\leq \rho,z \leq 1 \right\} \subset \mathcal{U}$ as our counting section. With respect to $\CC$ we now define multipulse homoclinic cycles and heteroclinic connections.

\begin{figure}[ht]
\begin{center}
\includegraphics[width=7cm]{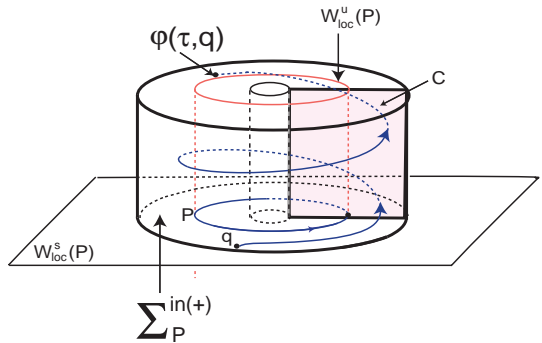}
\end{center}
\caption{\small Example of a trajectory turning twice around the periodic solution $P$ with respect to the cross section $C$.}
\label{counting-section}
\end{figure}

\begin{definition}\label{def-multipulses}
Let $k, n_1, n_2, ..., n_k \in \NN$.
Denote by
\begin{enumerate}
\item $\Hom_{n_1, \ldots, n_k}(E)$ the set of points $(\alpha, \beta)$ in the parameter space such that the flow of (\ref{general}) has a homoclinic cycle associated to $E$, taking $k$ turns around the (remnant of the) heteroclinic cycle, turning $n_j$ times around $P$ in the $j$-th round, for $j\in \{1,\ldots,k \}$.
\medbreak
\item $\Het_{n_1, \ldots, n_k}(PE)$ the set of points $(\alpha, \beta)$ in the parameter space such that the flow of (\ref{general}) has a tangent heteroclinic connection from $P$ to $E$, taking $k$ turns around the (remnant of the) heteroclinic cycle, turning $n_j$ times around $P$ in the $j$-th round, for $j \in \{1,\ldots,k \}$.
\medbreak
\item $\Hom_{n_1,\ldots,n_{k-1}}(P)$ the set of points $(\alpha, \beta)$ in the parameter space such that the flow of (\ref{general}) has a $P$-homoclinic tangency taking $k$ turns around the (remnant of the) heteroclinic cycle. For $t$ bounded appropriately, such a trajectory visits $V_P$ precisely $k-1$ times, turning $n_j$ times around $P$ in the $j$-th visit, for $j \in\{1, \ldots ,k-1 \}$. For a $P$-homoclinic solution taking just one turn around the (remnant of the) cycle, we simply write $\Hom(P)$.
\end{enumerate}
\end{definition}

For $a_{21}>0$, the two curves
$$
\left\{
\begin{array}{l}
\beta=-b_{22}\alpha^{\delta_P} \\
\alpha\geq 0
\end{array}
\right.
\qquad
\text{and}
\qquad
\left\{
\begin{array}{l}
\beta=\tilde{k} \alpha^{\frac{1}{\delta_1}}\\
\tilde{k} \in \RR^+, \alpha\leq 0
\end{array}
\right.
$$
divide the bifurcation diagram $(\alpha, \beta)$ in two connected components; one will be called \emph{chaotic region} and the other \emph{regular region}. The chaotic region corresponds to that for which we observe chaos for values in the half $\beta$-axis that is contained on it. A geometrical interpretation of these curves will be given in Theorems \ref{thm-E-hom} and \ref{thm-P-hom}. These regions are depicted in Figure \ref{multipulses}. 

\subsection{Homoclinic cycles to $E$}
In the next result, we show the existence of infinitely many curves in the parameter space for which we observe homoclinic trajectories of $E$ taking more and more turns around the original cycle (or its remnants). The proof of item (1) is based on \cite[Section 3.2]{Champneys2009}. The existence of periodic solutions (but not their stability) as in statement (5) has also been derived in \cite[Section 6.1]{Champneys2009}. For alternative proofs of items (1) and (2) using Lin's method see also \cite{KnobRiez2010, Knob_JDDE_2011}. However, this method is unable to make stability statements.

\begin{theorem}\label{thm-E-hom}
Consider a vector field $f_0$ satisfying $\textbf{(H1)}-\textbf{(H5)}$, with $a_{21}>0$ (Cases 2--5). Then, the dynamics of a generic two-parameter family of $C^\infty$ vector fields $f (x,\alpha,\beta)$ unfolding $f_0$, for $\alpha,\beta\neq 0$ sufficiently small, satisfy:
\begin{enumerate}
\item there are infinitely many curves $\Hom_{n}(E)$ where $n \geq n_1 \in \NN$, whose tip points lie on $\beta=-b_{22}\alpha^{\delta_P}$ with $\alpha>0$.
\item the curves $\Hom_{n}(E)$ accumulate on the non-positive $\beta$-axis if $k>0$ and on the non-negative $\beta$-axis if $k<0$.
\item in Cases 2 and 5, for each $n \geq n_1 \in \NN$, there are two curves $\Hom_{n, n}(E)$ on the left side of the curve $\Hom_{n}(E)$ whose tip points coincide with the tip point of $\Hom_{n}(E)$. Moreover, for $m>n$ each curve $\Hom_{m}(E)$ bifurcates into two curves $\Hom_{n,m}(E)$ at each intersection point between $\Hom_{m}(E)$ and $\beta=\beta_\star$, where $(\alpha_\star, \beta_\star)$ are the coordinates of the fold point of $\Hom_{n}(E)$. 
\item in Cases 3 and 4, for $m \geq n \geq n_1$ there are curves $\Hom_{n,m}(E)$ bifurcating from $\Hom_{m}(E)$ at its tip point.
\item bifurcating from each of the curves in (1), (2) and (4) there are periodic saddles or sources depending on the condition $\lambda_1+\lambda_2 \gtrless \mu$. These periodic solutions occur on the left side of the curves, i.e.\ they are created for decreasing values of $\alpha$. Their periods diverge to $+\infty$ when they approach the curve they bifurcate from.
\end{enumerate}
\end{theorem}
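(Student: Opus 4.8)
The plan is to reduce everything to a single \emph{homoclinic bifurcation equation} obtained by tracking the one-dimensional unstable manifold $W^u(E)$ through the return maps and demanding that it land on $W^s_\loc(E)$. The point $W^u(E)\cap\Sigma_E^{\out}=(0,0)$ is carried by $\Psi_{EP}$ to the tip $q_\alpha=(\alpha\xi_1,\alpha)\in\Sigma_P^{\inn}$, then by $\Pi_P$ to $\Sigma_P^{\out}$, and finally by $\Psi_{PE}$ to $\Sigma_E^{\inn}$, where the $z$-component must vanish (since $W^s_\loc(E)\cap\Sigma_E^{\inn}=\{z=0\}$); the requirement $\alpha>0$ is forced because $\Pi_P$ only acts on $z>0$. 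Writing the local angular coordinate on the $n$-th sheet as $\psi_n(\alpha)=\alpha\xi_1-\frac{1}{\eta_e}\ln\alpha-2n\pi\in[-\tau,\tau]$, the condition becomes
$$ b_{22}\,\alpha^{\delta_P}+k\,\psi_n(\alpha)^2+\beta\bigl(1+\nu\,\psi_n(\alpha)\bigr)=0. $$
For each $n$ this is one relation between $\alpha$ and $\beta$, hence the curve $\Hom_n(E)$; the infinitude of curves comes straight from $\ln\alpha\to-\infty$ as $\alpha\to0^+$, which makes $\psi_n$ sweep $[-\tau,\tau]$ along a sequence $\alpha_n\to0^+$. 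Setting $\psi_n=0$ (the vertex of the short, almost vertical arc) gives the tip condition $\beta=-b_{22}\alpha^{\delta_P}$ with $\alpha>0$, which is item (1). All computations are carried out with the model maps, their conclusions transferring to a neighbourhood of $\Gamma$ by the $C^2$-conjugacy noted in the introduction.

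For item (2) I would analyse this same family as $n\to\infty$. To leading order $\beta(\psi_n)\approx-b_{22}\alpha_n^{\delta_P}-k\psi_n^2$, so each arc sweeps a $\beta$-interval of fixed length of order $|k|\tau^2$ lying on the non-positive (if $k>0$) or non-negative (if $k<0$) side of its vertex, the $b_{22}$-term only shifting the vertex by $O(\alpha_n^{\delta_P})\to0$. Since the vertices tend to the origin, the union of arcs accumulates on the corresponding half of the $\beta$-axis. The monotonicity and sign bookkeeping here are routine once the displayed equation is in hand.

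The multipulse statements (3) and (4) require composing one more loop of the return map $R$. A two-pulse orbit corresponding to $\Hom_{n,m}(E)$ is one whose first passage lands not on $W^s_\loc(E)$ but in $\Sigma_E^{\inn(+)}$, is carried by $\Pi_E$ to $\Sigma_E^{\out}$, and only after a second loop (winding $m$ around $P$) meets $W^s(E)$. The geometric engine is Proposition \ref{dyn_E_proposition} together with the helix description of $(\Psi_{PE}\circ\Pi_P)^{-1}(W^s(E)\cap\Sigma_E^{\inn})$ from Lemmas \ref{main_prop} and \ref{doublehelix}: the image of the arc $\Hom_n(E)$ under $R$ re-enters $\Sigma_P^{\inn}$ as a curve bounded by a fold point (regular case) or a cusp point (cusp case), and $\Hom_{n,m}(E)$ is located where this curve crosses the $m$-th strand of the pulled-back $W^s(E)$. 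I expect the case split to fall out of Proposition \ref{dyn_E_proposition}: in the regular-horseshoe situation (Case 5) and the analogous Case 2 the returning curve has a genuine fold point with well-defined tangent (item (4) of that proposition), so two branches $\Hom_{n,n}(E)$ detach with the \emph{same} tip as $\Hom_n(E)$, and for $m>n$ the curve $\Hom_m(E)$ splits into $\Hom_{n,m}(E)$ exactly where it crosses the fold level $\beta=\beta_\star$; in the cusp-horseshoe situations (Cases 3 and 4) the returning curve carries a cusp, so the branches $\Hom_{n,m}(E)$ instead emanate from the tip of $\Hom_m(E)$.

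Item (5) is a homoclinic-bifurcation count: along each curve of (1), (2), (4) I would set up the return map on a thin strip shadowing the homoclinic orbit and solve the fixed-point equation for a nearby periodic orbit, which exists on the side where the split image strip re-intersects its domain, i.e.\ for decreasing $\alpha$. Because such an orbit must follow the passage near $E$, the time spent near the saddle behaves like $-\mu^{-1}\ln(\mathrm{dist}\ \text{to}\ W^s)$, diverging as the parameter approaches the homoclinic curve and forcing the period to blow up. The saddle-versus-source dichotomy I would read off from the leading multiplier of this return map, whose modulus is controlled by the sign of the divergence $\mu-\lambda_1-\lambda_2$ at $E$: for $\mu>\lambda_1+\lambda_2$ the linearised return expands volume and the periodic orbit is a repeller, for $\mu<\lambda_1+\lambda_2$ it is a saddle. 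The main obstacle is items (3)--(4): turning the heuristic ``image arc meets the $m$-th helix strand'' into a quantitative tangency/transversality statement that produces exactly the asserted coincidence of tips and branch counts, uniformly in $(\alpha,\beta)$ and for all large $n,m$, while correctly separating the fold and cusp geometries of Proposition \ref{dyn_E_proposition}.
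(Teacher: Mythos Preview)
Your treatment of items (1), (2) and (5) follows the paper almost verbatim: the same composition $\Psi_{PE}\circ\Pi_P\circ\Psi_{EP}$ applied to $(0,0)\in\Sigma_E^{\out}$, the same quadratic in the sheet variable $A_n=\psi_n$, the same reading of the sign of $k$ from the discriminant, and for (5) the same divergence criterion (the paper simply cites Wiggins' Theorem 3.2.12 rather than rederiving it). One small point: the tip is where the discriminant vanishes, giving $A_n=-\nu\beta/(2k)$ rather than exactly $\psi_n=0$; to leading order these agree, but you should say so. For the side on which the periodic solutions appear, the paper gives a more concrete argument than your ``re-intersection'' heuristic: it tracks the point $W^u(E)\cap\Sigma_P^{\inn}$ across the double helix and invokes the cylinder/M\"obius-band dichotomy for the bifurcating periodic orbit from Shilnikov \emph{et al.}, which is what actually pins down ``decreasing $\alpha$''.

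The genuine gap is in your mechanism for the case split in (3)--(4). You propose to distinguish Cases 2,5 from Cases 3,4 via the \emph{forward} image of $W^u(P)$ through $\Pi_E$, appealing to item (4) of Proposition~\ref{dyn_E_proposition} for Cases 2 and 5. That assignment is inverted: near the tip of $\Hom_n(E)$ one has $\beta<0$ in Case~5 (since $b_{22}>0$), so $W^u(P)\cap\Sigma_E^{\inn}$ is of type $\mathcal P^+_\beta$ with $\beta<0$ and item~(3) of the proposition (a disconnected cuspidal curve) applies, not item~(4); conversely, in Case~3 one has $\beta>0$ near the tip and item~(4) (fold with well-defined tangent) is the relevant one. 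More importantly, this is not the distinction the paper uses. The paper works \emph{backwards}: it pulls $W^s_{\loc}(E)$ through $\Psi_{PE}^{-1}$ to a parabola in $\Sigma_P^{\out}$ and then through $\Pi_P^{-1}$ to $\Sigma_P^{\inn}$. The dichotomy is whether that parabola meets $W^u_{\loc}(P)=\{\rho=0\}$ (Cases 2 and 5: the pre-image is a double helix accumulating on $W^s_{\loc}(P)$, so as $\alpha$ decreases past $\alpha_\star$ the point $W^u(E)\cap\Sigma_P^{\inn}$ crosses an \emph{extra} helix obtained by one further backward iterate, producing the two branches $\Hom_{n,m}(E)$ at the level $\beta=\beta_\star$) or stays entirely in $\rho>0$ (Cases 3 and 4: the pre-image is a single fold curve, and the subsidiary branches emanate from the tip of $\Hom_m(E)$). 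Your phrase ``image of the arc $\Hom_n(E)$ under $R$'' conflates parameter-space and phase-space curves; what is actually tracked is a single point (the tip of $W^u(E)$ in $\Sigma_P^{\inn}$) against the iterated pre-images of $W^s(E)$. Once you reorganise (3)--(4) around this backward pull-back of $W^s(E)$, the helix-versus-fold dichotomy gives the asserted branching structure directly.
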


\begin{proof}
\begin{enumerate}
\item
For $(x,y) \in \Sigma_E^{\out}$ we have
$$\Pi_P \circ \Psi_{EP}(x,y)=\left(a_{11}x+a_{12}y+ \alpha \xi_1 -\frac{1}{\eta_e}\ln(a_{21}x+a_{22}y+\alpha), \ (a_{21}x+a_{22}y+\alpha)^{\delta_P}\right) \in \Sigma_P^{\out},$$
which is mapped under $\Psi_{PE}$ to a point in $\Sigma_E^{\inn}$ whose $z$-coordinate is given by
$$
b_{22} \rho+ k \psi^2 + \beta(1+\nu \psi)=0, \qquad \text{where} \qquad 
\left\{ 
\begin{array}{l}
\psi=a_{11}x+a_{12}y+ \alpha \xi_1 -\frac{1}{\eta_e}\ln(a_{21}x+a_{22}y+\alpha) \\ 
\rho=(a_{21}x+a_{22}y+\alpha)^{\delta_P}
\end{array}.
\right.
$$

Since we are looking for homoclinic cycles to $E$ we have to check when the origin $(0,0)\in \Sigma_E^{\out}$ falls on the line $z=0$ in $\Sigma_E^\inn$. Note that this can only happen if $\alpha > 0$, since otherwise $W^s_\loc(E) \cap \Sigma_E^\out$ is mapped into the lower part of $\Sigma_P^\inn$ and thus leaves $\mathcal{U}$. Therefore, with $x=y=0$, we get:
\begin{equation}
\label{eq0}
b_{22}\alpha^{\delta_P} + k \left(\alpha \xi_1 -\frac{1}{\eta_e}\ln \alpha \right)^2 + \beta \left(1+\nu \left(\alpha \xi_1 -\frac{1}{\eta_e}\ln \alpha \right) \right) =0.
\end{equation}

Let $A_n = \alpha \xi_1 -\frac{1}{\eta_e}\ln \alpha-2n\pi\geq0$. Then, condition (\ref{eq0}) is equivalent to the  following equation of order 2 in the variable $A_n$:
\begin{equation}
\label{eq1}
 k A_n^2  + \nu \beta A_n + \left(b_{22} \alpha^{\delta_P}+\beta \right)=0.
\end{equation}
Solutions of (\ref{eq1}) to leading order can be written as:
 \begin{eqnarray*}
 A_n=\frac{-\nu \beta\pm \sqrt{(\nu\beta)^2-4k(b_{22}\alpha^{\delta_P}+\beta)}}{2k}
& \Leftrightarrow & A_n=- \frac{\nu\beta}{2k} \pm \sqrt{\frac{(\nu \beta)^2}{4k^2}-\frac{1}{k}{\left(b_{22}\alpha^{\delta_P}+\beta\right)}}\\
& \Leftrightarrow & A_n= - \frac{\nu\beta}{2k} \pm  \sqrt{\frac{1}{k}{\left(-b_{22}\alpha^{\delta_P}-\beta\right)}}
\end{eqnarray*}

Recalling the definition of $A_n$, we may write:
$$
\alpha \xi -\frac{1}{\eta_e} \ln\alpha -2n\pi = -\frac{\nu\beta}{2k} \pm  \sqrt{\frac{1}{k}{(-b_{22}\alpha^{\delta_P}-\beta)}}.
$$
Since $\alpha \xi_1 -\frac{1}{\eta_e} \ln\alpha\approx -\frac{1}{\eta_e} \ln\alpha$ when $\alpha \approx 0$, and thus
\begin{equation}\label{eq2}
-\frac{1}{\eta_e}\ln\alpha \approx 2n\pi -\frac{\nu \beta}{2k}\pm \sqrt{\frac{1}{k}{(-b_{22}\alpha^{\delta_P}-\beta)}},
\end{equation}
where $n\geq n_1$ is the number of times the homoclinic cycle winds around $P$ according to Definition \ref{def-multipulses} and $n_1$ is the smallest natural number such that $A_{n_1}>0$. Then homoclinic cycles to $E$ are created along the curve that is to first order given by $\beta=-b_{22}\alpha^{\delta_P}$. The fold point where the curve $\Hom_n(E)$ is created is called the tip point of the curve.
\medbreak

\item From the expression under the square root in equation (\ref{eq2}) it is clear that the sign of $k$ determines on which side of the curve $\beta=-b_{22}\alpha^{\delta_P}$ the $E$-homoclinics appear: if $k>0$, then there are double roots for $\beta<-b_{22}\alpha^{\delta_P}$, so they occur in the lower side. If $k<0$, double roots exist for $\beta>-b_{22}\alpha^{\delta_P}$ and the homoclinics occur in the upper side. Thus, the curves $\Hom_n(E)$ form a family of equally oriented parabolas\footnote{This term is an abuse of language; in general the curves do not need to be parabolas since $\delta_P$ may be different from $2$.} with fold points on $\beta=-b_{22}\alpha^{\delta_P}$. It follows from (\ref{eq2}) that for positive $\alpha \to 0$ we have $n \to \infty$ and thus infinitely curves corresponding to $E$-homoclinics occur.
\medbreak

\item We give a proof for Case 5, where $k, a_{21},b_{22}>0$. We suggest the reader follows this proof observing Figure \ref{double_loop}.  For $n\geq n_1$, let $(\alpha_\star,\beta_\star) \in \Hom_{n}(E)$ the tip point of $ \Hom_{n}(E)$, so $\alpha_\star>0>\beta_\star$. For a fixed $\beta\leq \beta_\star<0$, let $(\alpha, \beta)  \neq (\alpha_\star,\beta_\star) $ such that $\alpha<\alpha_\star$. 
\medbreak
The set  $\Psi_{PE}^{-1}(W^s_{\loc}(E)\cap \Sigma_E^{\inn})$ is a parabola in $\Sigma_P^\out$, opened below and with its vertex in $\Sigma_P^{\out(+)}$. By Lemma \ref{helix} (hypothesis (2) holds), its pre-image in $\Sigma_P^\inn$ under $\Pi_P$ is a double helix accumulating on $W^s_{\loc}(P)\cap \Sigma_P^\inn$. 
Since $(\alpha, \beta)  \neq (\alpha_\star,\beta_\star) $ and $\alpha<\alpha_\star$, there is at least one piece of line within the set defined by $(\Pi_P^{-1} \circ \Psi_{PE}^{-1})(W^s(E))$ crossing transversely the cuspidal line corresponding to $(\Psi_{EP}\circ \Pi_E\circ \Psi_{PE}) (W^u(P))$ whose cusp point corresponds to $W^u(E)$.  This piece of line contained in $(\Pi_P^{-1} \circ \Psi_{PE}^{-1})(W^s(E))$ is mapped by  $\Psi_{EP}^{-1}$ into a segment crossing transversely $E^{cu}$ in $\Sigma_E^\out$. By continuity, its pre-image under $\Pi_E$ is arbitrarily close (for $\alpha \to \alpha_\star$) to $W^s_{\loc}(E)\cap \Sigma_E^\inn$ and thus to the original parabola $\Psi_{PE}^{-1}(W^s_{\loc}(E)\cap \Sigma_E^{\inn})$ in $\Sigma_P^\out$. This means that in $\Sigma_P^\inn$ there is at least one extra helix whose fold point is arbitrarily close to the original one, as illustrated in Figure \ref{double_loop}. 
\begin{figure}[ht]
\begin{center}
\includegraphics[width=15cm]{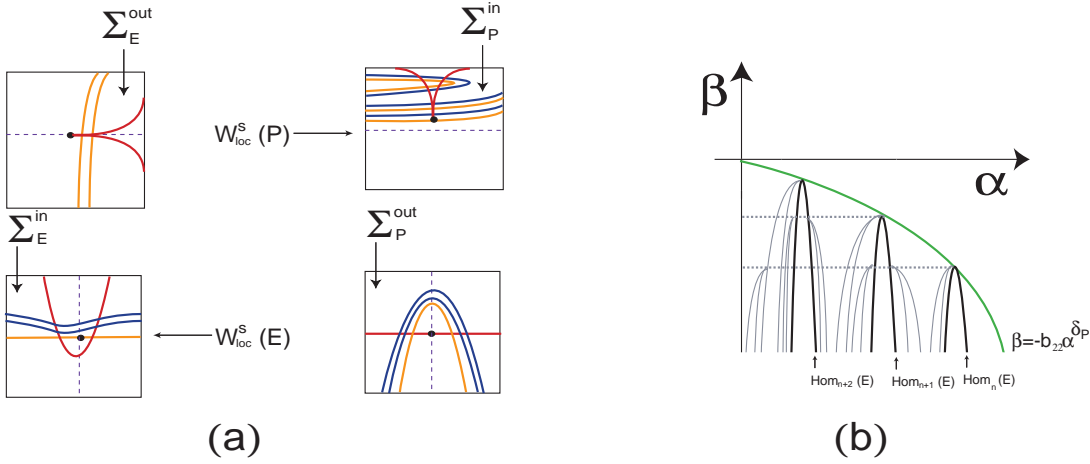}
\end{center}
\caption{\small Case 5: Illustration of the existence of infinitely many curves $\Hom_{n_2, n_1}(E)$ in the phase space (a) and the bifurcation diagram (b). }
\label{double_loop}
\end{figure}

 Therefore, for $\beta<\beta_\star$ fixed and decreasing $\alpha<\alpha_\star$, each point of $\Hom_{n}(E)$ is accompanied by an extra point. Extra helices appear when the cusp point in $\Sigma_P^\inn$ moves vertically in the lower direction, cutting $(\Pi_P^{-1} \circ \Psi_{PE}^{-1})(W^s(E))$. 
Since these curves vary continuously, for $\alpha \rightarrow \alpha_\star^-$  and $\beta \rightarrow \beta_\star^-$, we find a finite number of curves $\Hom_{n,m}(E)$ bifurcating from $\Hom_{m}(E)$, corresponding to homoclinics of $E$ that take two turns around the remnant of the heteroclinic cycle, with $m \geq n \geq n_1$. By construction the bifurcation occurs at the intersection of $\Hom_m(E)$ and $\beta=\beta_*$.  When $m=n$, we can see the extra homoclinic curves to $E$ only on the left side. An illustration of this situation may be found in 
Figures \ref{double_loop} and \ref{multipulses}. 

The proof for Case 2 is completely analogous (with $\beta>0$), except that the subsidiary curves bifurcate on the inside of the original curve $\Hom_n(E)$. 
\medbreak
\item For Cases 3 and 4, the shape of $W^s(E)$ in $\Sigma_P^\out$ is different from the previous item: it does not intersect $W^u_\loc(P)$, but is a parabola opened above and completely contained in the upper half of $\Sigma_P^\out$.

We give a proof for Case 3, where $k, a_{21}>0$ and $b_{22}<0$. Let $(\alpha_\star,\beta_\star) \in \Hom_{m}(E)$ be the tip point of $ \Hom_{m}(E)$ for $m \geq n_1$, so $\alpha_\star,\beta_\star>0$. For a fixed $\beta>0$, let $\alpha > \alpha_\star$. 
The set  $\Psi_{PE}^{-1}(W^s_{\loc}(E)\cap \Sigma_E^{\inn})$ is a parabola in $\Sigma_P^\out$, opened above and with its vertex in $\Sigma_P^{\out(+)}$. Its pre-image in $\Sigma_P^\inn$ under $\Pi_P$ is a curve with a fold point. Since $(\alpha, \beta)  \neq (\alpha_\star,\beta_\star) $ such that $\alpha < \alpha_\star$, there is at least one piece of line within the set $(\Pi_P^{-1} \circ \Psi_{PE}^{-1})(W^s(E))$ arbitrarily close to the point corresponding to $W^u(E)$. This piece of line contained in $(\Pi_P^{-1} \circ \Psi_{PE}^{-1})(W^s_\loc(E))$ is mapped by  $\Psi_{EP}^{-1}$ into a segment crossing transversely $E^{cu}$ in $\Sigma_E^\out$ and arbitrarily close to $W^u(E)\cap \Sigma_E^{\out}$. By continuity, its pre-image under $\Pi_E$ is close to $W^s_{\loc}(E)\cap \Sigma_E^\inn$ and thus to the original parabola $\Psi_{PE}^{-1}(W^s_{\loc}(E)\cap \Sigma_E^{\inn})$ in $\Sigma_P^\out$. This means that in $\Sigma_P^\inn$ there is at least one extra curve whose fold point is close to the original one.  Therefore, for $\beta>\beta_\star$ fixed, each point of $\Hom_{m}(E)$ is accompanied externally by an extra point.

\medbreak

\item In this proof we follow \cite[Theorem 3.2.12]{Wiggins}. Translated into our terminology it states that when an $E$-homoclinic cycle is broken, a periodic solution is created and it is 
\begin{itemize}
 \item a sink if $\lambda_1>\mu$ and $\lambda_2>\mu$;
 \item a saddle if $\lambda_1+\lambda_2>\mu$ and ($\lambda_1<\mu$ or $\lambda_2<\mu$);
 \item a source if $\lambda_1+\lambda_2<\mu$.
\end{itemize}
By \textbf{(H1)} we have $\lambda_1<\mu$, so the periodic solution cannot be a sink. Therefore, it is a saddle if $\lambda_1+\lambda_2>\mu$, and a source otherwise.
\medbreak
Now we have to determine on which side of the curves the periodic solutions appear. We argue for Case 5, where $a_{21}, b_{22}, k >0$, the other cases are similar. 
For $\alpha>0>\beta$, the set $\Pi_P^{-1}(W^s(E)\cap \Sigma_P^{\out})$ in $\Sigma_P^\inn$ is a double helix. For a fixed $\beta<0$, when $\alpha \rightarrow 0$, the point $W^u(E) \cap \Sigma_P^\inn$ intersects the helix infinitely many times, as illustrated in Figure \ref{periodic_solution}. Everything inside the double helix is mapped below $W^s(E)$ in $\Sigma_P^\out$ and thus leaves the neighbourhood $\mathcal{U}$, while points outside the double helix follow the heteroclinic cycle. 
So when $\alpha>0$ changes, the following happens to the point $W^u(E) \cap \Sigma_P^\inn$ -- see Figure \ref{periodic_solution}:
\medbreak
\begin{figure}[ht]
\begin{center}
\includegraphics[width=7cm]{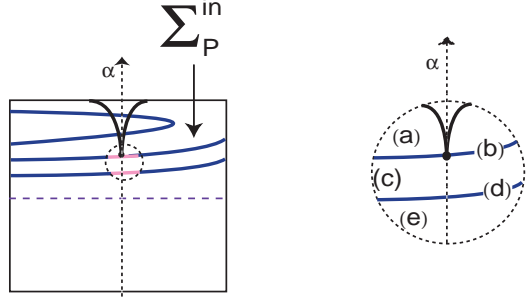}
\end{center}
\caption{\small Intersections of the helix $\Pi_P^{-1}(W^s(E)\cap \Sigma_P^{\out})$ with $W^u(E)$ in $\Sigma_P^\inn$, when $\alpha \to 0$. The $E$-homoclinics observed in (b) and (d) differ in the number of half twists of $W^s(E)$ along the cycle.}
\label{periodic_solution}
\end{figure}
\begin{itemize}
 \item[(a)] it falls outside the region bounded by the double helix $W^s(E)$, \emph{i.e.}\ it follows the cycle;
 \item[(b)] it falls onto the double helix $W^s(E)$, and an $E$-homoclinic cycle is created;
 \item[(c)] it is inside the region bounded by the double helix $W^s(E)$, \emph{i.e.}\ it leaves $\mathcal{U}$;
 \item[(d)] it falls onto the double helix $W^s(E)$, and another $E$-homoclinic cycle is created;
 \item[(e)] it falls outside the region bounded by the double helix $W^s(E)$, \emph{i.e.}\ it follows the cycle.
\end{itemize}
\medbreak
We argue that a periodic solution appears in regions (c) and (e), \emph{i.e.}\ for decreasing values of $\alpha$. This follows from the considerations in \cite{Shilnikov_book}, as we proceed to explain:
\begin{itemize}
\item the saddle value of $E$ is positive;
\item the $E$-homoclinic is not contained in the strong stable manifold of $E$;
\item with exception to the tip point, the center-unstable manifold of $E$ is transverse to $W^s(E)$.
\end{itemize}
\medbreak
Thus, when $\alpha \rightarrow 0$, a single periodic solution is born for consecutive values of $\alpha$ for which we observe an $E$-homoclinic. With the exception of the fold point, at which we have an inclination flip bifurcation, the subsequent boundaries that $W^u(E)$ passes through, belong to the same $n \in \NN$, in the sense that the corresponding homoclinic cycle takes $n$ turns around $P$. They differ by a half twist of $W^s(E)$ along the $E$-homoclinic, the first one falls into the cylinder case, the second one into the M\"obius band case. That is, in the former case the two-dimensional unstable manifold of the emergent periodic solution is diffeomorphic to a cylinder, while in the latter it is a non-orientable surface. Figure 3.2.19 in \cite{Wiggins} illustrates how this proves our claim.
\end{enumerate}
\end{proof}

We remark that iterating the arguments in (3) and (4) leads to the following result:
%
\begin{corollary}
Under the hypotheses of Theorem \ref{thm-E-hom}, for Cases 2--5, there are infinitely many curves $\Hom_{m_1, \ldots ,m_k}(E)$ in the parameter space for any $k \in \NN$ and $m_k>m_{k-1}>\ldots >m_1 \geq n_1 \in \NN$. In Cases 3 and 4, their tip points coincide with the tips of $\Hom_{m_1}(E)$. In Cases 2 and 5, each curve $\Hom_{m_1, \ldots ,m_k}(E)$ bifurcates into two curves at each intersection point between $\Hom_{m_1, \ldots ,m_k}(E)$ and $\beta=\beta_\star$ where $\beta_\star$ is the fold point of $\Hom_{m}(E)$ for all $m< m_1$. 
\end{corollary}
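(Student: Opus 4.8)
The plan is to argue by induction on the number of turns $k$, reading the one-step constructions of Theorem \ref{thm-E-hom}(3) and (4) as a self-reproducing mechanism. The base case $k=1$ is exactly Theorem \ref{thm-E-hom}(1)--(2), which furnishes the curves $\Hom_{m_1}(E)$ for $m_1\ge n_1$. For the inductive step I would reformulate the existence of a $k$-pulse $E$-homoclinic as the condition that the moving point $W^u(E)\cap\Sigma_P^\inn$ (which depends on $(\alpha,\beta)$) lie on the set $\mathcal{S}_k:=R^{-1}(\mathcal{S}_{k-1})\subset\Sigma_P^\inn$, where $\mathcal{S}_1=(\Pi_P^{-1}\circ\Psi_{PE}^{-1})(W^s_\loc(E)\cap\Sigma_E^{\inn(+)})$ and $R=\Psi_{EP}\circ\Pi_E\circ\Psi_{PE}\circ\Pi_P$ is the return map of Section \ref{organizing_center}. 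The geometric heart of the argument is then a single reproduction claim: if $\mathcal{S}_{k-1}$ is a union of double helices accumulating on $W^s_\loc(P)\cap\Sigma_P^\inn$ (Cases 2 and 5), resp.\ of fold curves accumulating there (Cases 3 and 4), then $\mathcal{S}_k=R^{-1}(\mathcal{S}_{k-1})$ is again such a union.

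To establish this I would track a single component of $\mathcal{S}_{k-1}$ through $R^{-1}=\Pi_P^{-1}\circ\Psi_{PE}^{-1}\circ\Pi_E^{-1}\circ\Psi_{EP}^{-1}$ exactly along the lines of the proofs of (3) and (4): the affine map $\Psi_{EP}^{-1}$ carries it into $\Sigma_E^\out$ transversely to $E^{cu}$; by the contraction properties of $\Pi_E$ recorded in Lemma \ref{lemma-segment}, its $\Pi_E^{-1}$-preimage is a curve close to $W^s_\loc(E)\cap\Sigma_E^\inn$; the affine map $\Psi_{PE}^{-1}$ sends this into a parabola-type curve in $\Sigma_P^\out$; and finally $\Pi_P^{-1}$ turns each such curve into a helix (Cases 2 and 5, via Lemma \ref{doublehelix}) or a single fold curve (Cases 3 and 4) accumulating on $W^s_\loc(P)$. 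Feeding the output back into the one-step dichotomy then yields the stated structure. In Cases 2 and 5 the point $W^u(E)\cap\Sigma_P^\inn$ meets each freshly created double helix infinitely often as $\alpha\to 0^+$, so every curve $\Hom_{m_1,\ldots,m_k}(E)$ bifurcates into two at each level $\beta=\beta_\star$ corresponding to the tip of $\Hom_m(E)$ with $m<m_1$; in Cases 3 and 4 each new fold curve inherits the fold point of the curve it refines, so the tip points of all $\Hom_{m_1,\ldots,m_k}(E)$ in a given chain coincide, recovering the tip statement of the corollary. The ordering $m_k>\cdots>m_1\ge n_1$ is inherited from the one-step statements, in which the round produced by each pull-back carries a winding number no larger than those already present (strictly smaller in Cases 2 and 5, where the bifurcation levels belong to tips $\Hom_m(E)$ with $m<m_1$); since every sufficiently large winding is realized as $\alpha\to 0^+$, iterating reaches every admissible tuple.

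The step I expect to be the main obstacle is making the reproduction claim uniform over the infinitely many components of $\mathcal{S}_{k-1}$ and over the shrinking scales at which they sit as they accumulate on $W^s_\loc(P)$. One must check that the transverse crossing between $(\Pi_P^{-1}\circ\Psi_{PE}^{-1})(\mathcal{S}_{k-1})$ and the cuspidal curve $(\Psi_{EP}\circ\Pi_E\circ\Psi_{PE})(W^u(P))$ does not degenerate as the components approach $W^s_\loc(P)$, and that the smoothness and the fold structure survive each composition so that the hypotheses of \cite[Theorem 4.7]{Deng} and of Lemma \ref{doublehelix} remain valid at every level of the induction. Once this uniform control is secured, together with the bookkeeping that records which tip each newly created curve inherits, the induction closes and the corollary follows by iterating (3) and (4).
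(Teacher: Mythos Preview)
Your proposal is correct and takes essentially the same approach as the paper: the paper's own justification is simply the one-line remark preceding the corollary that iterating the arguments in Theorem~\ref{thm-E-hom}(3) and (4) yields the result, with no further details given. Your inductive framework via the pull-back sets $\mathcal{S}_k=R^{-1}(\mathcal{S}_{k-1})$ is a faithful and more explicit rendering of that iteration, and your identification of the uniformity issue over accumulating components is a legitimate caveat that the paper does not address.
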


The differences in the argumentation for Cases 2 and 5 compared to 3 and 4 are reflected in the fact that for the latter we leave the region of $E$-homoclinics by fixing $\beta>0$ (Case 3) or $\beta<0$ (Case 4) and then considering $\alpha \to 0$. Figure \ref{multipulses} illustrates this.
In particular there are regions in the parameter space with positive Lebesgue measure where we observe chaos and non-hyperbolic behaviour. This is in contrast to the findings for Case 1 described in \cite{MP}.

\begin{figure}
\begin{center}
\includegraphics[width=\textwidth]{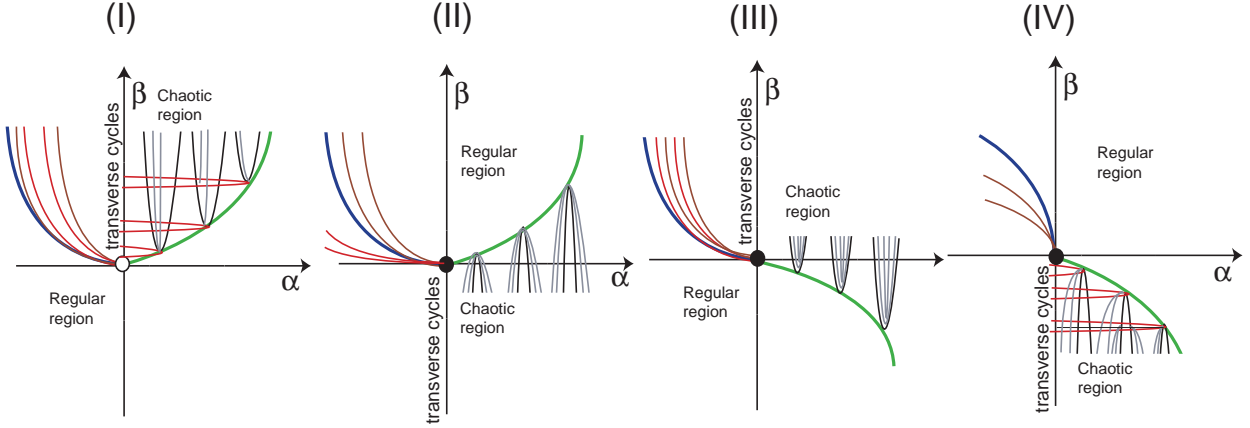}
\end{center}
\caption{\small Plausible bifurcation diagrams for the Cases given in Theorem \ref{main-thm}. The bold point at the origin means that there is chaos in the organizing center. Green line: $\beta=-b_{22}\alpha^{\delta_P}$. Black line: $\Hom_n(E)$. Gray lines: subsidiary $E$-homoclinics. Red line: $\Het_n(PE)$. Blue line: $\Hom(P)$ ($\beta=\tilde{k} \alpha^{\frac{1}{\delta_1}}$). Brown lines: $\Hom_n(P)$.}
\label{multipulses}
\end{figure}

\subsection{Homoclinic cycles to $P$}
In the next result, we show that there are infinitely many curves in the parameter space with homoclinic tangencies to $P$. These play a crucial role in the transition to chaos which characterizes the boundary crisis. There is a first homoclinic tangency, which gives rise to an infinite sequence of homoclinic tangencies accumulating on each other. After the first tangency the sets $W^s(P)$ and $W^u(P)$ accumulate on each other. Once we cross the curve $\Hom(P)$ the invariant manifolds of $P$ form a homoclinic tangle, meaning that the first homoclinic tangency can be seen as the onset of regular chaos. Again the proof of item (1) is based on \cite[Section 3.3]{Champneys2009}.

\begin{theorem}\label{thm-P-hom}
Consider a vector field $f_0$ satisfying $\textbf{(H1)}-\textbf{(H5)}$ and a generic two-parameter family of $C^\infty$ vector fields $f (x,\alpha,\beta)$ unfolding $f_0$, for $\alpha, \beta\neq 0$ sufficiently small. Then, the dynamics satisfy:
\begin{enumerate}
\item For Cases 2--5, there is a curve $\Hom(P)$, defined by $\alpha =\tilde{k}\beta^{\delta_1}$, for $\beta>0$ and $\tilde{k}<0$.
\item For Cases 2, 4 and 5, there are infinitely many curves $\Hom_n(P)$ accumulating (from the chaotic region) on the curve $\Hom(P)$. For Case 3, there are infinitely many curves $\Hom_n(P)$ accumulating (from the regular region) on the curve $\Hom(P)$.
\end{enumerate}
\end{theorem}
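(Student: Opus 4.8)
The plan is to treat a one-pulse $P$-homoclinic tangency as a tangency, inside $\Sigma_P^{\inn}$, between the image of $W^u_\loc(P)$ after a single passage around the cycle and the local stable manifold $W^s_\loc(P)\cap\Sigma_P^{\inn}=\{z=0\}$. I would start from $W^u_\loc(P)\cap\Sigma_P^{\out}=\{\rho=0\}$ and push it forward by $\Psi_{PE}$, obtaining in $\Sigma_E^{\inn}$ the parabola $(y,z)=\big(b_{11}\psi+\beta\xi_2,\ k\psi^2+\beta(1+\nu\psi)\big)$. The arc lying in $\Sigma_E^{\inn(+)}$ enters $V_E$ and, by Proposition \ref{dyn_E_proposition}, is carried by $\Pi_E$ to a cuspidal curve in $\Sigma_E^{\out}$ tangent to $E^{cu}$ with a fold point over the vertex of the parabola; composing with $\Psi_{EP}$ (a rotation by $\pi/2$, with $a_{21}>0$) transports this curve to $\Sigma_P^{\inn}$. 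Using $\Pi_E(y,z)=(z^{\delta_1},yz^{\delta_2})$ to leading order, the $z$-coordinate in $\Sigma_P^{\inn}$ of the resulting curve is, to leading order, $a_{21}\big(k\psi^2+\beta(1+\nu\psi)\big)^{\delta_1}+\alpha$.

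A tangency with $\{z=0\}$ then occurs precisely when the extremum in $\psi$ of this function vanishes. Since $t\mapsto t^{\delta_1}$ is monotone for $t>0$, the extremum sits over the vertex $\psi_\ast=-\beta\nu/(2k)$ of the parabola, where $k\psi^2+\beta(1+\nu\psi)=\beta-\beta^2\nu^2/(4k)$. Setting the corresponding value of the $z$-coordinate to zero yields $\alpha=-a_{21}\big(\beta-\beta^2\nu^2/(4k)\big)^{\delta_1}$, that is, to leading order $\alpha=\tilde{k}\beta^{\delta_1}$ with $\tilde{k}=-a_{21}<0$; the requirement $\alpha<0$ is forced because the image curve carries a non-negative offset over $\alpha$ and must reach down to $\{z=0\}$, and then $\beta>0$ follows since $\beta^{\delta_1}$ must be real and positive. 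Because this leading-order computation is insensitive to the individual signs of $k$ and $b_{22}$, it delivers the single curve $\Hom(P)$ for all of Cases 2--5; a standard transversality argument shows the contact is generically quadratic, establishing item (1).

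For item (2) the strategy is to iterate the return map once more. Writing $\mathcal{W}_0=(\Psi_{EP}\circ\Pi_E\circ\Psi_{PE})(W^u_\loc(P)\cap\Sigma_P^{\out})\subset\Sigma_P^{\inn}$ for the one-pulse image, a point of $\Hom_n(P)$ corresponds to a two-pulse $P$-homoclinic that, during its single intermediate visit to $V_P$, winds $n$ times around $P$; in the return-map picture this is exactly the tangency of $R(\mathcal{W}_0\cap H_n)$ with $\{z=0\}$, where $H_n$ is the strip of Lemma \ref{main_prop} and $R=\Psi_{EP}\circ\Pi_E\circ\Psi_{PE}\circ\Pi_P$. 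I would set up the tangency condition for each $n$ as an equation in $(\alpha,\beta)$ depending on the strip endpoints $\exp(a_n(\psi))$, $\exp(b_n(\psi))$, and then let $n\to\infty$. Since the strips $H_n$ collapse onto $W^s_\loc(P)=\{z=0\}$ in the Hausdorff topology (Remark \ref{boundaries}), the relevant arc of $\mathcal{W}_0$ localizes at $\mathcal{W}_0\cap\{z=0\}$, which on $\Hom(P)$ is the one-pulse tangency point; passing to the limit shows the $n$-dependent tangency conditions converge to the one-pulse condition, so the curves $\Hom_n(P)$ accumulate on $\Hom(P)$. The analogue of this collapse has already been used for the $E$-homoclinics in Theorem \ref{thm-E-hom}.

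The hard part will be pinning down the side of accumulation, i.e.\ showing that $\Hom_n(P)$ approaches $\Hom(P)$ from the chaotic region in Cases 2, 4, 5 and from the regular region in Case 3. This amounts to reading off the sign of the finite-$n$ correction to the one-pulse tangency condition, which is governed by two competing orientations: that of $\widetilde{H}_n=\Pi_P(H_n)$ relative to $W^s_\loc(E)$ (controlled by the sign of $k$) and the orientation behaviour of $\Psi_{PE}$ (controlled by the sign of $b_{22}$, cf.\ Table \ref{notation}). Since the regular side occurs precisely in the single combination $k>0$, $b_{22}<0$, no one-line monotonicity estimate will suffice; rather, I expect to reuse the helix-and-cusp geometry of Lemmas \ref{main_prop} and \ref{helix} to locate the fold point of $R(\mathcal{W}_0\cap H_n)$ and track the sign of its displacement as $(\alpha,\beta)$ crosses $\Hom(P)$, case by case. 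This sign bookkeeping across the four cases, together with matching it to the chaotic region determined in Theorem \ref{main-thm}, is the genuine obstacle; everything else reduces to the local computations in Sections \ref{localdyn_E} and \ref{localdyn_P}.
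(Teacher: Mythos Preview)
Your argument for item~(1) coincides with the paper's: both push $W^u_\loc(P)\cap\Sigma_P^{\out}=\{\rho=0\}$ forward through $\Psi_{PE}$, $\Pi_E$ and $\Psi_{EP}$, set the resulting $z$-coordinate in $\Sigma_P^{\inn}$ equal to zero, impose $\partial_\psi=0$ for the tangency, and solve to leading order to obtain $\alpha=-a_{21}k^\star\beta^{\delta_1}$ with $k^\star>0$.

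For item~(2) the paper takes the dual, \emph{backward} viewpoint. Rather than pushing $\mathcal{W}_0\cap H_n$ forward under $R$ and seeking a tangency with $\{z=0\}$, it pulls $W^s_\loc(P)\cap\Sigma_P^{\inn}=\{z=0\}$ back through one full return: first $(\Psi_{EP}\circ\Pi_E\circ\Psi_{PE})^{-1}$ carries $\{z=0\}$ to a parabola in $\Sigma_P^{\out}$ touching $W^u_\loc(P)$ (exactly when $(\alpha,\beta)\in\Hom(P)$), and then $\Pi_P^{-1}$ turns this parabola into a helix in $\Sigma_P^{\inn}$ accumulating on $W^s_\loc(P)$. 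Two-pulse $P$-homoclinic tangencies are then simply tangencies between $\mathcal{W}_0$ and this helix; since the fold of $\mathcal{W}_0$ sits on $W^s_\loc(P)$ at $\Hom(P)$ and the helix winds infinitely often there, the countably many curves $\Hom_n(P)$ accumulating on $\Hom(P)$ fall out at once. Your forward strip picture and the paper's backward helix picture are $R$-images of one another, so both are correct. The payoff of the paper's choice is that the ``hard part'' you anticipate---the side of accumulation---becomes a one-line observation per case: the helix exists only when the pulled-back parabola lands in $\Sigma_P^{\out(+)}$, and whether that happens for $\alpha\gtrless\alpha_\star$ is read off directly from the orientation and position of the parabola relative to $W^u_\loc(P)=\{\rho=0\}$, which is governed by the signs of $k$ and $b_{22}$. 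No finite-$n$ correction needs to be tracked; the side is decided before one ever looks at a particular $n$.
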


\begin{proof}
\begin{enumerate}
\item The set $W^u(P)\cap \Sigma_P^\out$ is given by $\rho=0$ and $\psi$ is free. This line is mapped by $\Psi_{PE}$ into the set parametrized by
$
(y,z)=(b_{11}\psi+\beta\xi_2, k\psi^2+\beta+\nu\beta\psi),
$
which is mapped by $\Pi_E$ into
$$
(x,y)= ((k\psi^2+\beta+\nu\beta\psi)^{\delta_1}, (b_{11}\psi+\beta\xi_2)(k\psi^2+\beta+\nu\beta\psi)^{\delta_2}) , \qquad \text{where} \qquad k\psi^2+\beta+\nu\beta\psi\geq 0.
$$
The $z$-component of its image under $\Psi_{EP}\circ \Pi_E\circ \Psi_{PE}$ is given by
$$
a_{21}(k\psi^2+\beta+\nu\beta\psi)^{\delta_1} +a_{22}(b_{11}\psi+\beta\xi_2)(k\psi^2+\beta+\nu\beta\psi)^{\delta_2}+\alpha.
$$
In $\Sigma_P^\inn$, the stable manifold $W^s_\loc(P)$ is given by $z=0$, so up to higher order terms, we are looking for solutions of 
\begin{equation}
\label{alpha1}
 \alpha(\beta, \psi)=-a_{21}(k\psi^2+\beta+\nu\beta\psi)^{\delta_1},
 \end{equation}
  which has solutions if and only if  
$\sign(\alpha)=-\sign \left(a_{21}(k\psi^2+\beta+\nu\beta\psi)^{\delta_1} \right)$.  
Moreover, $W^s(P)$ and $W^u(P)$ have a tangency if and only if $\frac{\partial \alpha}{\partial \psi}(\beta,\psi) =0$ which implies that
$$
-a_{21}\delta_1(2k\psi+\nu\beta)(k\psi^2+\beta+\nu\beta\psi)^{\delta_1}=0.
$$
This implies that $\beta=\frac{-2k\psi}{\nu}$ (the other terms cannot be zero).  Replacing it in (\ref{alpha1}), it follows that $\alpha=-a_{21}k^\star \beta^{\delta_1}$, where $k^\star \in \RR^+ $.
\medbreak

\item We start with Case 3. Let $\alpha<0<\beta$. We first recover the curve $\Hom(P)$ geometrically. The set $W_\loc^u(P)$ in $\Sigma_E^\inn$ is of the form $\mathcal{P}_\beta^+$, so it does not intersect $W_\loc^s(E)$. Therefore, by Lemma \ref{helix}, its image under $\Psi_{EP} \circ \Pi_E$ in $\Sigma_P^\inn$ is a curve with a fold point, detached from $W^u_\loc(E)$. For some pair $(\alpha_\star,\beta_\star)\in \Hom(P)$, the fold curve is tangent to $W_\loc^s(P)$. 
The key argument is the following: the pre-image under $\Psi_{EP}\circ \Pi_E\circ\Psi_{PE}$ of $W_\loc^s(P)\subset \Sigma_P^\inn$ is a parabola opened above and tangent to $W_\loc^u(P)$. Its pre-image in $\Sigma_P^\inn$ is a helix accumulating on $W_\loc^s(P)$. That means that for appropriate $\alpha>\alpha_\star$ or $\beta>\beta_\star$ we find tangencies $\Hom_n(P)$ accumulating on $\Hom(P)$. They lie outside the chaotic region depicted in Figure \ref{multipulses}. A given tangency becomes transverse when $\alpha$ decreases, for larger $\alpha$ it disappears.

\begin{figure}
\begin{center}
\includegraphics[height=8cm]{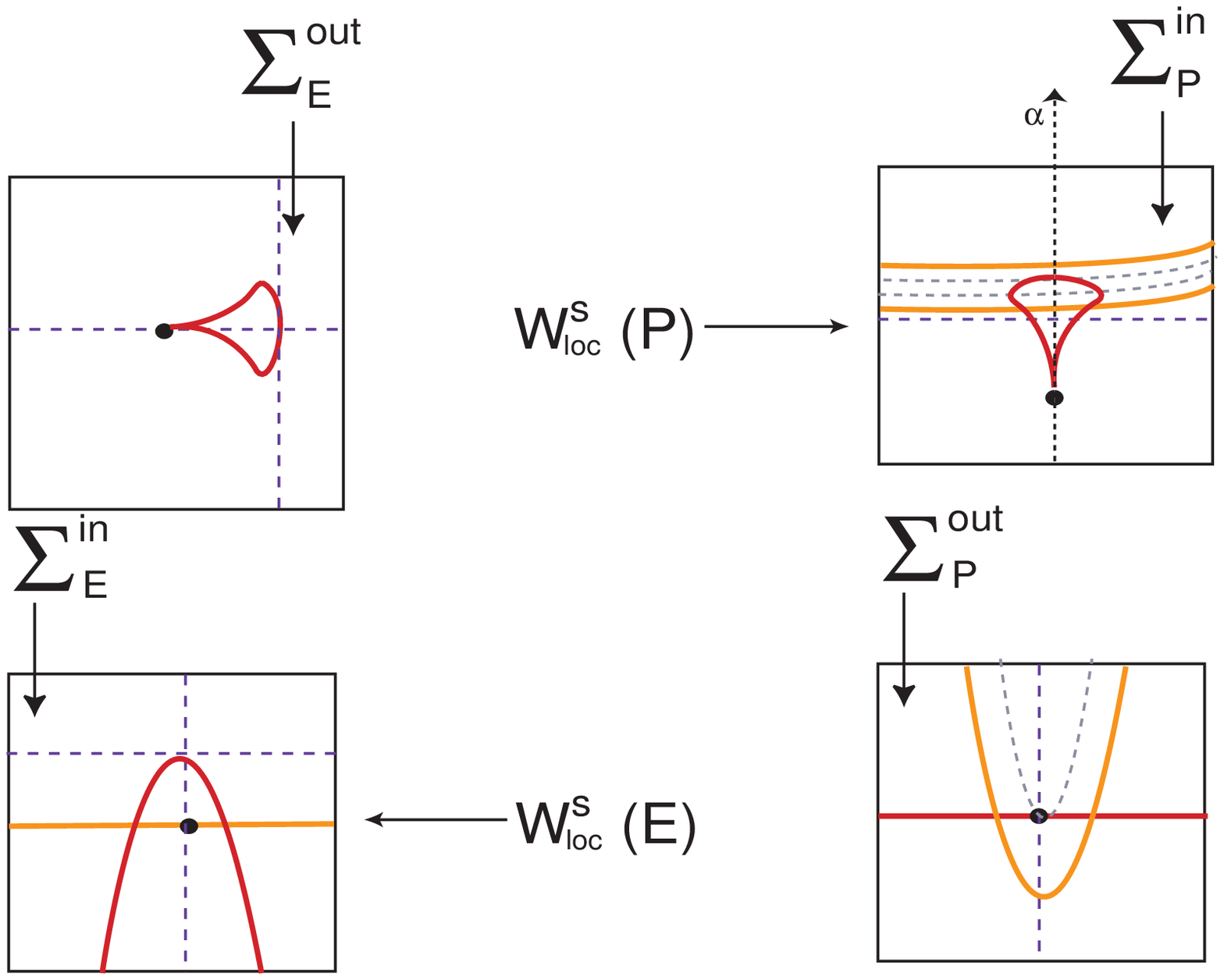}
\end{center}
\caption{\small Illustration of Theorem \ref{thm-P-hom} (2), Case 4: creation of $P$-homoclinic multipulses.}
\label{multipulses-case4}
\end{figure}

The proof for Case 4 is similar and is depicted in Figure \ref{multipulses-case4}. The tangency $\Hom(P)$ occurs for $\alpha_\star<0<\beta_\star$, when the image of $W^u(P)$ in $\Sigma_P^\inn$ is a connected cusp curve tangent to $W_\loc^s(P)$. By the same reasoning as before, the pre-image of $W^s_\loc(P)$ under a full return is a helix accumulating on $W_\loc^s(P)$, so there are countably many tangencies $\Hom_n(P)$ accumulating on $\Hom(P)$. Again these occur for $\alpha>\alpha_\star$, but, in contrast to Case 3, this corresponds to the chaotic region. In Case 5, the arguments are again the same, but it is worth noting that the helix that is the pre-image of $W^s(P)$ in $\Sigma_P^\inn$ under a full return only appears for $\alpha<\alpha_\star$. Otherwise, $W^s(P)$ falls into the lower region of $\Sigma_P^\out$ and thus has no further pre-image. This is similar also in Case 2.
\end{enumerate}
\end{proof}

As in the previous theorem, straightforward iteration of the arguments in (2) yields curves $P$-homoclinic solutions with more than two pulses. The result is a consequence of the Newhouse phenomena and/or H\'enon attractors near the tangency, characterized by the existence of wild sets in the corresponding flow.

\begin{corollary}
Under the hypotheses of Theorem \ref{thm-P-hom}, there are infinitely many curves $\Hom_{n_1,\ldots,n_{k-1}}(P)$ in the parameter space for $k, n_1,\ldots,n_{k-1} \in \NN$. They accumulate on $\Hom(P)$ in the same way as above.
\end{corollary}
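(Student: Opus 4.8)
The plan is to argue by induction on the number of pulses, taking the helix--accumulation mechanism of Theorem~\ref{thm-P-hom}(2) as the inductive engine and keeping the roles of the two invariant manifolds fixed. Throughout, write $G=\Psi_{EP}\circ\Pi_E\circ\Psi_{PE}\colon\Sigma_P^{\out}\to\Sigma_P^{\inn}$ for the map describing a single excursion around the cycle, so that the full return decomposes as $R=G\circ\Pi_P$. A $P$-homoclinic tangency taking $k$ turns around the (remnant of the) cycle is then precisely a tangency in $\Sigma_P^{\inn}$ between the single fold curve $G(W^u_\loc(P)\cap\Sigma_P^{\out})$ and the set $R^{-(k-1)}(W^s_\loc(P))$: following the trajectory forwards, the first excursion $G$ carries $W^u_\loc(P)$ into $\Sigma_P^{\inn}$, and the remaining $k-1$ returns must bring the resulting curve onto $W^s_\loc(P)$. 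The case $k=1$ is $\Hom(P)$ of Theorem~\ref{thm-P-hom}(1), and $k=2$, giving the curves $\Hom_n(P)$, is Theorem~\ref{thm-P-hom}(2).

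First I would establish, by induction on $m=k-1\geq 1$, that $R^{-m}(W^s_\loc(P))$ is a countable union of smooth strands accumulating on $W^s_\loc(P)\cap\Sigma_P^{\inn}$, naturally parametrized by tuples $(n_1,\dots,n_m)\in\NN^m$. The base $m=1$ is exactly the helix produced in the proof of Theorem~\ref{thm-P-hom}(2): $G^{-1}(W^s_\loc(P))$ is a parabola (or cusp curve) in $\Sigma_P^{\out}$, and its pre-image under $\Pi_P$ is a helix accumulating on $W^s_\loc(P)$ by the computation of Lemma~\ref{main_prop}, its strands indexed by the integer $n_1$ counting turns around $P$ in the inserted visit to $V_P$. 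For the step, apply $R^{-1}=\Pi_P^{-1}\circ G^{-1}$ to each strand of $R^{-(m-1)}(W^s_\loc(P))$ lying in the domain of $R$ (that is, in $\Sigma_P^{\inn(+)}$ and genuinely following the cycle): $G^{-1}$ maps such a strand smoothly into $\Sigma_P^{\out}$, and $\Pi_P^{-1}$ turns it, again by Lemma~\ref{main_prop} and Lemma~\ref{helix}, into a fresh (possibly double) helix accumulating on $W^s_\loc(P)$, whose strands carry a new index $n_m$. This refines the parametrization to $(n_1,\dots,n_m)$ and preserves the accumulation on $W^s_\loc(P)$. Tangencies of the fixed fold curve $G(W^u_\loc(P))$ with the strand indexed by $(n_1,\dots,n_{k-1})$ are, by the same transversality computation as in the base case, quadratic and realized along a smooth curve in the $(\alpha,\beta)$-plane; these are the curves $\Hom_{n_1,\dots,n_{k-1}}(P)$. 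Since for fixed $(n_1,\dots,n_{k-2})$ the strands indexed by $(n_1,\dots,n_{k-2},n_{k-1})$ converge to the strand indexed by $(n_1,\dots,n_{k-2})$ as $n_{k-1}\to\infty$, the curves $\Hom_{n_1,\dots,n_{k-1}}(P)$ accumulate on $\Hom_{n_1,\dots,n_{k-2}}(P)$; iterating these nested accumulations down to $m=0$ shows that the whole family accumulates on $\Hom(P)$, exactly as in Theorem~\ref{thm-P-hom}(2).

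The main obstacle will be the bookkeeping of domains and the persistence of the tangency type through the repeated compositions, rather than any new idea. At each pullback only the strands whose $z$-coordinate has the sign forcing the trajectory to re-enter the return map survive (points with $z<0$ leave $\mathcal{U}$ and must be discarded), so one has to check that, for every prescribed tuple with all entries sufficiently large, a surviving strand actually meets the fold curve; this holds because each helix accumulates on $W^s_\loc(P)$ from the side (chaotic in Cases~2, 4, 5, regular in Case~3) identified in Theorem~\ref{thm-P-hom}(2), in accordance with the case-by-case sign analysis. One must also confirm that the contact remains genuinely quadratic, so that each $\Hom_{n_1,\dots,n_{k-1}}(P)$ is a bona fide codimension-one curve and does not degenerate: this follows from the smoothness of the transition maps together with Proposition~\ref{dyn_E_proposition}, which guarantees that the fold points and the limiting tangent directions (along $E^{cu}$) are well defined after each excursion. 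As all these checks are local repetitions of the base-case verification, the iteration is routine and the stated result follows.
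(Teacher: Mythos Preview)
Your proposal is correct and follows exactly the approach the paper indicates: the paper does not give a detailed proof but merely remarks that ``straightforward iteration of the arguments in (2) yields curves $P$-homoclinic solutions with more than two pulses,'' and your inductive argument --- pulling back $W^s_\loc(P)$ under $R^{-1}$ repeatedly, invoking Lemmas~\ref{main_prop} and~\ref{helix} at each step to produce a new helix, and then reading off the tangencies with the fixed fold curve $G(W^u_\loc(P))$ --- is precisely that iteration spelled out. Your discussion of the bookkeeping (sign of $z$, survival of strands, persistence of quadraticity via Proposition~\ref{dyn_E_proposition}) goes beyond what the paper records but is consistent with its geometric setup.
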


After the bifurcating curve $\Hom_k(P)$ the sets $\overline{W^u(P)}$ and $\overline{W^s(P)}$ are contained in a chaotic attractor. For small $\alpha$ after the first homoclinic bifurcation to $P$ there is a region of preturbulence and Newhouse phenomena, that may be attracting or repelling according to the eigenvalues of \textbf{(H2)}. One finds a region of multistability where the attractor coexists with attracting/repelling periodic solutions. They constitute an essential ingredient to the characterization of the boundary crisis, whose  full description is unreachable as we discuss in Section \ref{chaos-dynamics}.

\subsection{Heteroclinic tangencies from $P$ to $E$}
Finally, we give a corresponding result about tangent connections $[P \to E]_t$ taking more and more turns around the heteroclinic cycle. For different cases these occur in different quadrants of the parameter space.
\begin{theorem}\label{thm-PE-het}
Consider a vector field $f_0$ satisfying $\textbf{(H1)}-\textbf{(H5)}$ and a generic two-parameter family of $C^\infty$ vector fields $f (x,\alpha,\beta)$ unfolding $f_0$, for $\alpha, \beta\neq 0$ sufficiently small. Then the following holds:
\begin{enumerate}
\item In Cases 2 and 5 there are infinitely many curves $\Het_{n}(PE)$ whose tip points coincide with those of  $\Hom_{n}(E)$.
\item In Cases 2 and 4 there are infinitely many curves $\Het_{n}(PE)$ intercalating with $\Hom_n(P)$.
\item In Case 3 there are infinitely many curves $\Het_{n}(PE)$ accumulating on the $\alpha$-axis. 
\end{enumerate}
\end{theorem}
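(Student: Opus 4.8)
The plan is to treat a multipulse heteroclinic tangency $\Het_n(PE)$ as a tangency, in a single fixed cross section, between the forward image of $W^u_{\loc}(P)$ after $n$ turns around the (remnant of the) cycle and the locally pulled-back stable manifold $W^s(E)$. The most convenient section is $\Sigma_P^{\inn}$: there the pre-image $(\Pi_P^{-1}\circ\Psi_{PE}^{-1})(W^s(E)\cap\Sigma_E^{\inn(+)})$ is already understood from Lemma \ref{helix} (a double helix in Cases 2 and 5, a single fold curve in Cases 3 and 4, all accumulating on $W^s_{\loc}(P)\cap\Sigma_P^{\inn}$), while the forward image of $W^u_{\loc}(P)$ is, by Proposition \ref{dyn_E_proposition} together with the analysis following Lemma \ref{main_prop}, a cuspidal (or fold) curve whose cusp point corresponds to $W^u(E)$. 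A tangent connection $[P\to E]_t$ winding $n$ times then corresponds exactly to a tangency between these two curves along the $n$-th strand of the helix. First I would record the direct case: the parabola $\Psi_{PE}(W^u_{\loc}(P)\cap\Sigma_P^{\out})$ is tangent to $W^s_{\loc}(E)=\{z=0\}$ precisely on $\beta=0$, recovering the organizing tangency on the $\alpha$-axis. This fixes the base of the counting and the orientation of all subsequent curves.

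For item (1) (Cases 2 and 5) I would argue geometrically that the tip of $\Het_n(PE)$ is forced to coincide with the tip of $\Hom_n(E)$. The fold point of the double helix from Lemma \ref{helix} is the image under $\Pi_P^{-1}\circ\Psi_{PE}^{-1}$ of the vertex of the parabola $\Psi_{PE}^{-1}(W^s(E))\subset\Sigma_P^{\out}$; by the proof of Theorem \ref{thm-E-hom} the tip of $\Hom_n(E)$ is exactly the parameter value at which $W^u(E)$ (the cusp point of the image of $W^u_{\loc}(P)$) lands on this fold point, i.e.\ the double root of (\ref{eq1}), giving $\beta=-b_{22}\alpha^{\delta_P}$ to leading order. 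At that configuration the image curve of $W^u_{\loc}(P)$ meets the double helix tangentially rather than transversely, which is precisely $\Het_n(PE)$. Matching these two degeneracies at the same $(\alpha,\beta)$ then yields coincidence of tip points, and letting $n$ vary produces infinitely many such curves.

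For items (2) and (3) I would exploit the winding of the forward image of $W^u_{\loc}(P)$ directly. In Cases 2 and 4 the image accumulates, as a helix, on $W^s_{\loc}(P)$ while simultaneously sweeping across the pulled-back copies of $W^s(E)$; each successive loop becomes tangent once to $W^s_{\loc}(P)=\{z=0\}$, producing the curves $\Hom_n(P)$ of Theorem \ref{thm-P-hom}, and once to the pre-image of $W^s(E)$, producing $\Het_n(PE)$. Because these two tangencies alternate along the helix as the winding index increases, the curves $\Het_n(PE)$ intercalate with $\Hom_n(P)$, which is item (2). For item (3) (Case 3) the same helix computation shows that the tangency along the $n$-th loop forces $\beta\to 0$ as $n\to\infty$, exactly as the homoclinic analysis forced $\alpha\to 0$ in (\ref{eq2}); hence the curves $\Het_n(PE)$ accumulate on the $\alpha$-axis, where the one-pulse tangency lives.

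The main obstacle I anticipate is the tangency bookkeeping rather than any single estimate: one must verify that the forward image of $W^u_{\loc}(P)$ and the (double) helical pre-image of $W^s(E)$ are genuinely tangent — not merely transverse or disjoint — along the prescribed strand, and that the contact is quadratic so that $\Het_n(PE)$ persists as a codimension-one curve under the unfolding. For item (1) the delicate point is showing that the two degeneracies (cusp-on-fold for $\Hom_n(E)$ and curve-tangent-to-helix for $\Het_n(PE)$) collapse to the same parameter value; this requires comparing the leading-order direction of the cuspidal image from Proposition \ref{dyn_E_proposition} (tangent to $E^{cu}$) with the tangent direction of the double helix at its fold from Lemma \ref{helix}, and checking that their contact orders match. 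The remaining cases then follow by the same scheme, with the signs of $k$ and $b_{22}$ dictating on which side of $\Hom(P)$ or of the $\alpha$-axis the tangencies appear.
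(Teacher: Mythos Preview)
Your overall framework---working in $\Sigma_P^{\inn}$ and detecting $\Het_n(PE)$ as a tangency between the one-return forward image of $W^u_{\loc}(P)$ and a pulled-back copy of $W^s(E)$---is exactly the paper's, and your argument for item~(1) is essentially the paper's own: the cusp point of that forward image \emph{is} $W^u(E)\cap\Sigma_P^{\inn}$, so the tip of $\Hom_n(E)$ (cusp point meeting the fold of the double helix) is simultaneously the birth of $\Het_n(PE)$, and for $\alpha<\alpha_\star$ one recovers two values of $\beta$ on either side of $\beta_\star$ giving the tangency.

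There is, however, a genuine gap in your treatment of item~(2). The forward image of $W^u_{\loc}(P)$ after one return is a single cusp curve (Proposition~\ref{dyn_E_proposition}(2) applied to the parabola in $\Sigma_E^{\inn}$), not a helix; it has no ``successive loops'' that could alternately touch $\{z=0\}$ and the pre-image of $W^s(E)$. Your opening classification is also off here: in Case~4 with $\beta>0$ the parabola $\Psi_{PE}^{-1}(W^s_{\loc}(E))\subset\Sigma_P^{\out}$ has its vertex in $\{\rho<0\}$, so its pre-image under $\Pi_P^{-1}$ consists of helices, not a single fold curve. The intercalation mechanism you are missing is this: in $\Sigma_P^{\out}$ there are \emph{two} nested parabolas---the direct pre-image $\Psi_{PE}^{-1}(W^s_{\loc}(E))$ and the pull-back of $W^s_{\loc}(P)\cap\Sigma_P^{\inn}$ through $(\Psi_{EP}\circ\Pi_E\circ\Psi_{PE})^{-1}$---whose images under $\Pi_P^{-1}$ are intercalated helices in $\Sigma_P^{\inn}$. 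As $\alpha$ increases from $\alpha_\star\in\Hom(P)$ the top of the fixed cusp curve rises from $z=0$ and becomes tangent, in alternation, to strands of the $W^s(E)$-helix (producing $\Het_n(PE)$) and of the $W^s(P)$-helix (producing $\Hom_n(P)$); that is the source of the intercalation. For item~(3) your sketch is in the right direction; the paper makes it precise by fixing $\alpha<0$ and letting $\beta\to 0^+$, so that the pre-image of $W^s_{\loc}(E)$ (a fold curve for $\beta>0$) degenerates toward a helix and touches the cuspidal image of $W^u_{\loc}(P)$ at a sequence $\beta_n\to 0$.
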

\begin{proof}
\begin{enumerate}
 \item We treat Case 5, illustrated in Figure \ref{heteroclinic_tangency}, Case 2 is analogous with $\beta>0$. First recall the geometric arguments for the existence of the curves $\Hom_n(E)$ from Theorem \ref{thm-E-hom}. Then we show that their tip points are also the tip points of curves corresponding to heteroclinic tangencies from $P$ to $E$, turning $n$ times around $P$ in between. So, as in Theorem \ref{thm-E-hom}, let $\alpha>0>\beta$. Then the pre-image of $W_\loc^s(E)$ in $\Sigma_P^\inn$ is a double helix accumulating on $W^s_\loc(P)$. At the tip point $(\alpha_\star, \beta_\star)$ of $\Hom_n(E)$ its fold point coincides with the cusp point $W^u(E)$.
 
 On the other hand, the image of $W_\loc^u(P)$ in $\Sigma_P^\inn$ is a cusp curve attached to $W_\loc^u(E) \cap \Sigma_P^\inn$. So for $\alpha<\alpha_\star$ there are two values $\beta_1>\beta_\star>\beta_2$ corresponding to heteroclinic tangencies. These points define the curve $\Het_n(PE)$.
 \medbreak
 \item We treat Case 4, Case 2 is analogous. For $\beta>0>\alpha$ the set $W^u(P)$ in $\Sigma_E^\inn$ is a parabola opened below that intersects $W_\loc^s(E)$ twice. Therefore, by Proposition \ref{dyn_E_proposition} (2), its image is a connected cusp curve attached to $W^u_\loc(E)$ in $\Sigma_E^\out$, and thus also in $\Sigma_P^\inn$. Consider $(\alpha_\star,\beta_\star) \in \Hom(P)$. Then increasing $\alpha>\alpha_\star$ there are countably many tangencies between the helix that is the pre-image of $W^s(E)$ and the top of the cusp curve that is $W^u(P)$ in $\Sigma_P^\inn$, creating the curves $\Het_n(PE)$. Trajectories take $n \in \NN$ turns around the remnant of the heteroclinic cycle as $\alpha \to \alpha_\star$ from above.
 \begin{figure}[ht]
\begin{center}
\includegraphics[width=16cm]{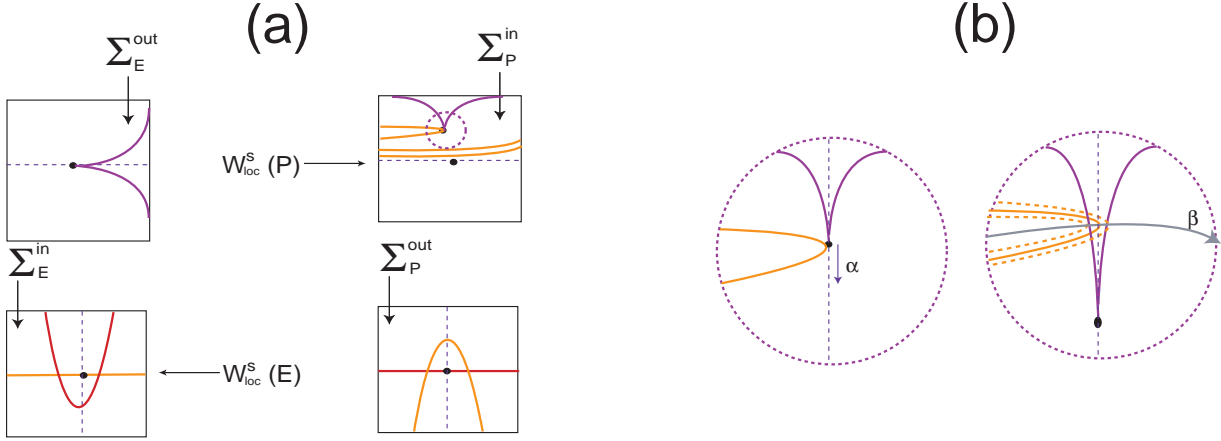}
\end{center}
\caption{\small Illustration of Theorem \ref{thm-PE-het} (1), Case 5: creation of $[P \to E]_t$ from $E$-homoclinic cycles at $(\alpha_\star, \beta_\star)$. (a) Cross sections. (b) Zoom of the global bifurcation: for $\alpha<\alpha_\star$ there are two values $\beta_1>\beta_\star>\beta_2$ corresponding to heteroclinic tangencies. The effect of changing $\beta$ is indicated by the arrow on the horizontal line.}
\label{heteroclinic_tangency}
\end{figure}
 Since the pre-image of $W_\loc^s(P)$ in $\Sigma_P^\out$ is a parabola that lies above the parabola given by $W^s(E)$ and touches $W_\loc^u(P)$, as in Figure \ref{heteroclinic_tangency}, this leads to intercalated helices in $\Sigma_P^\inn$.
 \medbreak
 \item For $\beta=0$ and fixed $\alpha<0$, the pre-image of $W_\loc^u(P)$ under $\Psi_{EP} \circ \Pi_E \circ \Psi_{PE}$ is a cuspidal curve in $\Sigma_P^\inn$ with its cusp point below $W_\loc^s(P)$. The pre-image of $W^s_\loc(E)$ under $\Pi_P$ is a helix accumulating on $W_\loc^s(P)$. For $\beta>0$, this helix becomes a curve with a fold point, touching the cusp infinitely many times when $\beta \to 0$. In particular, this yields a sequence $\{\beta_n\}_{n \in \NN}$, such that we have heteroclinic tangencies for $(\alpha, \beta_n)$. Since this holds for every $\alpha<0$, by continuity we obtain the curves $\Het_n(PE)$.
\end{enumerate} 
\end{proof}

\section{The transition to chaotic dynamics}
\label{chaos-dynamics}

{In this section we give an overview of the information we have acquired in the previous sections on the dynamics for the different parameter configurations of an EP1t-cycle. We also discuss the results and possible further developments.

\subsection{On the organizing center} 
As stated in Theorem \ref{main-thm}, in three of the eight situations arising from the choice of sign for the parameters $a_{21}, k$ and $b_{22}$, there is a suspended horseshoe accumulating on $\Gamma$, in the organizing center $\alpha=\beta=0$.  We comment on the other two cases first.

In Case 1, where we have $a_{21}<0$ and both $k,b_{22} \gtrless 0$, the singular cycle is isolated due to the loss of trajectories along the connection $[E \to P]$, so there is no chaos in the organizing center. This situation was noted and studied in more detail in \cite{MP}, where it also becomes clear that for $a_{21}<0$ there are open regions of $(\alpha, \beta)$ near $(0,0)$ for which $E$- and $P$-homoclinic cycles coexist; the same does not happen for Cases 2--5, where $a_{21}>0$. In Case 2 the cycle is also isolated, this time all trajectories leave its neighbourhood in the transition $[P \to E]_t$. This is because trajectories passing around $P$ subsequently hit $\Sigma_E^{\inn (-)}$, the lower part of the incoming cross section near $E$.
The other Cases 3--5 display shift dynamics in the organizing center -- with different properties with respect to the conjugacy to a full shift, depending on the transition from $P$ to $E$: when $W^u(P)$ is folded inward ($k>0$, Cases 3 and 5), chaos may exist regardless of $b_{22}\gtrless 0$. For an outward fold ($k<0$, Case 4), however, chaos is incompatible with an inclination flip ($b_{22}<0$). The differences in these types of shift dynamics of the organizing center stem from the shape of the inner part of $W^u(P)$, which is connected or not. Our Case 4 has dynamical properties as those described by Tresser \cite[Section V]{Tresser}.

We also proved the existence of a sequence of hyperbolic invariant sets accumulating on the cycle for which the dynamics of the first return map are topologically conjugate to a full shift on finitely many symbols. The closure of their union is not uniformly hyperbolic. Any subset containing a finite number of these horseshoes is uniformly hyperbolic, persisting for small smooth perturbations.  When $(\alpha, \beta)\neq (0,0)$, many of the heteroclinic circuits will be removed in saddle-node type bifurcations (combined with period doubling). 
\medbreak
The dynamics of the (regular) horseshoe is chaotic and hyperbolic on the product of two Cantor sets and its complement is dense in the neighbourhood of $\Gamma$, so that most nearby initial conditions escape after a finite number of iterations. A natural question is how much richer is the dynamics of $\Lambda$ in the case that it is a cusp horseshoe?  Theorems \ref{main-thm} and \ref{thm-hyp} combined raise another question: how does the arrangement of the invariant manifolds of $\Lambda$ vary with the $W^{cu}(E)$ and $W^s(P)$?  The results suggest that the topological organization of the phase space near $\mathcal{U}$ is governed by $W^{cu}(E)$ and $W^s(P)$. We also expect these manifolds to organize the structure of the basin boundaries of the bifurcating sinks/sources of Theorems \ref{thm-E-hom}, \ref{thm-P-hom} and \ref{thm-PE-het}.

\medbreak
\subsection{Perturbing the vector field} 
Theorems \ref{thm-E-hom}, \ref{thm-P-hom} and \ref{thm-PE-het} characterize the boundary crisis associated to the transitive set $\Lambda$ (introduced in Section \ref{organizing_center}) and extend the results in \cite{Champneys2009, MP}. In this regard, our main contribution is the description of multipulse homoclinics to $E$ and $P$, as well as heteroclinic tangencies from $P$ to $E$.  The $E$-homoclinic cycles are distinguished by the number of revolutions around $P$. Curves marking the creation/disappearance of $E$- and $P$-homoclinics occur for opposite signs of $\alpha$. Numerically, the multipulses characterize the non-hyperbolicity of the region, and they have the same effect as Cocoon bifurcations: accumulation of parameter values for which heteroclinic tangencies between nodes coexist with solutions of long periods.

We elaborate primarily on the unfolding dynamics for Case 2, the other cases can be inferred from this. Fix $\beta>0$ and decrease $\alpha$. Once we cross the curve $\beta=-b_{22}\alpha^{\delta_P}$, we find homoclinic cycles to $E$ that break up into periodic solutions which should persist until $\alpha=0$. On the $\beta$-axis we find a singular cycle with the same properties as in \cite{PR}. For small $\alpha<0$, the dynamical properties of this transitive set should persist until multipulses to $P$ appear. This region is characterized by instability until the last tangency at $\Hom(P)$. After this there are no recurrent dynamics. For $\beta<0$ the dynamics are trivial as suggested in Figure \ref{multipulses}.

Figure \ref{multipulses-case4} yields additional information on the transition to chaos for Case 4: the $P$-homoclinic multipulses from Theorem \ref{thm-P-hom} become transverse for fixed $\beta>0$ and increasing $\alpha$. They then persist, until for some $\alpha>0$ they are annihilated in a heteroclinic connection $[E \to P]$. Analogously, the heteroclinic tangencies $[P \to E]_t$ from Theorem \ref{thm-PE-het} become transverse for increasing $\alpha$ and culminate in the $E$-homoclinics given through $\Hom_{n_1,\ldots,n_k}(E)$.

In Cases 3 and 4, when the cusp point coincides with a homo- or heteroclinic connection, there exists a subset in $ \Sigma_P^{\inn}$ where the map $R$ is topologically semi-conjugate to a Bernoulli shift, a kind of \emph{singular horseshoe} considered in \cite{LP}. In these cases, the semi-conjugacy fails to be a conjugacy  because of the pinching of the stable directions caused by the lack of injectivity of $R$ on a given fiber.

\subsection{Other dynamics}
We close with a remark on \emph{switching dynamics} \cite{ALR, CastroLohse2015}: in each of the Cases 2--5 there are parameter regions where there exists a heteroclinic $EP1$-cycle, \emph{i.e.}\ it is of codimension one instead of two, because the intersection $W^u(P) \cap W^s(E)$ is transverse. This occurs in
\begin{itemize}
 \item Cases 2 and 4 when $\beta>0$ and $\alpha=0$,
 \item Cases 3 and 5 when $\beta<0$ and $\alpha=0$.
\end{itemize}
Then, there are two connections from $P$ to $E$, say ${\gamma}_0$ and $\tilde{\gamma}_0$, and in fact we have a heteroclinic network (with three connections).
For $\beta=0$ it collapses into the singular cycle $\Gamma$. By the same arguments as in \cite{ALR} (one saddle is a rotating node) it is possible to conclude that there is infinite switching near this network, meaning that all possible sequences of following $\gamma_0$ and $\tilde{\gamma}_0$ are shadowed by real trajectories in any neighbourhood of the network. Roughly speaking, this happens because of the following: small rectangles in $\Sigma_E^\inn$, centered around the connections, are mapped to vertical strips touching $W^s_\loc(P)$ in $\Sigma_P^\inn$, and thus, to the region inside a double helix in $\Sigma_P^\out$, accumulating on $W^u_\loc(P)$. Each such region contains smaller rectangles contained in both of the original ones and the process may be iterated.
}

\section{Concluding Remarks}\label{conclusion}

We have investigated the dynamics near a singular $EP1t$-cycle $\Gamma$, \emph{i.e.}\ a heteroclinic cycle with a quadratic tangency between $W^u(P)$ and $W^s(E)$, as the organizing center of a two-parameter bifurcation scenario.  Throughout our analysis, we have  focused on the case where $E$ has real non-resonant eigenvalues and $P$ has positive Floquet multipliers. 
Without any additional property (reversibility, free-divergence) of the initial vector field, we studied global bifurcations of homoclinics and associated periodic solutions occuring in the process of the dissolution of shift dynamics. Moreover, via a qualitative analysis, we partially described the boundary crisis associated to a codimension two singular cycle.

We characterized the homoclinic bifurcations of periodic solutions that form part of a larger topologically transitive invariant set. We compared the bifurcation scenario with others in which regular and singular horseshoes break up. These are triggered by homoclinic tangencies or saddle-node bifurcations of periodic solutions. The transition to chaos is complicated but, with this paper, assuming some non-restrictive hypotheses, we were able to give a qualitative description of what is going on and we partially described the mechanism leading to destruction of a snaking region -- see the open problems \textbf{(Q1)--(Q3)} of Knobloch \cite{Knobloch_open}. When we pass the first bifurcation line, sudden changes in the non-wandering set in $\mathcal{U}$, a small neighbourhood of the cycle, occur.

This kind of singular cycle, although exhibiting complicated behaviour, is sufficiently simple to be treated analytically. In contrast to \cite{ALR, LR2015}, in which the authors find \emph{instant chaos}, the boundary crisis near $\Gamma$ is characterized by an accumulation of homoclinic cycles and heteroclinic tangencies which give rise to periodic solutions making more and more turns around the original cycle. These solutions should accumulate on the singular horseshoe described by \cite{MP, PR} and others. Besides these global bifurcations, there might exist other global and local bifurcations which cannot be detected by our analysis. Assuming some conditions on the dissipativeness of the first return map, it would be interesting to specify more properties about the existence of several attractors in the unfolding of the cycle and the mechanisms through which their basins of attraction interact and change their size/shape.  The construction of explicit polynomial vector fields whose flow satisfies \textbf{(H1)--(H5)} is an open problem.  
We defer these tasks to future work.

\section*{Acknowledgements} 

We thank the anonymous referees for the careful reading of the manuscript and the useful suggestions that helped to improve the work.
 CMUP is supported by the European Regional Development Fund through the programme COMPETE
and by the Portuguese Government through the Funda\c{c}\~ao para a Ci\^encia e a Tecnologia (FCT) under the project PEst-C/MAT/UI0144/2011. Support through the FCT grants SFRH/BPD/84709/2012 (for AR) and Incentivo/MAT/UI0144/2014 (for AL) is gratefully acknowledged. Part of this work has been written during AR stay in Nizhny Novgorod University supported by the grant RNF 14-41-00044.


\end{document}